\def\NN{{\mathbb N}}
\def\RR{{\mathbb R}}
\def\SSphere{{\mathbb S}}
\newcounter{marnote}
\numberwithin{equation}{section}
\begin{document}
\newtheorem{thm}{Theorem}[section]
\newtheorem{Def}[thm]{Definition}
\newtheorem{lem}[thm]{Lemma}
\newtheorem{rem}[thm]{Remark}
\newtheorem{question}[thm]{Question}
\newtheorem{prop}[thm]{Proposition}
\newtheorem{cor}[thm]{Corollary}
\newtheorem{example}[thm]{Example}

\title[A Serrin-type  over-determined problem]{A Serrin-type over-determined problem for Hessian equations in the exterior domain}

\author{Bo Wang}
 \address{
School of Mathematics and Statistics, Beijing Institute of Technology, Beijing 100081, China.}
 \email{wangbo89630@bit.edu.cn.}

\author{ Zhizhang Wang}
 \address{
School of Mathematical Sciences, Fudan University, Shanghai, China.}
 \email{zzwang@fudan.edu.cn.}

\date{}

\thanks{The first author is supported by NSFC Grants No.12271028, BNSF Grants No.1222017 and the Fundamental
Research Funds for the Central Universities. The second author is supported by NSFC Grants No.12141105. }

\maketitle

\begin{abstract}
In this paper,  we consider the Hessian equations in some exterior domain with prescribed asymptotic behavior at infinity and Dirichlet-Neumann conditions on its interior boundary. We obtain that there exists a unique bounded domain such that the over-determined problem admits a unique strictly convex solution.
\end{abstract}

\setcounter{section}{0}

\section{Introduction}

 Suppose that $\Omega$ is a smooth, bounded, simply connected, open set in $\mathbb{R}^d$ ($d\geq2$).  Throughout this paper, we always use $\nu$ to denote the unit outer normal of $\partial \Omega$, which is the boundary of $\Omega$. A celebrated theorem of Serrin \cite{serrin1971} states that if $u\in C^{2}(\bar{\Omega})$ satisfies the Poisson equation
\begin{equation*}
\Delta u=d~~~~\mbox{in }\Omega
\end{equation*}
with Dirichlet-Neumann boundary conditions: $u=0$ and $\partial u/\partial \nu=1$ on $\partial\Omega$, then up to a translation, $\Omega$ is a unit ball and $u(x)=(|x|^{2}-1)/2$. Serrin's proof is based on the well-known moving planes method.  After that, Weinberger \cite{w1971} gave another proof by using the integral identities.

Brandolini-Nitsch-Salani-Trombetti \cite{bnst2008a} gave an alternative proof of Serrin's theorem. Meanwhile, they also applied their approach to deal with the over-determined problem for the Hessian equations. More precisely, for $k=1$, $\cdots$, $d$, they proved that if $u\in C^{2}(\bar{\Omega})$ satisfies
\begin{equation*}
\sigma_{k}(\lambda(D^{2}u))=\binom{d}{k}\text{ in } \Omega
\end{equation*}
with Dirichlet-Neumann boundary conditions: $u=0$ and $\partial u/\partial \nu=1$ on $\partial\Omega$, then up to a translation, $\Omega$ is a unit ball and $u(x)=(|x|^{2}-1)/2$. Here $\binom{d}{k}$  denotes the combination number, $\lambda(D^{2}u)=(\lambda_{1},\cdots,\lambda_{d})$ denotes the eigenvalue of $D^{2}u$ and
\begin{equation*}
\sigma_{k}(\lambda(D^{2}u)):=\sum\limits_{1\leq i_{1}<\cdots<i_{k}\leq d}\lambda_{i_{1}}\cdots\lambda_{i_{k}}
\end{equation*}
denotes the $k$-th elementary symmetric function of $\lambda(D^{2}u)$. In particular, if $k=1,d$, this is the Laplace operator and the Monge-Amp\`{e}re operator, respectively.  Later on, in \cite{bnst2008b,bnst2008c}, they further studied the stability of the corresponding problem for  $k=1,d$. The over-determined problem has attracted a lot of attention. Please see \cite{d2023,dz2023,jinxiong,ps1989,qx2017,reichel1995,reichel1996,hs2012,wangbao2014,wangbao2015,wgs1994} and the references therein for more results.

In this paper, we consider a natural question: can we extend the above results to the exterior domain $\mathbb{R}^d\texttt{\symbol{'134}}\bar{\Omega}$ ($d\geq 3$)? More precisely, for $1\leq k\leq d$, we consider the following exterior over-determined problem for the $k$-Hessian equations:
\begin{equation}\label{baozhang3}
\sigma_{k}(\lambda(D^{2}u))=\binom{d}{k}~~~~\text{in } \mathbb{R}^d\setminus \bar{\Omega},
\end{equation}
\begin{equation}\label{baozhang2}
u=0,~~\partial u/\partial \nu=1~~~~\mbox{on }\partial\Omega,
\end{equation}
\begin{equation}\label{ab}
u(x)-\left(\frac{1}{2}x\cdot Ax+b\cdot x+c\right)\rightarrow0~~~~\text{as } |x|\rightarrow \infty,
\end{equation}
where $A$ is a $d\times d$ real symmetric positive definite matrix, $b\in\RR^{d}$, $c\in\RR$ and $``\cdot"$ denotes the inner product of $\mathbb{R}^d$.
\begin{Def}
For $1\leq k\leq d$, let $\mathcal{A}_k$ denote the set of all $d\times d$ real symmetric positive definite matrix $A$ satisfying $\sigma_{k}(\lambda(A))=\binom{d}{k}$.  
\end{Def}
For the Monge-Amp\`{e}re equation, i.e., $k=d$ in (\ref{baozhang3}):
\begin{equation}\label{ma}
\det(D^{2}u)=1~~~~\text{in } \mathbb{R}^d\setminus \bar{\Omega},
\end{equation}
Caffarelli-Li \cite{cl2003} proved that, if $u$ is a convex viscosity solution of (\ref{ma}), then $u$ satisfies the asymptotic behavior (\ref{ab}) at infinity for some $A\in \mathcal{A}_{d}$, $b\in\RR^{d}$ and $c\in\RR$. Moreover, they obtained the decay rate of $u$ approaching the quadratic polynomial at infinity:
\begin{equation}\label{fd4}
\limsup\limits_{|x|\rightarrow\infty}|x|^{d-2}\left|u(x)-\left(\frac{1}{2}x\cdot Ax+b\cdot x+c\right)\right|<\infty.
\end{equation}
With this prescribed asymptotic behavior, they also proved that given any $A\in\mathcal{A}_{d}$, $b\in\RR^{d}$ and $\varphi\in C^{2}(\partial\Omega)$, there exists a constant $c^{*}$, depending on $d$, $\Omega$, $A$, $b$ and $\varphi$, such that the exterior Dirichlet problem: (\ref{ma}), (\ref{ab}) with $c>c^{*}$, and $u=\varphi$ on $\partial\Omega$ admits a unique convex viscosity solution $u\in C^{\infty}(\RR^{d}\setminus\bar{\Omega})\cap C^{0}(\RR^{d}\setminus\Omega)$. After that, Li-Lu \cite{lilu2018} proved that there exists a sharp constant $c_{*}$ such that the existence result holds for $c\geq c_{*}$ while the non-existence result holds for $c<c_{*}$.  

For the Poisson equation, i.e., $k=1$ in (\ref{baozhang3}):
\begin{equation}\label{la}
\Delta u=d~~~~\text{in } \mathbb{R}^d\setminus \bar{\Omega},
\end{equation}
by the classical theorem of B\^{o}cher \cite{bocher}, the asymptotic behavior (\ref{ab}) implies (\ref{fd4}). For the $k$-Hessian equations (\ref{baozhang3}) with $2\leq k\leq d-1$, Bao-Wang \cite{BaoW} proved that the asymptotic behavior (\ref{ab}) implies (\ref{fd4}). In \cite{baolili}, Bao-Li-Li  obtained the existence result of the corresponding exterior Dirichlet problem. Furthermore, Li-Li \cite{lili2018} extended the existence result to the Hessian quotient equations. For more results on the exterior problem of nonlinear elliptic equations, we refer to \cite{AB,baocao,baoli,baolili,baolizhang0,baolizhang,baoxiongzhou,cl2003, dbw2024,liliyuan2020,lilu2018,li2019,reichel1997} and the references therein.

\subsection{Main results}

In this paper, we will investigate the uniqueness and existence of the domain $\Omega$ such that the problem (\ref{baozhang3})-(\ref{ab}) admits a unique strictly convex solution. Note that if $u\in C^{2}(\RR^{d}\setminus\Omega)$ is a strictly convex solution of (\ref{baozhang3}) and (\ref{ab}) for some $A$, $b$, $c$, then $A$ must belong to $\mathcal{A}_k$, see \cite{BaoW}. So we will always assume that $A\in\mathcal{A}_k$ throughout this paper.

For the case $A=I\in\mathcal{A}_k$, where $I$ denotes the $d\times d$ identity matrix, we can give a complete characterization of such domain.
\begin{thm}\label{cases:80}
Let $d\geq 3$ and $1\leq k\leq d$. Given $A=I\in\mathcal{A}_k$ and $b\in\RR^{d}$, let $c_{*}$ be some constant determined by $k,d,b$. Then there exists a unique smooth, bounded domain $\Omega\subset\mathbb{R}^d$ such that the problem \eqref{baozhang3}-\eqref{ab} with $c<c_{*}$   admits a unique strictly convex solution $u\in C^{\infty}(\RR^{d}\setminus\Omega)$. Moreover, $\Omega$ is a ball centered at $-b$ with radius $r_{0}$ depending on $k$, $d$, $b$, $c$, and $u$ is radially symmetric. Additionally, the problem \eqref{baozhang3}-\eqref{ab} with $c\geq c_{*}$ admits no strictly convex solution for any smooth bounded domain.
\end{thm}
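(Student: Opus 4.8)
The plan is to classify the radially symmetric solutions explicitly and then to show that every strictly convex solution must be radial; the two facts combined pin down the unique domain, the unique solution and the threshold $c_{*}$. First I would reduce to $b=0$: replacing $u(x)$ by $u(x-b)$ translates $\Omega$ and turns the limiting polynomial in \eqref{ab} into $\tfrac12|x|^{2}+\tilde c$ with $\tilde c=c-\tfrac12|b|^{2}$, while leaving \eqref{baozhang3} and \eqref{baozhang2} unchanged, so it suffices to locate the threshold for $b=0$ and then add $\tfrac12|b|^{2}$ to it.

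For $u=u(r)$, $r=|x|$, the eigenvalues of $D^{2}u$ are $u''$ (simple) and $u'/r$ (multiplicity $d-1$), so \eqref{baozhang3} becomes
\[ \binom{d-1}{k}\Big(\tfrac{u'}{r}\Big)^{k}+\binom{d-1}{k-1}u''\Big(\tfrac{u'}{r}\Big)^{k-1}=\binom{d}{k}, \]
which rearranges into $\tfrac{d}{dr}\big(r^{d-k}(u')^{k}\big)=\tfrac{d}{dr}(r^{d})$, hence $u'(r)=\big(r^{k}+Cr^{k-d}\big)^{1/k}$. The Neumann condition forces $C=C(r_{0})=(1-r_{0}^{k})r_{0}^{d-k}$ and the Dirichlet condition forces $u(r)=\int_{r_{0}}^{r}u'(s)\,ds$. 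Because $d\ge3$ gives $u'(s)-s=O(s^{1-d})$ at infinity, \eqref{ab} then holds automatically with $A=I$, $b=0$ and $c=c(r_{0}):=\int_{r_{0}}^{\infty}(u'(s)-s)\,ds-\tfrac12 r_{0}^{2}$, and a sign analysis of $u''$ shows $u$ is strictly convex on $\{|x|\ge r_{0}\}$ exactly when $r_{0}>r_{*}:=\big((d-k)/d\big)^{1/k}$ (with $r_{*}=0$ when $k=d$). Differentiating $c(r_{0})$ and using $u'(r_{0})=1$, the boundary terms cancel to $c'(r_{0})=-1+\int_{r_{0}}^{\infty}\partial_{r_{0}}u'\,ds$; since $C'(r_{0})=r_{0}^{d-k-1}\big((d-k)-dr_{0}^{k}\big)<0$ for $r_{0}>r_{*}$ and $\partial_{r_{0}}u'$ has the same sign, the integral is negative, so $c'(r_{0})<-1$. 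Hence $r_{0}\mapsto c(r_{0})$ is a decreasing homeomorphism of $(r_{*},\infty)$ onto $(-\infty,c_{*})$, where $c_{*}:=\lim_{r_{0}\to r_{*}^{+}}c(r_{0})=c(r_{*})$ is finite (again because $d\ge3$). Therefore: for $c<c_{*}$ there is a unique radius $r_{0}(c)$ such that the ball $B_{r_{0}(c)}$ carries a radial strictly convex solution (and it is the only solution on that ball, by uniqueness for the ODE Cauchy problem at $r=r_{0}$), while for $c\ge c_{*}$ no ball carries one.

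The remaining --- and main --- task is the rigidity statement: every strictly convex $u$ solving \eqref{baozhang3}-\eqref{ab} on an arbitrary smooth bounded $\Omega$ is radially symmetric about $-b$, with $\Omega$ then forced to be a ball centered at $-b$; this reduces everything to the preceding paragraph, and in particular upgrades ``no radial solution on a ball'' to ``no solution at all'' when $c\ge c_{*}$. I would establish it by the method of moving planes adapted to the exterior domain. The a priori decay \eqref{fd4} needed to start and control the argument holds for all $1\le k\le d$ (Caffarelli-Li \cite{cl2003} for $k=d$, B\^{o}cher \cite{bocher} for $k=1$, Bao-Wang \cite{BaoW} for $2\le k\le d-1$). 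For a fixed unit vector $e$ one compares $u$ with its reflection $u_{\lambda}(x)=u(x^{\lambda})$ across $\{x\cdot e=\lambda\}$: both are strictly convex solutions, so $w_{\lambda}=u_{\lambda}-u$ satisfies $a^{ij}(w_{\lambda})_{ij}=0$ with $a^{ij}=\int_{0}^{1}\sigma_{k}^{ij}\big((1-t)D^{2}u+tD^{2}u_{\lambda}\big)\,dt$, which is uniformly elliptic on compact sets (the $k$-cone is convex, so the interpolating Hessians stay positive definite) and tends to $\binom{d-1}{k-1}$ times the identity at infinity. Starting the planes from $\lambda=+\infty$ is legitimate because the limit polynomial $\tfrac12|x|^{2}+\tilde c$ is radial and the remainder is $O(|x|^{2-d})$; sliding $\lambda$ inward and invoking the Hopf lemma in $\RR^{d}\setminus\bar\Omega$ together with Serrin's boundary-point lemma \cite{serrin1971} at $\partial\Omega$ --- where $u=0$ and $\partial u/\partial\nu=1$ force $w_{\lambda}$ and its gradient to vanish at the critical boundary point --- yields symmetry of $u$ and of $\Omega$ in the direction $e$, with the critical plane through $-b$; doing this for all $e$ makes $\Omega=B_{r_{0}}(-b)$ and $u$ radial. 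The hard part is precisely this step: running moving planes in the unbounded region, where $w_{\lambda}$ is not bounded (reflection across $\{x\cdot e=\lambda\}$ with $\lambda\ne0$ does not preserve $|x|$), so one must work with one-sided bounds, a Phragm\'en-Lindel\"of argument near the moving hyperplane, and possibly a Kelvin transform reducing to a bounded-domain problem, in the spirit of the exterior overdetermined theory (cf.\ \cite{reichel1996,reichel1997}); the fully nonlinear comparison principle presents no difficulty since every solution in play is strictly convex, hence $k$-admissible.
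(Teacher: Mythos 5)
Your classification of the radial solutions on balls is essentially identical to the paper's: the same ODE reduction $\frac{d}{dr}\bigl(r^{d-k}(u')^{k}\bigr)=\frac{d}{dr}(r^{d})$, the same formula $C(r_{0})=r_{0}^{d-k}-r_{0}^{d}$, the same strict-convexity threshold $r_{*}=\bigl((d-k)/d\bigr)^{1/k}$, and the same monotonicity analysis of $c(r_{0})$ giving the range $(-\infty,c_{*})$. So the existence/non-existence half of the theorem, restricted to balls, is fine and matches Section 6 of the paper.

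The genuine issue is in the rigidity step, and it is where the two arguments diverge completely. You propose moving planes, and you candidly flag the difficulties yourself (unboundedness of $w_{\lambda}$ since reflection across $\{x\cdot e=\lambda\}$ with $\lambda\neq 0$ does not preserve $|x|$; the need for a Phragm\'en--Lindel\"of estimate near the moving hyperplane; ``possibly a Kelvin transform''). These are not minor details you can leave implicit: for fully nonlinear $k$-Hessian equations in an exterior domain with an unknown inner boundary, the comparison at infinity and the corner lemma at the critical configuration both require substantial new work that you do not supply, and the references you cite (Reichel) handle semilinear problems under a quite different structure. As written, this step is a sketch of a program rather than a proof, so there is a real gap.

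The paper sidesteps all of this with a much shorter and cleaner device that your proposal misses entirely: the Legendre transform turns the overdetermined \emph{free-boundary} problem in $\mathbb{R}^d\setminus\bar\Omega$ into a \emph{fixed-boundary} Robin problem \eqref{fd5}--\eqref{fd7} on $\mathbb{R}^d\setminus\bar B_{1}$. Lemma~\ref{miyun} shows that whenever $(\Omega,u)$ solves \eqref{baozhang3}--\eqref{ab}, the conditions $u=0$ and $\partial u/\partial\nu=1$ on $\partial\Omega$ force $Du(\mathbb{R}^d\setminus\bar\Omega)=\mathbb{R}^d\setminus\bar B_{1}$ and convert the Dirichlet--Neumann data into the Robin condition $u^{*}=\partial u^{*}/\partial n$ on $\partial B_{1}$. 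Thus if $(\Omega_{1},u_{1})$ and $(\Omega_{2},u_{2})$ both solve the original problem, their transforms $u_{1}^{*}$ and $u_{2}^{*}$ solve the \emph{same} fixed-domain problem; the difference $w=u_{1}^{*}-u_{2}^{*}$ satisfies a linear uniformly elliptic equation, decays to $0$ at infinity, and $w=w_{n}$ on $\partial B_{1}$ is incompatible with a strict interior extremum by the Hopf lemma, so $w\equiv 0$, hence $u_{1}\equiv u_{2}$ and $\Omega_{1}=\Omega_{2}$. Combined with the explicit radial solution this immediately yields both the ball-rigidity and the non-existence for $c\geq c_{*}$. Compared to the moving-plane programme, this buys the whole rigidity statement for free from the maximum principle, with no symmetry argument at all, and is the decisive idea of the proof --- so you should replace your unfinished moving-plane sketch with this duality step.
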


\begin{rem}
The explicit expression of  $c_{*}$ can be given by 
\begin{equation*}
c_*=\left(\frac{d-k}{d}\right)^{2/k}\left[-\frac{1}{2}+\int^{\infty}_1\tau\left(\left(1+\frac{k}{d-k}\tau^{-d}\right)^{1/k}-1\right)d\tau\right]+\frac{1}{2}|b|^{2}
\end{equation*}
for $1\leq k\leq d-1$ and  $$c_*=\int_{0}^{\infty}\tau((1+\tau^{-d})^{1/d}-1)d\tau+\frac{1}{2}|b|^{2}~~~~~~~~\mbox{                for $k=d$}.$$ 
\end{rem}

Compared with the results obtained by Serrin \cite{serrin1971} and Brandolini-Nitsch-Salani-Trombetti \cite{bnst2008a}, for the case $A\in\mathcal{A}_k$ and $A\neq I$, one may expect the non-existence of the domain such that the problem \eqref{baozhang3}-\eqref{ab} admits a unique strictly convex solution. However, surprisingly, we can construct some non-symmetric domain by performing Legrendre transform and solving the corresponding dual problem. This phenomenon is quite different from the Serrin-type over-determined problem in the bounded domain. This is the main novelty of this paper.

Denote $\lambda_{\min}(A)$ and $\lambda_{\max}(A)$ to be the minimum eigenvalue and maximum eigenvalue of $A$, respectively. Let 
$$\bar{c}=\bar{c}(A,b):= \frac{|b|^{2}\lambda_{\min}(A)-\lambda_{\max}(A)}{2\lambda_{\min}(A)\lambda_{\max}(A)}.$$ Then we have the following theorems.

\begin{thm}\label{cases:10}
Let $d\geq 3$ and $2\leq k\leq d$. Given $(A,b,c)\in\mathcal{A}_{k}\times \RR^{d}\times(-\infty,\bar{c})$, there exists a unique smooth, bounded domain $\Omega\subset\mathbb{R}^d$ such that the problem \eqref{baozhang3}-\eqref{ab} admits a unique strictly convex solution $u\in C^{\infty}(\RR^{d}\setminus\Omega)$.
\end{thm}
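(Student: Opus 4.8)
The plan is to reduce the exterior over-determined problem with a general matrix $A \in \mathcal{A}_k$ to the already-understood case $A = I$ by means of a Legendre transform, which is why the statement requires $k \geq 2$ (so that the dual equation is again of Hessian type) and a range of $c$ below the threshold $\bar c$. First I would normalize: since $A$ is symmetric positive definite, after an orthogonal change of coordinates we may assume $A = \operatorname{diag}(a_1,\dots,a_d)$ with $\sigma_k(a_1,\dots,a_d) = \binom{d}{k}$, and by the change $x \mapsto x + $ const we can absorb the linear term so that the prescribed asymptotics is $u(x) - (\tfrac12 x \cdot A x + \tilde c) \to 0$; the condition $c < \bar c$ should translate, after these reductions, into the condition that the dual data lies in the range where Theorem \ref{cases:80} (or its $A=I$ analogue with general center) applies.

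The core step is the Legendre (Fenchel) transform. If $u$ is a strictly convex solution on $\mathbb{R}^d \setminus \Omega$ with the quadratic asymptotics governed by $A$, then its Legendre transform $u^*(y) = \sup_x (x\cdot y - u(x))$ is strictly convex, is defined on a domain whose behavior near the origin is governed by $A^{-1}$ (the gradient image of a neighborhood of infinity), and satisfies a dual Hessian-type equation. Concretely, using $D^2 u^*(Du(x)) = (D^2 u(x))^{-1}$, the equation $\sigma_k(\lambda(D^2 u)) = \binom{d}{k}$ becomes $\sigma_{d-k}(\lambda(D^2 u^*)) / \sigma_d(\lambda(D^2 u^*)) = \binom{d}{k}$, i.e. a Hessian-quotient equation $\sigma_{d-k}(\lambda(D^2 u^*)) = \binom{d}{k}\det(D^2 u^*)$ — here is where $k\ge 2$ matters, since $d-k \le d-2$ keeps us away from the degenerate top order. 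The Dirichlet–Neumann overdetermination $u=0,\ \partial u/\partial\nu = 1$ on $\partial\Omega$ transforms into corresponding Dirichlet–Neumann data for $u^*$ on the boundary of the image domain $\Omega^* := Du(\mathbb{R}^d\setminus\bar\Omega)$'s complement: $u = 0$ means $u^*(Du(x)) = x\cdot Du(x)$ on $\partial\Omega$, and $|Du| = 1$-type information plus $\partial u/\partial\nu = 1$ pins down the boundary gradient of $u^*$. I would then invoke the $A=I$ existence/uniqueness theory (Theorem \ref{cases:80} together with the Hessian-quotient existence results of Li–Li \cite{lili2018} and Bao–Li–Li \cite{baolili} cited above) for the dual problem with the transformed, now rotationally-adapted data, obtain a unique dual domain and dual solution, and transform back. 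Uniqueness of $\Omega$ follows because the Legendre transform is an involution on strictly convex functions, so two solution pairs $(\Omega_1,u_1),(\Omega_2,u_2)$ would give two dual pairs contradicting uniqueness downstairs.

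I would organize the write-up as: (i) normalization lemma reducing to diagonal $A$ with $b=0$; (ii) a precise statement and proof that the Legendre transform interchanges the exterior $k$-Hessian problem with asymptotics $A$ and an interior (or punctured-neighborhood) Hessian-quotient $(d-k,d)$ problem with asymptotics $A^{-1}$, including the exact translation of the Dirichlet–Neumann conditions and of the constant $c$ into the dual constant, with the threshold $\bar c = \frac{|b|^2\lambda_{\min}(A) - \lambda_{\max}(A)}{2\lambda_{\min}(A)\lambda_{\max}(A)}$ emerging as the requirement that the dual constant stays in the solvable range; (iii) apply the known solvability of the dual problem; (iv) transform back, checking smoothness ($u \in C^\infty$ away from $\Omega$ by elliptic regularity once strict convexity and non-degeneracy are known) and that the resulting $\Omega$ is smooth and bounded; (v) uniqueness via the involutivity of the transform.

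The main obstacle I expect is step (ii): controlling the Legendre transform near the "boundary at infinity." One must show that the gradient map $Du$ sends neighborhoods of infinity to punctured neighborhoods of $0$ diffeomorphically with the right asymptotics — this uses the decay estimate \eqref{fd4} (available here since $k\ge 2$, via Bao–Wang \cite{BaoW}) to control $u^*$ near $y=0$ and to identify its "asymptotic quadratic" as $\tfrac12 y\cdot A^{-1} y$ plus lower order. A second delicate point is matching boundary conditions precisely: $\partial u/\partial\nu = 1$ on $\partial\Omega$ combined with $u=0$ does not by itself fix $Du$ on $\partial\Omega$ (only its normal component and the fact that $u$ vanishes there, forcing the tangential part to vanish too since $u\equiv 0$ on $\partial\Omega$), so one gets $Du = \nu$ on $\partial\Omega$, hence $Du(\partial\Omega) \subset \mathbb{S}^{d-1}$; the dual domain's inner boundary is thus exactly the unit sphere, and the overdetermined data for $u^*$ becomes a genuine free-boundary-type condition whose solvability is what Theorem \ref{cases:80}-type analysis (now for the quotient equation) must supply. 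Handling the range $c<\bar c$ sharply — showing both existence for $c<\bar c$ and consistency of the construction — is where the bookkeeping is heaviest, but it is routine once the duality dictionary in (ii) is set up correctly.
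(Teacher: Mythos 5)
Your top-level idea — Legendre-transform the exterior $k$-Hessian problem into a dual Hessian-quotient problem, using the Dirichlet--Neumann data to pin the image of $\partial\Omega$ to the unit sphere — is indeed the paper's starting point (Lemma~\ref{miyun}). But several of the details you give are off, and the proposal is missing the actual solvability argument for the dual problem, which is where essentially all of the paper's work lies. First, the geometry of the dual domain is misstated: you describe $u^*$ as living on a punctured neighborhood of the origin "whose behavior near the origin is governed by $A^{-1}$," but for strictly convex $u$ with $u\sim\tfrac12 x\cdot Ax$ the gradient map sends neighborhoods of infinity to neighborhoods of infinity, and together with $|Du|\geq 1$ near $\partial\Omega$ one gets $Du(\mathbb{R}^d\setminus\bar\Omega)=\mathbb{R}^d\setminus\bar{B}_1$, an \emph{exterior} domain, with the $A^{-1}$-quadratic governing the asymptotics at infinity, not at the origin. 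Second, calling the dual boundary data a "genuine free-boundary-type condition" inverts the key point: the two overdetermined conditions on the unknown $\partial\Omega$ collapse into a \emph{single} Robin condition $u^*=\partial u^*/\partial n$ on the \emph{fixed} sphere $\partial B_1$; the free boundary problem becomes a standard boundary value problem, and this is exactly what makes the duality useful. Third, the role of $k\geq 2$ is not "staying away from the degenerate top order" — the quotient equation is fine for $k=1$ too — but rather ensuring the decay exponent $d_k^*(a)>2$ used to match the sub-solution's asymptotics with~\eqref{fd7'}; this is also why Theorem~\ref{cases:20} imposes $\lambda_{\max}(A)<\tfrac12$.

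The central gap, though, is that you have no mechanism for actually solving the dual Robin problem. Invoking "the $A=I$ existence/uniqueness theory" or the Li--Li and Bao--Li--Li results does not work: those treat Dirichlet problems, not Robin problems on an exterior domain, and the Legendre transform does \emph{not} reduce $A\neq I$ to $A=I$ — the dual asymptotics $\tfrac12\sum a_ip_i^2-c$ remain anisotropic. The paper instead constructs explicit sub/super solutions adapted to general $a$ (Section~3), sets up a continuity method on annuli $B_R\setminus\bar B_1$ in a homotopy parameter $t$ with $a(t)=ta+(1-t)(1,\dots,1)$ together with the full $C^0$--$C^2$ a priori estimates and an invertibility-of-linearization lemma (Section~4), and then derives local estimates uniform in $R$ with a Cantor diagonal to pass to the full exterior domain (Section~5). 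A further technical point you do not address: Pogorelov-type interior estimates are not available for general Hessian quotient equations, so the paper obtains the asymptotic behavior of $D^2u^*_R$ by transferring through the Legendre transform to the original $k$-Hessian equation $\sigma_k(\lambda(D^2u_R))=\binom{d}{k}$, where Chou--Wang estimates apply, and then transforming back (Lemma~\ref{cl}). Without some version of these steps, the proposal does not constitute a proof.
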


For the Poisson equation, i.e. equation \eqref{la}, we need an extra assumption on $A$.

\begin{thm}\label{cases:20}
Let $d\geq 3$. Given $(A,b,c)\in\mathcal{A}_{1}\times \RR^{d}\times(-\infty,\bar{c})$, if we further assume that  $\lambda_{\max}(A)<\frac{1}{2}$, then there exists a unique smooth, bounded domain $\Omega\subset\mathbb{R}^d$ such that the problem \eqref{la}, \eqref{baozhang2}, \eqref{ab} admits a unique strictly convex solution $u\in C^{\infty}(\RR^{d}\setminus\Omega)$.
\end{thm}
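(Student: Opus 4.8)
The plan is to reduce the exterior over-determined problem for the Poisson equation to the analogous problem for the Monge-Ampère equation (the $k=d$ case of Theorem \ref{cases:10}), for which the existence and uniqueness of the domain have already been established, by means of a Legendre transform. First I would observe that since $u$ is strictly convex and solves $\Delta u = d$ in $\RR^d\setminus\bar\Omega$ with the prescribed asymptotics \eqref{ab} for $A\in\mathcal{A}_1$, the gradient map $y = Du(x)$ is a diffeomorphism from $\RR^d\setminus\bar\Omega$ onto an exterior domain $\RR^d\setminus\bar{\Omega^*}$, where $\Omega^* = Du(\Omega)$ is the image of the interior boundary data. The Legendre transform $u^*(y) = \sup_x (x\cdot y - u(x))$ then satisfies $D^2 u^*(y) = (D^2 u(x))^{-1}$ at corresponding points, so that $\Delta u = d$ becomes $\mathrm{tr}\,((D^2u^*)^{-1}) = d$, which I would rewrite using the Newton/MacLaurin identities as $\sigma_{d-1}(\lambda(D^2u^*)) = d\,\det(D^2 u^*) = d\,\sigma_d(\lambda(D^2u^*))$. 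This is not yet the Monge-Ampère equation, so the Legendre transform alone is insufficient; the key point is to combine it with the analysis of the \emph{one-dimensional} radial ODE together with a comparison/sub-super-solution argument, exactly as in the proof of Theorem \ref{cases:10}.

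Concretely, the approach is: (i) diagonalize $A$ so that $A = \mathrm{diag}(a_1,\dots,a_d)$ with $\sum a_i = d$ (this is $\sigma_1(\lambda(A)) = \binom{d}{1} = d$) and, after the translation $x\mapsto x+b$, reduce to the case $b=0$; (ii) under the Legendre transform the asymptotic quadratic $\tfrac12 x\cdot Ax$ is sent to $\tfrac12 y\cdot A^{-1}y$, whose eigenvalues are $a_i^{-1}$, and the constant $c$ is sent to $-c$; (iii) the transformed problem is an exterior \emph{Dirichlet} problem for the trace-of-inverse operator on $\RR^d\setminus\bar{\Omega^*}$ with the boundary conditions $u^*=$ (Legendre datum) and $Du^* = $ (inverse of the Neumann datum), i.e. $\nu^*\cdot Du^* $ is controlled by the condition $\partial u/\partial\nu = 1$ on $\partial\Omega$; (iv) solve this transformed exterior problem by the continuity/barrier method (constructing explicit radial sub- and super-solutions built from the solutions of the associated ODE $(\rho^{d-1} w')' = $ const), obtaining existence of a convex $u^*$ with the prescribed asymptotics, and then invert the Legendre transform to recover $u$; (v) the over-determined (Serrin) condition $\partial u/\partial\nu = 1$ forces the free boundary $\partial\Omega^*$ (and hence $\partial\Omega$) to be uniquely determined — this is where the eigenvalue condition is used. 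The role of the hypothesis $\lambda_{\max}(A) < \tfrac12$ (equivalently $a_i^{-1} > 2$ for all $i$, so $\lambda_{\min}(A^{-1}) > 2$) is to guarantee that the transformed quadratic $\tfrac12 y\cdot A^{-1}y$ is "large enough'' at infinity that the relevant free-boundary/existence argument from Theorem \ref{cases:10} applies to the trace-of-inverse equation; I would isolate precisely which inequality in the barrier construction needs $a_i^{-1}>2$ and verify it, and I expect that $c < \bar c$ plays the same role it plays in Theorem \ref{cases:10}, namely it ensures the constructed domain $\Omega$ is nonempty and lies strictly inside the region where the gradient map is invertible.

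The main obstacle I anticipate is step (iv)–(v): after the Legendre transform one does \emph{not} get the Monge-Ampère equation but rather $\sigma_{d-1} = d\,\sigma_d$, a Hessian \emph{quotient} equation $\sigma_{d-1}/\sigma_d = d$, i.e. $\sigma_1$ of the inverse Hessian. One cannot simply quote Theorem \ref{cases:10} verbatim; instead one must redo the existence theory (interior estimates, the structure of radial solutions, the barrier argument, and the uniqueness of the over-determined free boundary) for this specific operator in the exterior domain. The saving grace is that, written in terms of the inverse Hessian, the operator is again of the same "$\sigma_k$'' type — it is $\sigma_1(\lambda((D^2u^*)^{-1}))$ — so the radial ODE is explicitly integrable and the same monotonicity that drives the free-boundary uniqueness in Theorem \ref{cases:10} is available; the extra hypothesis $\lambda_{\max}(A)<\tfrac12$ is exactly what is needed to keep this ODE analysis (in particular the sign of the relevant derivative, which controls whether the Serrin condition can be met) inside the regime handled by the earlier argument. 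I would therefore organize the proof so that Theorems \ref{cases:10} and \ref{cases:20} are special cases of a single lemma about the exterior over-determined problem for $\sigma_1$ of the inverse Hessian (equivalently $\sigma_{d-1}/\sigma_d$), proved once by the Legendre-transform-plus-barrier method, with the numerical hypothesis on $\lambda_{\max}(A)$ entering only in verifying the hypotheses of that lemma when $k=1$.
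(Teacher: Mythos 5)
Your plan coincides with the paper's actual proof: the Legendre transform sends the exterior Poisson problem to the Hessian quotient problem $\sigma_d(\lambda(D^2u^*))/\sigma_{d-1}(\lambda(D^2u^*)) = 1/d$ on $\mathbb{R}^d\setminus\bar{B}_1$ with the Robin condition $u^* = \partial u^*/\partial n$ on $\partial B_1$, and this is solved by exactly the radial-ODE sub/super-solution construction, continuity-method-on-annuli, and local-limit scheme that the paper also uses for all $k$, with the hypothesis $\lambda_{\max}(A)<\tfrac12$ entering only to ensure the sub-solution's decay exponent $d_1^*(a)$ exceeds $2$ (it does not, as you briefly suggest in step (v), drive the free-boundary uniqueness). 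One small slip: in the paper's convention $a_i = 1/\lambda_i(A)$, so $\lambda_{\max}(A)<\tfrac12$ is $a_i>2$ rather than $a_i^{-1}>2$.
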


\begin{rem}\label{sm}
The decay rate for the solutions $u$, which are obtained by the above two theorems, approaching the quadratic polynomial is \eqref{fd4}. 

\end{rem}

\begin{rem}
Concerning the above two theorems, it may be an  interesting question to find  the sharp constant $c_{*}$ for the case $A\in\mathcal{A}_k$ and $A\neq I$ such that the problem \eqref{baozhang3}-\eqref{ab} admits a unique strictly convex solution which is non-radially symmetric.
\end{rem}

Note that our overdetermined problem for the Poisson equation is indeed a free boundary problem, which may be related to Berestycki-Caffarelli-Nirenberg conjecture \cite{BCN}. For more results on the free boundary problem, we refer to \cite{cs} and the references therein.

\subsection{Questions and comments} 

A function $u \in C^2 (\RR^{d}\setminus\bar{\Omega})$ is called $k$-convex if 
\begin{equation*}
\lambda (D^2 u) \in \Gamma_k:= \{\lambda \in \mathbb{R}^{d}: \sigma_{j} (\lambda) > 0, j = 1, \cdots, k\}~~~~\mbox{in }\RR^{d}\setminus\bar{\Omega}.
\end{equation*}
\begin{Def}
For $1\leq k\leq d$, let $\tilde{\mathcal{A}}_k$ denote the set of all $d\times d$ real symmetric matrix $A$ satisfying $\lambda (A) \in \Gamma_k$ and $\sigma_{k}(\lambda(A))=\binom{d}{k}$.  
\end{Def}

\begin{lem}\label{general}
For $1\leq k\leq d-1$, if $u\in C^{2}(\RR^{d}\setminus\bar{\Omega})$ is a $k$-convex solution of (\ref{baozhang3}) and satisfies (\ref{ab}) for some $d\times d$ real symmetric matrix $A$, $b\in\RR^{d}$ and $c\in\RR$, then $A\in\tilde{\mathcal{A}}_k$.
\end{lem}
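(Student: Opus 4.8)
The plan is to show that the asymptotic matrix $A$ is forced to lie in $\tilde{\mathcal{A}}_k$ by a blow-down argument combined with the known gradient estimates. First I would normalize: after subtracting the affine part $b\cdot x+c$ (which changes neither the equation nor the Hessian) we may assume $b=0$, $c=0$, so that $u(x)-\tfrac12 x\cdot Ax\to 0$ as $|x|\to\infty$. The first step is to upgrade this $C^0$-decay to gradient decay: using interior elliptic estimates for the $k$-Hessian operator (the equation is uniformly elliptic along the solution once we know the Hessian is close to $A$ at large scales) together with a rescaling $u_R(y):=R^{-2}u(Ry)$, one obtains that $D^2 u(x)\to A$ as $|x|\to\infty$. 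Since $u$ is $k$-convex, $\lambda(D^2u(x))\in\Gamma_k$ for every $x$, and $\Gamma_k$ is a closed-under-limits cone intersected appropriately; passing to the limit gives $\lambda(A)\in\overline{\Gamma_k}$. Likewise, evaluating the equation $\sigma_k(\lambda(D^2u(x)))=\binom{d}{k}$ in the limit yields $\sigma_k(\lambda(A))=\binom{d}{k}>0$.

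The only gap is that a priori $\lambda(A)\in\overline{\Gamma_k}$ rather than $\Gamma_k$, i.e.\ we must rule out $\lambda(A)\in\partial\Gamma_k$. This is where I would use the structure of $\Gamma_k$: on the boundary $\partial\Gamma_k$ we have $\sigma_j(\lambda(A))>0$ for $j<k$ but the relevant positivity degenerates, and one checks that $\sigma_k(\lambda(A))=\binom{d}{k}>0$ together with $\sigma_j(\lambda(A))\ge 0$ for $j\le k$ already forces, via Newton--Maclaurin inequalities $\sigma_j^{1/j}\ge \sigma_{j+1}^{1/(j+1)}$ (suitably normalized by binomial coefficients), that $\sigma_j(\lambda(A))>0$ strictly for all $j\le k$; hence $\lambda(A)\in\Gamma_k$. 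Concretely: if $\sigma_k(\lambda(A))>0$ then by Maclaurin $\sigma_{k-1}(\lambda(A))>0$, and inductively $\sigma_j(\lambda(A))>0$ for all $1\le j\le k$, provided we also know we are in the connected component of $\Gamma_k$ rather than some other sign configuration — and that is guaranteed because $D^2u(x)$ stays in $\Gamma_k$ along the way and the limit is a limit of points of $\Gamma_k$ with $\sigma_k$ bounded away from $0$.

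The main obstacle I anticipate is the first step, namely rigorously promoting the $C^0$ asymptotics \eqref{ab} to $C^2$ (Hessian) convergence $D^2u(x)\to A$; this requires the exterior gradient/Hessian estimates for $k$-Hessian equations, which for $2\le k\le d-1$ is exactly the content of the Bao--Wang decay estimate cited as \cite{BaoW} and for $k=1$ follows from classical potential theory (B\^ocher), so in fact most of this can be quoted. Once $D^2u(x)\to A$ is in hand, the membership $\lambda(A)\in\Gamma_k$ is a short algebraic argument using Maclaurin's inequality as above. I would organize the write-up as: (1) reduce to $b=c=0$; (2) rescale and apply the cited decay/regularity to get $D^2u\to A$; (3) pass to the limit in the equation and in $\lambda(D^2u)\in\Gamma_k$; (4) use Newton--Maclaurin to close the cone condition, concluding $A\in\tilde{\mathcal{A}}_k$.
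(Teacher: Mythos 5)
Your proposal is correct in its overall structure and follows essentially the same route as the paper's proof in Section~8: normalize to $b=c=0$, rescale around a far-away point, apply interior estimates of Chou--Wang to upgrade the $C^0$ asymptotics to the Hessian convergence $D^2u(x)\to A$, and then read off the conclusion in the limit. Two points are worth noting.

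First, your step of quoting \cite{BaoW} for the Hessian convergence is not quite a closed citation: the Bao--Wang decay result (and the statement ``$A\in\mathcal{A}_k$'') is proved there for \emph{strictly convex} solutions, whereas Lemma~\ref{general} concerns merely $k$-convex solutions with $k\le d-1$. This is precisely why the paper redoes the rescaling argument from scratch in Section~8 rather than quoting: it fixes a large point $x$, sets $R\sim|x|$, forms $\bar u_R(y)=(4/R)^2u(x+\tfrac{R}{4}y)$ minus an affine term, shows $\bar u_R$ is a bounded $k$-convex solution of $\sigma_k(\lambda(D^2\bar u_R))=\binom{d}{k}$ on the fixed domain $D_2$, and then applies the Chou--Wang interior gradient estimate and Pogorelov-type second-derivative estimate (Theorems~3.2 and~1.5 of \cite{cw2001}, valid for $k$-convex solutions) together with Schauder theory to conclude $|D^2w(x)|\le C|x|^{-2}$. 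Your blow-down $u_R(y)=R^{-2}u(Ry)$ is in the same spirit, but the local rescaling around a moving base point $x$ is what lets one work on a fixed bounded domain to which the Pogorelov estimate applies; you should carry out this step rather than cite it.

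Second, and to your credit, your final Newton--Maclaurin step is a genuine improvement over the paper's exposition. Once $D^2u(x)\to A$, taking limits only gives $\lambda(A)\in\overline{\Gamma_k}$ with $\sigma_k(\lambda(A))=\binom{d}{k}>0$, and one must still rule out $\lambda(A)\in\partial\Gamma_k$. The inductive Maclaurin argument you sketch (for $\lambda\in\overline{\Gamma_k}$, $\sigma_k(\lambda)>0$ forces $\sigma_j(\lambda)>0$ for all $j\le k$, hence $\lambda\in\Gamma_k$) is exactly the right closure and should be written out. The paper in fact opens its proof with ``Since $\lambda(A)\in\Gamma_k$\dots'' before this has been established, so your closing argument tidies up a point that the paper leaves implicit.
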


For $1\leq k\leq d-1$, it is natural to discuss the uniqueness and existence of  the domain $\Omega$ such that the problem (\ref{baozhang3})-(\ref{ab}) admits a unique solution in the class of $k$-convex functions.

For the uniqueness, we propose the following question:

\begin{question}\label{ershu}
Let $d\geq 3$ and $1\leq k\leq d-1$. Given $(A,b,c)\in\tilde{\mathcal{A}}_{k}\times \RR^{d}\times\RR$, suppose that $(\Omega_{1}$, $u_{1})$ and $(\Omega_{2}$, $u_{2})$ are two pairs of domain and solution such that the problem \eqref{baozhang3}-\eqref{ab} admits a unique solution in the class of $k$-convex functions. Can we conclude that $\Omega_{1}=\Omega_{2}$ and $u_{1}\equiv u_{2}$?
\end{question}

\begin{rem}
As in the proof of our main results, the answer of the above question is Yes in the class of strictly convex functions. 
\end{rem}

For the existence, we propose the following question:  
\begin{question}\label{bashu}
Let $d\geq 3$ and $1\leq k\leq d-1$. Given $(A,b,c)\in\tilde{\mathcal{A}}_{k}\times \RR^{d}\times\RR$, does there exist a bounded domain such that the problem \eqref{baozhang3}-\eqref{ab} admits a unique $k$-convex solution?
\end{question}

For the case $A=I\in\tilde{\mathcal{A}}_{k}$, we can find a constant $\hat{c}$ such that the answer of Question \ref{bashu} is Yes with $c<\hat{c}$ and the domain is a ball. 

\begin{rem}\label{pangxie}
Let $d\geq 3$ and $1\leq k\leq d-1$. Given $A=I\in\tilde{\mathcal{A}}_{k}$ and $b\in\RR^{d}$, suppose that $\Omega$ is a ball centered at $-b$ with some radius $r_{0}>0$. Then there exists a constant $\hat{c}=\hat{c}(k,d,b)$ such that, if $c\geq \hat{c}$, the problem \eqref{baozhang3}-\eqref{ab} admits no $k$-convex solution and if $c<\hat{c}$, the problem \eqref{baozhang3}-\eqref{ab} admits a unique $k$-convex solution $u\in C^{\infty}(\RR^{d}\setminus\Omega)$ of the form
\begin{equation}\label{daxia}
u(x)=\int_{r_{0}}^{|x+b|}s(1+Cs^{-d})^{1/k}ds,
\end{equation}
where $C=C(r_{0})=r_{0}^{d-k}-r_{0}^{d}$. Moreover, the constant $\hat{c}$ is strictly decreasing with respect to $k$. 
\end{rem}

In view of Remark \ref{pangxie}, if $A=I\in\tilde{\mathcal{A}}_{k}$, one may ask Question \ref{ershu} more precisely. That is the following rigidity question.  
\begin{question}\label{laosan}
Let $d\geq 3$ and $1\leq k\leq d-1$. Given $A=I\in\tilde{\mathcal{A}}_{k}$, $b\in\RR^{d}$ and $c\in\RR$, suppose that there exists a bounded domain $\Omega$ such that the problem \eqref{baozhang3}-\eqref{ab} admits a unique $k$-convex solution $u\in C^{\infty}(\RR^{d}\setminus\Omega)$, then can we conclude that $\Omega$ is a ball centered at $-b$ with some radius $r_{0}>0$ and $u$ is of the form \eqref{daxia}?
\end{question}

We can give a partial answer of Question \ref{laosan} under an additional assumption:
\begin{equation}\label{fd4--}
\limsup\limits_{|x|\rightarrow\infty}|x|^{\alpha}\left|u(x)-\left(\frac{1}{2}|x|^{2}+c\right)\right|<\infty,
\end{equation}
where $\alpha>d$. 

\begin{thm}\label{ridig''}
Let $d\geq 3$ and $2\leq k\leq d-1$. Suppose that there exists a bounded star-shaped domain $\Omega\subset\mathbb{R}^d$ such that problem \eqref{baozhang3}, \eqref{baozhang2}, \eqref{fd4--} with some $c<0$ and $\alpha>d$ admits a $k$-convex solution $u\in C^{2}(\RR^{d}\setminus\Omega)$, then $\Omega$ is a unit ball and $u(x)=\frac{1}{2}|x|^{2}-\frac{1}{2}$.
\end{thm}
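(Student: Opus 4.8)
The plan is to adapt Weinberger's $P$-function argument (see \cite{w1971,bnst2008a}) to the exterior setting and show that in fact $D^2u\equiv I$ on $\RR^d\setminus\Omega$. Once this is established, integrating and using \eqref{fd4--} with $\alpha>d$ forces $u(x)=\tfrac12|x|^2+c$ exactly (there is no lower-order correction); then $u=0$ on $\partial\Omega$ gives $\partial\Omega=\{|x|^2=-2c\}$, and $\partial u/\partial\nu=1$ gives $-2c=1$, so $\Omega$ is the unit ball and $u(x)=\tfrac12(|x|^2-1)$. Thus the whole problem reduces to proving the rigidity $D^2u\equiv I$.

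For this, set $S^{ij}:=\partial\sigma_k(\lambda(D^2u))/\partial u_{ij}$, which is positive definite (as $u$ is $k$-convex) and divergence-free, so $L\phi:=S^{ij}\partial_i\partial_j\phi=\partial_i(S^{ij}\partial_j\phi)$ is a divergence-form uniformly elliptic operator, and introduce $P:=|\nabla u|^2-2u$. Using $\partial_l\sigma_k(\lambda(D^2u))=0$ one computes $LP=2\big[(\sigma_1-k)\sigma_k-(k+1)\sigma_{k+1}\big](\lambda(D^2u))$, and Maclaurin's inequalities, together with the constraint $\sigma_k(\lambda(D^2u))=\binom dk$, give $(\sigma_1-k)\sigma_k\ge (d-k)\binom dk=(k+1)\binom d{k+1}\ge (k+1)\sigma_{k+1}$, with equality if and only if $\lambda(D^2u)=(1,\dots,1)$; hence $LP\ge0$ everywhere and $LP>0$ wherever $D^2u\neq I$. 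On $\partial\Omega$ one has $\nabla u=\nu$, so $P\equiv1$ there; and from \eqref{fd4--} with $\alpha>d$, interior Schauder estimates applied to $w:=u-\tfrac12|x|^2-c$ (which solves a uniformly elliptic divergence-form equation with coefficients $\to\binom{d-1}{k-1}\delta_{ij}$ at infinity) give $|\nabla u-x|=O(|x|^{-\alpha-1})$, $|D^2u-I|=O(|x|^{-\alpha-2})$, hence $P\to-2c$ with $\nabla P=O(|x|^{-\alpha-1})$ and $LP=O(|x|^{-2\alpha-4})$, so that all integrals at infinity appearing below converge.

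The next ingredient is two global identities obtained by integrating over $\Omega_R:=B_R\setminus\bar\Omega$ and letting $R\to\infty$. (i) Integrating $\sigma_k(\lambda(D^2u))=\binom dk$ in the form $\sigma_k=\tfrac1k\partial_i(S^{ij}u_j)$, and using that on $\partial\Omega$ the tangential part of $D^2u$ is the second fundamental form so that $\nu^TS^{ij}\nu=\sigma_{k-1}(\kappa)$ with $\kappa=(\kappa_1,\dots,\kappa_{d-1})$ the principal curvatures, one gets $\int_{\partial\Omega}\sigma_{k-1}(\kappa)\,dS=k\binom dk|\Omega|$. (ii) Green's identity for $u$ and $P$ relative to $L$ over $\Omega_R$ (so $Lu=k\binom dk$), after subtracting the divergent $O(R^d)$ term — whose coefficients cancel automatically — yields $\int_{\RR^d\setminus\bar\Omega}u\,LP\,dx=k\binom dk\big[\int_{\RR^d\setminus\bar\Omega}(P+2c)\,dx+(1+2c)|\Omega|\big]$, after inserting (i). The star-shapedness of $\Omega$ enters through the Minkowski formulas $\int_{\partial\Omega}\sigma_j(\kappa)\langle x,\nu\rangle\,dS=\tfrac{d-j}{j}\int_{\partial\Omega}\sigma_{j-1}(\kappa)\,dS$ and the Alexandrov--Fenchel--type (quermassintegral) inequalities valid for star-shaped $k$-convex domains, which are needed to turn the curvature identities into a genuine constraint on $\Omega$.

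To finish, one combines these with the maximum principle for $L$ in the exterior domain ($P\le\max(1,-2c)$), the strong maximum principle and Hopf's lemma at $\partial\Omega$ (when $P\not\equiv$ const, giving $u_{\nu\nu}<1$ on $\partial\Omega$, hence $\sigma_{k-1}(\kappa)+\sigma_k(\kappa)>\binom dk$ pointwise), and a sign fact for $u$ (namely $u\ge0$ on $\RR^d\setminus\Omega$, which makes $\int u\,LP\,dx\ge0$), via a case distinction in $c<0$: the range $c<-\tfrac12$ makes the right side of the Green identity negative while the left side is $\ge0$; the value $c=-\tfrac12$ forces both sides to vanish, hence $D^2u\equiv I$; and the range $-\tfrac12<c<0$ is excluded by feeding the pointwise Hopf inequality $\int_{\partial\Omega}(\sigma_{k-1}(\kappa)+\sigma_k(\kappa))\,dS>\binom dk|\partial\Omega|$ into identity (i) and the quermassintegral inequalities. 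I expect the main obstacle to be precisely this last step: the two global identities by themselves do not force rigidity in the borderline regime $-\tfrac12<c<0$, so one must exploit the star-shapedness through sharp isoperimetric-type inequalities for the curvature integrals of $\partial\Omega$, and one must also secure the positivity of $u$ and the exact decay rates in the exterior so that all of the integrations by parts and the cancellation of the $O(R^d)$ terms are legitimate.
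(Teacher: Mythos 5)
Your proposal is in the right spirit --- Weinberger-type integral identities together with Maclaurin inequalities are indeed what drive the proof --- but as written it has a genuine gap, which you yourself flag: the $P$-function route with the case split in $c$ does not close in the range $-\tfrac12<c<0$, and the appeal to ``sharp quermassintegral inequalities'' to rescue that case is left entirely unverified. That is precisely where the difficulty lies, so the argument as proposed is incomplete.

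The paper's actual argument avoids the case split altogether, by replacing the Weinberger function $P=|Du|^2-2u$ (which is only $L$-\emph{sub}harmonic) with the Rellich--Pohozaev type function
\[
\varphi := x\cdot Du-2u,
\]
which satisfies $\sigma_k^{ij}\varphi_{ij}=0$ \emph{exactly}. Since $\varphi\to -2c>0$ at infinity (here $c<0$ is used) and $\varphi=x\cdot\nu\ge 0$ on $\partial\Omega$ (here star-shapedness is used), the maximum principle gives $\varphi\ge 0$ everywhere; writing $\varphi=r^{2}\,x\cdot D(u/r^{2})$ and integrating along rays from $\partial\Omega$ (where $u=0$) then yields the key sign facts $u\ge 0$ and $x\cdot Du\ge 2u\ge 0$. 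After that, the mechanism is not a comparison of $\max P$ values, but a multiplier argument: Newton's inequality gives $\Delta u\ge d$, which is multiplied by the nonnegative quantities $u$ and $x\cdot Du$ and integrated over $B_R\setminus\bar\Omega$; combining the two resulting inequalities pins down a lower bound for $\int_{B_R\setminus\bar\Omega}u$. One then computes $\int_{B_R\setminus\bar\Omega}u\,\sigma_{k+1}(\lambda(D^2u))$ by parts and, using the Maclaurin inequality $\sigma_{k+1}\le\binom{d}{k+1}$ together with the identity $\int_{\partial\Omega}H_{k-1}=k\binom dk|\Omega|$ (which you correctly identified), finds that $0\ge\int u(\sigma_{k+1}-\binom{d}{k+1})\ge O(R^{d-\alpha})$. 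Letting $R\to\infty$ forces $\sigma_{k+1}\equiv\binom{d}{k+1}$, hence $D^2u\equiv I$. So the missing ideas in your sketch are (i) using the $L$-harmonic function $x\cdot Du-2u$ rather than $P$ to get pointwise positivity of $u$ and $x\cdot Du$ from $c<0$ and star-shapedness, and (ii) feeding Newton's inequality through these multipliers so that the final comparison with $\sigma_{k+1}\le\binom d{k+1}$ has the right sign for every $c<0$, with no borderline regime to handle separately.
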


For $k=1$, the star-shapedness and the condition $c< 0$ can be removed.  
\begin{thm}\label{ridig}
Let $d\geq 3$. Suppose that there exists a bounded domain $\Omega\subset\mathbb{R}^d$ such that the problem \eqref{la}, \eqref{baozhang2}, \eqref{fd4--} with some $c\in\RR$ and $\alpha>d$ admits a solution $u\in C^{2}(\RR^{d}\setminus\Omega)$, then $\Omega$ is a unit ball and $u(x)=\frac{1}{2}|x|^{2}-\frac{1}{2}$.
\end{thm}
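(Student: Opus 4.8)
\emph{Proof proposal for Theorem \ref{ridig}.}
The plan is to adapt Weinberger's $P$-function argument to the exterior setting, controlling the two genuinely new features — noncompactness of the domain and the prescribed quadratic behavior at infinity — by means of the strong decay $\alpha>d$ and the Heintze--Karcher inequality. First I would set $E:=\RR^d\setminus\bar\Omega$ and $w:=u-\tfrac12|x|^2-c$, so that $w$ is harmonic in $E$ and $w(x)=O(|x|^{-\alpha})$; interior gradient estimates for harmonic functions then give $D^jw(x)=O(|x|^{-\alpha-j})$. Next introduce the Weinberger function $P:=|\nabla u|^2-2u$. Since $\Delta u=d$, we have $\Delta P=2|D^2u|^2-2d\ge 0$ by Cauchy--Schwarz, so $P$ is subharmonic in $E$; the boundary conditions $u=0$, $u_\nu=1$ force $\nabla u=\nu$ and hence $P=1$ on $\partial\Omega$; and the expansion $P+2c=-2w+2x\cdot\nabla w+|\nabla w|^2=O(|x|^{-\alpha})$ shows $P\to-2c$ at infinity with the same fast rate.

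The first key step is to pin down the global geometry of $\Omega$. Green's theorem on $B_R\setminus\bar\Omega$, letting $R\to\infty$ (the boundary term at infinity is $O(R^{d-2-\alpha})\to 0$), applied to the harmonic function $w$, gives $\int_{\partial\Omega}\partial_\nu w=0$, i.e.\ $\int_{\partial\Omega}(1-x\cdot\nu)\,dS=0$, hence the isoperimetric-type identity $|\partial\Omega|=d|\Omega|$. The second key step is to rule out $c<-\tfrac12$. Let $\Phi$ be the bounded harmonic function in $E$ with $\Phi=1$ on $\partial\Omega$ and $\Phi\to-2c$ at infinity; then $\Phi-P$ is superharmonic and vanishes on $\partial\Omega$ and at infinity, so $P\le\Phi$ by the minimum principle, whence $-2c-P\ge-2c-\Phi=-(1+2c)\eta$, where $\eta\sim c_\eta|x|^{2-d}$ with $c_\eta>0$ is the capacitary potential of $\Omega$. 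If $1+2c<0$, the right-hand side is $\ge c_0|x|^{2-d}$ for large $|x|$ with $c_0>0$, contradicting $-2c-P=O(|x|^{-\alpha})=o(|x|^{2-d})$. Therefore $c\ge-\tfrac12$, and consequently $P\le\Phi\le\max(1,-2c)=1$ throughout $E$.

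Now $P\le 1$ in $E$ and $P=1$ on $\partial\Omega$, so $\partial_\nu P\le 0$ on $\partial\Omega$ (the outward normal of $\Omega$ points into $E$). Computing $\partial_\nu P$ using $\nabla u=\nu$ on $\partial\Omega$ together with the decomposition $\Delta u=u_{\nu\nu}+Hu_\nu+\Delta_{\partial\Omega}u$ (here $H$ is the mean curvature, $u_\nu=1$, and $\Delta_{\partial\Omega}u=0$ since $u\equiv 0$ on $\partial\Omega$), one gets $\partial_\nu P=2(d-1-H)$, so $H\ge d-1$ everywhere on $\partial\Omega$; in particular $\partial\Omega$ is mean-convex. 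I would then invoke the Heintze--Karcher inequality $\int_{\partial\Omega}\frac{d-1}{H}\,dS\ge d|\Omega|$: since $\frac{d-1}{H}\le 1$, the left side is $\le|\partial\Omega|=d|\Omega|$, forcing equality, hence $\frac{d-1}{H}\equiv 1$, i.e.\ $H\equiv d-1$ and $\partial_\nu P\equiv 0$ on $\partial\Omega$. Finally, $\int_E\Delta P=\lim_R\big[\int_{\partial B_R}\partial_r P-\int_{\partial\Omega}\partial_\nu P\big]=0$ (the first term vanishes by the fast decay, the second by the previous line), while $\Delta P=2|D^2w|^2\ge 0$; therefore $D^2w\equiv 0$ in $E$, so $w$ is affine, and since $w\to 0$ at infinity, $w\equiv 0$ and $u=\tfrac12|x|^2+c$. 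Then $u=0$ on $\partial\Omega$ shows $\partial\Omega\subset\{|x|^2=-2c\}$ (so $c<0$), and being a compact hypersurface it coincides with that whole sphere; the condition $u_\nu=1$ then reads $\sqrt{-2c}=1$ on $\partial\Omega$, so $c=-\tfrac12$, $\Omega=B_1(0)$ and $u(x)=\tfrac12(|x|^2-1)$.

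I expect the main obstacle to be the second step — excluding $c<-\tfrac12$ — and, more broadly, managing the interplay between the asymptotic condition and the maximum principle on the unbounded domain; this is precisely where the hypothesis $\alpha>d$ (well beyond the generic rate $d-2$) is used in an essential way, and it is the qualitative feature that distinguishes the exterior problem from the classical bounded Serrin problem. Removing the star-shapedness assumption — which is needed in the higher-order Theorem~\ref{ridig''} — hinges on having the Heintze--Karcher inequality available for the mean curvature, a phenomenon special to $k=1$.
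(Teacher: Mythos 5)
\emph{Comparison with the paper's argument.}
Your proof is correct, but it proceeds along a genuinely different route than the one taken in the paper. The paper proves Theorem~\ref{ridig} by the Brandolini--Nitsch--Salani--Trombetti integral-identity scheme transplanted to the exterior annulus $B_R\setminus\bar\Omega$: one first integrates $\Delta u=d$ to get $|\partial\Omega|=d|\Omega|$, then multiplies the equation by $x\cdot Du$ and by $u$ and integrates by parts (tracking the outer boundary terms using~\eqref{chouba} and~\eqref{choudan}, which is where $\alpha>d$ is consumed) to extract the values of $\int_{B_R\setminus\bar\Omega}|Du|^2$ and $\int_{B_R\setminus\bar\Omega}u$ up to $O(R^{d-\alpha})$; finally one computes $\int u\,\sigma_2(\lambda(D^2u))$ via the divergence structure of $\sigma_2$ and uses the Newton--Maclaurin inequality $\sigma_2(\lambda(D^2u))\le\binom{d}{2}$ (with $\sigma_1=d$) to force $\sigma_2\equiv\binom{d}{2}$, hence $D^2u\equiv I$. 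Your proof instead runs Weinberger's $P$-function $P=|\nabla u|^2-2u$ through the exterior setting: subharmonicity of $P$, the normalization $P\equiv1$ on $\partial\Omega$, the limit $P\to-2c$ at infinity, and a capacitary-potential comparison to exclude $c<-\tfrac12$, yielding $P\le1$ and hence $H\ge d-1$ on $\partial\Omega$; the identity $|\partial\Omega|=d|\Omega|$ then closes the loop via Heintze--Karcher, forcing $H\equiv d-1$ and $\partial_\nu P\equiv0$, after which $\int_E\Delta P=0$ gives $D^2u\equiv I$.

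The two arguments are in the same spirit (both are descendants of Weinberger's and the BNST integral method), but yours makes two structurally different moves that are worth noting. First, the capacitary comparison to rule out $c<-\tfrac12$ is a clean, quantitative use of the $\alpha>d$ hypothesis: the decay rate beats the Newtonian rate $|x|^{2-d}$ of the harmonic measure $\eta$, so the sign of $1+2c$ is pinned down by a one-line contradiction. The paper does not isolate this step; in the integral route it is absorbed into the bookkeeping, and the displayed chain of (in)equalities in the paper's proof closes the argument via $\sigma_2\le\binom{d}{2}$ without ever proving a sign for $u$ or for $c$ separately. Second, your use of Heintze--Karcher replaces the algebraic Newton--Maclaurin inequality by a geometric one, and this is precisely the ingredient that makes the $k=1$ case work without the star-shapedness hypothesis that the paper imposes in Theorem~\ref{ridig''} for $2\le k\le d-1$. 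That observation matches the paper's remark that ``for $k=1$, the star-shapedness and the condition $c<0$ can be removed,'' and your proof gives an alternative, more geometric explanation of why.

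Two small remarks to tighten your write-up. (i) In the comparison step, you should say explicitly that the minimum principle for the superharmonic function $\Phi-P$ is applied on the bounded sets $B_R\cap E$ and that $R\to\infty$ is taken afterwards, since $E$ is unbounded; the argument is correct as you have it, but a referee would want the truncation spelled out. (ii) You invoke Heintze--Karcher for a general mean-convex compact embedded boundary (Ros's version) rather than only for star-shaped domains — state which version you use, since that is exactly the point that lets you drop star-shapedness. With those two clarifications your argument is complete and stands on its own.
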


\subsection{Outline of the proof}
We give a brief introduction of our proof. 
By an orthogonal transform and a translation, we may assume that 
\begin{equation*}
A=\mbox{diag}\{1/a_{1},\cdots,1/a_{d}\}~~~~\mbox{and}~~~~b=0,
\end{equation*}
where $a_{i}>0$, $i=1$, $\cdots$, $d$, and $\sigma_{k}(1/a_{1},\cdots,1/a_{d})=\binom{d}{k}$. Then the asymptotic behavior (\ref{fd4}) can be written as
\begin{equation}\label{fd4'}
\limsup\limits_{|x|\rightarrow\infty}|x|^{d-2}\left|u(x)-\left(\frac{1}{2}\sum\limits_{i=1}^{d}\frac{x_{i}^{2}}{a_{i}}+c\right)\right|<\infty.
\end{equation}

Let $u^{*}=u^{*}(p)$ denote the Ledgendre transform function of $u$. Then, in Section 2, we will show that $u^{*}$ satisfies the Hessian quotient equation in the exterior domain:
\begin{equation}\label{fd5}
\sigma_{d}(\lambda(D^{2}u^{*}))/\sigma_{d-k}(\lambda(D^{2}u^{*}))=\binom{d}{k}~~~~\mbox{in }\mathbb{R}^d\texttt{\symbol{'134}}\bar{B}_{1}
\end{equation}
with the Robin boundary condition on its interior boundary:
\begin{equation}\label{fd6}
u^{*}=\partial u^{*}/\partial n~~~~\mbox{on }\partial B_{1},
\end{equation}
and the following prescribed asymptotic behavior at infinity:
\begin{equation}\label{fd7}
\limsup\limits_{|p|\rightarrow\infty}|p|^{d-2}\left|u^{*}(p)-\left(\frac{1}{2}\sum\limits_{i=1}^{d}a_{i}p_{i}^{2}-c\right)\right|<\infty,
\end{equation}
where $B_{1}\subset\RR^{d}$ denotes the unit ball centered at the origin and $n=n(p)=p$ for any $p\in\partial B_{1}$.

The problem (\ref{fd5})-(\ref{fd7}) is a Hessian quotient equation with the oblique boundary condition on its interior boundary and the dual asymptotic behavior at infinity.  In order to solve this problem, we will divide our proof into three steps.

Step 1. For $0\leq t\leq 1$, in Section 3, we will construct the subsolution $\underline{\omega}_t$ and the supsolution $\bar{\omega}_t$ of the problem: (\ref{fd5}) with the right hand side $\binom{d}{k}$ replaced by $\sigma_{d}(a(t))/\sigma_{d-k}(a(t))$, (\ref{fd6}) and the following asymptotic behavior at infinity:
\begin{equation}\label{fd7----------}
\limsup\limits_{|p|\rightarrow\infty}|p|^{d_{k}^{*}(a(t))-2}\left|u^{*}(p)-\left(\frac{1}{2}\sum\limits_{i=1}^{d}a_{i}(t)p_{i}^{2}-c\right)\right|<\infty,
\end{equation}
where 
\begin{equation}\label{at}
a(t):=(a_1(t),\cdots, a_d(t))=ta+(1-t)(1,\cdots,1),
\end{equation}
and
\begin{equation*}
d_{k}^{*}(a(t)):=\frac{k}{\max\limits_{1\leq i\leq d}a_{i}(t)\sigma_{k-1;i}(a(t))}.
\end{equation*}

Step 2. For sufficiently large $R>1$, in Section 4, we will consider the following problem on the annulus:
\begin{equation}\label{fd5a}
\sigma_{d}(\lambda(D^{2}u^{*}_{R,t}))/\sigma_{d-k}(\lambda(D^{2}u^{*}_{R,t}))=\sigma_{d}(a(t))/\sigma_{d-k}(a(t))~~~~\mbox{in }B_{R}\texttt{\symbol{'134}}\bar{B}_{1}
\end{equation}
with the Robin boundary condition:
\begin{equation}\label{fd6a}
u^{*}_{R,t}=\partial u^{*}_{R,t}/\partial n+(1-t)\psi_t~~~~\mbox{on }\partial B_{1},
\end{equation}
and the Dirichlet boundary condition:
\begin{equation}\label{fd7a}
u^{*}_{R,t}=\underline{\omega}_t~~~~\mbox{on }\partial B_{R},
\end{equation}
where $\underline{\omega}_t$ is constructed as in Step 1 and 
\begin{equation}\label{psit}
\psi_t:=\underline{\omega}_t-\partial \underline{\omega}_t/\partial n.
\end{equation}
It is easy to see that the above problem is solvable for $t=0$ with $u^{*}_{R,0}\equiv\underline{\omega}_0$. In order to solve the above problem for $t=1$, by the continuity method, we will derive the closedness and openness of the problem (\ref{fd5a})-(\ref{fd7a}) with respect to $t$. 

Step 3. For sufficiently large $R>1$, let $u_{R,1}^{*}$ be the solution of (\ref{fd5a})-(\ref{fd7a}) for $t=1$. In Section 5, we will derive the $C^{0}$-$C^{2}$ local estimates of  $u_{R,1}^{*}$ with respect to $R$. Then, using the Cantor subsequences method, we obtain that $u_{R,1}^{*}$ converges to a function $u^{*}$, which is the unique strictly convex solution of the problem (\ref{fd5})-(\ref{fd7}). Since the Pogorelov type interior estimates does not always hold for the Hessian quotient equations, we cannot obtain the asymptotic behavior directly for the dual problem. To overcome this difficulty, we use the Pogorelov type interior estimates for the Hessian equation  \eqref{baozhang3} to obtain the asymptotic behavior, then we derive the asymptotic behavior of the dual problem via the Legendre transform.      

By the above three steps, we can finish the proof of Theorem \ref{cases:10} and \ref{cases:20}.

The rest of this paper is organized as follows. In Section 6, we give the proof of Theorem 1.2 and Remark 1.13. In Section 7, we give the proof of Theorem 1.15 and 1.16. In Section 8, we give the proof of Lemma 1.9.

\section{The dual problem}

Let $u$ be a locally strictly convex $C^{2}$ function defined in $\mathbb{R}^d\verb"\"\Omega$. We define $u^{*}$, the Ledgendre transform function of $u$ as
 \begin{equation}
 u^{*}(p)=p\cdot x-u(x), \quad \forall~x\in\mathbb{R}^d\verb"\"\Omega,~p=Du(x)\in\bar{\Omega}^{*},\label{cases:11}
  \end{equation}
where $\Omega^{*}:=Du(\mathbb{R}^d\texttt{\symbol{'134}}\bar{\Omega})$.

With the aid of Ledgendre transform, we have the following lemma.

\begin{lem}\label{miyun}
If $u\in
C^{2}(\mathbb{R}^d\verb"\"\Omega)$ is a locally strictly convex solution of \eqref{baozhang3}, \eqref{baozhang2} and \eqref{fd4'}, then the Legendre transform of $u$, $u^{*}\in
C^{2}(\mathbb{R}^d\verb"\"B_{1})$ is a locally strictly convex solution of \eqref{fd5}-\eqref{fd7}.

\end{lem}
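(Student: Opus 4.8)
The plan is to carry out the Legendre transform computation carefully and track how each of the three conditions transforms. First I would recall the basic differential identities: if $p = Du(x)$ and $u^*(p) = p\cdot x - u(x)$, then $Du^*(p) = x$ and $D^2 u^*(p) = (D^2 u(x))^{-1}$. Since $u$ is locally strictly convex and $C^2$, $Du$ is a local diffeomorphism and $D^2 u^*$ is well-defined, symmetric and positive definite, so $u^*$ is locally strictly convex. The eigenvalues satisfy $\lambda_i(D^2 u^*(p)) = 1/\lambda_i(D^2 u(x))$ after simultaneous diagonalization. From the elementary-symmetric identity $\sigma_k(\mu_1^{-1},\dots,\mu_d^{-1}) = \sigma_{d-k}(\mu_1,\dots,\mu_d)/\sigma_d(\mu_1,\dots,\mu_d)$ one gets, reading it the other way, that \eqref{baozhang3}, i.e. $\sigma_k(\lambda(D^2 u)) = \binom{d}{k}$, is equivalent to
\begin{equation*}
\frac{\sigma_d(\lambda(D^2 u^*))}{\sigma_{d-k}(\lambda(D^2 u^*))} = \frac{1}{\sigma_{d-k}(\lambda(D^2 u^*)^{-1})}\cdot\frac{1}{\sigma_d(\lambda(D^2u^*))}\cdot\sigma_d(\lambda(D^2u^*))\cdots
\end{equation*}
— more cleanly, applying the identity with $\mu_i = \lambda_i(D^2 u^*)$ shows $\sigma_k(\lambda(D^2 u)) = \sigma_k(\mu^{-1}) = \sigma_{d-k}(\mu)/\sigma_d(\mu)$, hence $\sigma_d(\lambda(D^2u^*))/\sigma_{d-k}(\lambda(D^2u^*)) = \binom{d}{k}$, which is \eqref{fd5}. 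I must also identify the domain: $\Omega^* = Du(\mathbb{R}^d\setminus\bar\Omega)$, and the boundary datum will pin down $\Omega^* = B_1$.

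Next I would handle the boundary conditions \eqref{baozhang2}. On $\partial\Omega$ we have $u = 0$ and $\partial u/\partial\nu = 1$; since $u$ is convex and vanishes on $\partial\Omega$ with positive outer normal derivative, $Du(x) = \nu(x)$ for $x\in\partial\Omega$ (the tangential gradient vanishes because $u\equiv 0$ on $\partial\Omega$, and the normal component is $1$). Therefore $Du(\partial\Omega) = \{\nu(x) : x\in\partial\Omega\} \subseteq \partial B_1$, and in fact equals $\partial B_1$ by a degree/continuity argument using strict convexity, so $\Omega^* = B_1$. For $p = \nu(x) \in \partial B_1$ we then compute $u^*(p) = p\cdot x - u(x) = \nu(x)\cdot x - 0 = \nu(x)\cdot x$. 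On the other hand $Du^*(p) = x$, and the outer unit normal to $B_1$ at $p$ is $n(p) = p = \nu(x)$, so $\partial u^*/\partial n(p) = Du^*(p)\cdot n(p) = x\cdot\nu(x) = u^*(p)$. This is exactly the Robin (oblique) condition \eqref{fd6}.

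Finally, for the asymptotic behavior I would translate \eqref{fd4'} through the transform. The reference point is that the Legendre transform of the quadratic $q(x) = \frac12\sum x_i^2/a_i + c$ is $q^*(p) = \frac12\sum a_i p_i^2 - c$; since $u - q \to 0$ at infinity with the decay rate in \eqref{fd4'}, and $Du - Dq$ correspondingly decays, the images under the two gradient maps are asymptotic, and a routine estimate converting the decay in $x$ to decay in $p = Du(x) \sim Dq(x)$ yields \eqref{fd7}, i.e. $\limsup_{|p|\to\infty} |p|^{d-2}|u^*(p) - (\frac12\sum a_i p_i^2 - c)| < \infty$. The regularity claim $u^* \in C^2(\mathbb{R}^d\setminus B_1)$ follows since $Du : \mathbb{R}^d\setminus\bar\Omega \to \mathbb{R}^d\setminus\bar{B_1}$ is a $C^1$-diffeomorphism with $C^2$ inverse-type smoothness coming from $D^2u$ being positive definite and $C^0$ (bootstrapping off \eqref{baozhang3} gives more, but $C^2$ suffices here).

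The main obstacle I anticipate is not any single identity but the bookkeeping around the \emph{global} behavior of the Legendre transform: verifying that $Du$ is a bijection from $\mathbb{R}^d\setminus\bar\Omega$ onto $\mathbb{R}^d\setminus\bar{B_1}$ (properness at infinity, using \eqref{fd4'} to see $Du(x)\to\infty$ like $A x$; injectivity from strict convexity; and that the image of $\partial\Omega$ is precisely $\partial B_1$ rather than some larger or smaller set), and then carefully converting the quantitative decay rate in the $x$-variable of \eqref{fd4'} into the stated decay rate in the $p$-variable of \eqref{fd7}, since the change of variables $p = Du(x)$ is only asymptotically linear. These are the steps where one has to be genuinely careful rather than merely formal.
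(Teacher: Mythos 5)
Your proposal follows essentially the same route as the paper's proof: use $D^2u^* = (D^2u)^{-1}$ and the identity $\sigma_k(\mu^{-1}) = \sigma_{d-k}(\mu)/\sigma_d(\mu)$ to get \eqref{fd5}, use $Du|_{\partial\Omega}=\nu$ to get the Robin condition \eqref{fd6}, identify $Du(\mathbb{R}^d\setminus\bar\Omega)$ with $\mathbb{R}^d\setminus\bar B_1$, and push the asymptotics through the transform. Two points deserve attention. First, a slip: you write ``$\Omega^*=B_1$,'' but the image of the exterior domain under $Du$ is $\mathbb{R}^d\setminus\bar B_1$, as you yourself use later; the Robin condition is posed on the \emph{interior} boundary $\partial B_1$ of this set. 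Second, and more substantively, the step ``$Du-Dq$ correspondingly decays'' is not automatic from the $C^0$ decay \eqref{fd4'}: the difference $u-q$ is not convex, so no gradient bound for it follows from convexity plus pointwise decay alone, and one cannot a priori exclude a small, rapidly oscillating correction. The paper obtains the quantitative gradient asymptotics $D_i u(x)=x_i/a_i+O(|x|^{1-d})$ by citing Lemma 3.2 of \cite{BaoW}, which uses the equation \eqref{baozhang3} together with interior (Schauder/Pogorelov-type) estimates; some input of this kind is genuinely needed, and your proposal should flag that this is where the PDE, and not just convexity, enters.
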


\begin{proof}

We firstly prove that
\begin{equation}\label{dd}
\Omega^{*}=\mathbb{R}^d\verb"\"\bar{B}_{1}.
\end{equation}
Indeed, on one side, for any $p\in\Omega^{*}$, there exists $x=x(p)\in\mathbb{R}^d\texttt{\symbol{'134}}\bar{\Omega}$ such that $p=Du(x)$. By (\ref{baozhang2}) and the strictly convexity of $\Omega$, there exist $x_{0}\in\partial\Omega$ and $\xi_{0}\in\mathbb{R}^d\texttt{\symbol{'134}}\bar{\Omega}$ such that
  \begin{equation*}
  |p|=|Du(x)|\geq\frac{\partial u}{\partial \nu}(x)=\frac{\partial u}{\partial \nu}(x_{0})+\frac{\partial^{2} u}{\partial \nu^{2}}(\xi_{0})|x-x_{0}|>1.
   \end{equation*}
Thus, we have that
  \begin{equation}
  \Omega^{*}\subset \mathbb{R}^d\verb"\"\bar{B}_{1}.\label{cases:3}
   \end{equation}
  On the other side, for any $p\in\partial\Omega^{*}$,  there exists a sequence $\{p^{(m)}\}_{m=1}^{\infty}\subset\Omega^*$ such that $\lim\limits_{m\rightarrow\infty}p^{(m)}=p$. Let $x^{(m)}\in\RR^{d}\setminus\bar{\Omega}$ such that $p^{(m)}=Du(x^{(m)})$ for $m\in\NN^{+}$. By Lemma 3.2 in \cite{BaoW}, we have that, for $i=1$, $\cdots$, $d$, 
   \begin{equation}\label{Du}
   D_{i}u(x)=\frac{x_{i}}{a_{i}}+O\left(|x|^{1-d}\right),~~~~\mbox{as $|x|\rightarrow\infty$}.
     \end{equation}
It follows that $\{x^{(m)}\}_{m=1}^{\infty}$ is a bounded sequence in  $\RR^{d}\setminus\bar{\Omega}$. Thus, there exists some subsequence, which converges to some $x\in\RR^{d}\setminus\Omega$ and $p=Du(x)$.   If  $x\in\RR^{d}\setminus\bar{\Omega}$,  since $Du$ maps interior point to interior point, we know that $p$ is an interior point, which is a contradiction. Thus $x\in\partial\Omega$. By \eqref{baozhang2}, we can conclude that 
    \begin{equation}
  \partial\Omega^{*}\subset Du(\partial\Omega)\subset \partial B_{1}.\label{cases:4}
   \end{equation}
Then (\ref{cases:3}) and (\ref{cases:4}) yields (\ref{dd}).

We secondly prove that $u^{*}$ is a $C^{2}$ locally strictly convex solution of (\ref{fd5})-(\ref{fd6}). Indeed, by the definition of $u^{*}$ and (\ref{dd}), $u^{*}\in
C^{2}(\mathbb{R}^d\verb"\"B_{1})$ is a locally strictly convex function. Moreover, we have that $D^{2}u^{*}=(D^{2}u)^{-1}$ in $\RR^{d}\setminus\bar{B}_{1}$. Consequently, by (\ref{baozhang3}), we can conclude that $u^{*}$ satisfies (\ref{fd5}). By (\ref{baozhang2}), for any $p\in \partial B_{1}$, there exists $x=x(p)\in\partial\Omega$ such that $p=Du(x)$ and
\begin{equation*}
   u^{*}(p)=p\cdot x-u(x)=p\cdot Du^{*}(p)=\frac{\partial u^{*}}{\partial n}(p),
\end{equation*}
that is (\ref{fd6}).

We thirdly prove that $u^{*}$ satisfies (\ref{fd7}). By \eqref{Du}, we have that, for $i=1$, $\cdots$, $d$,
   \begin{equation*}
x_{i}=a_{i}p_{i}+O\left(|p|^{1-d}\right),~~~~\mbox{as $|p|\rightarrow\infty$},
\end{equation*}
where we have used the fact that $O(|x|^{-1})=O(|p|^{-1})$. It follows from (\ref{fd4'}) that 
  \begin{equation*}
 u^{*}(p)=p\cdot x-u(x)=\frac{1}{2}\sum\limits_{i=1}^{d}a_{i}p_{i}^{2}-c+O\left(|p|^{2-d}\right),~~~~\mbox{as }|p|\rightarrow\infty,\label{cases:5}
   \end{equation*}
that is (\ref{fd7}).

\end{proof}

\section{The Sub and Super Solutions}

In this section, for any vector $a=(a_1,\cdots,a_d)\in\mathbb{R}^d$ with $a_i>0$, $i=1$, $\cdots$, $d$, we will construct the strictly convex sub and super solutions of the equation
\begin{equation}\label{fd5-------}
\sigma_{d}(\lambda(D^{2}u^{*}))/\sigma_{d-k}(\lambda(D^{2}u^{*}))=\sigma_d(a)/\sigma_{d-k}(a)~~~~\mbox{in }\mathbb{R}^d\texttt{\symbol{'134}}\bar{B}_{1}
\end{equation}
with the boundary condition (\ref{fd6}) and the following prescribed asymptotic behavior at infinity:
\begin{equation}\label{fd7'}
\limsup\limits_{|p|\rightarrow\infty}|p|^{d_k^*(a)-2}\left|u^{*}(p)-\left(\frac{1}{2}\sum\limits_{i=1}^{d}a_{i}p_{i}^{2}-c\right)\right|<\infty.
\end{equation}

We construct the subsolutions of the form
\begin{equation*}
\underline{\omega}(p)=\Phi(r),~~~~r:=\sqrt{\sum\limits_{i=1}^{d}a_{i}p_{i}^{2}}.
\end{equation*}
A straightforward calculation yields that for $i$, $j=1$, $\cdots$, $d$,
\begin{align*}
D_{ij}\underline{\omega}&=a_{i}r^{-1}\Phi'\delta_{ij}+r^{-2}(\Phi''-r^{-1}\Phi')(a_{i}p_{i})(a_{j}p_{j})\\
&=a_{i}h\delta_{ij}+r^{-1}h'(a_{i}p_{i})(a_{j}p_{j}),
\end{align*}
where $h:=\Phi'/r$. Denote $l=d-k$. By Proposition 1.2 in \cite{baolili}, we have that
\begin{align*}
\frac{\sigma_{d}(\lambda(D^{2}\underline{\omega}))}{\sigma_{l}(\lambda(D^{2}\underline{\omega}))}&=\frac{\sigma_{d}(a)}{\sigma_{l}(a)}\frac{h^{d}+r^{-1}h'h^{d-1}\sum\limits_{i=1}^{d}\frac{\sigma_{d-1;i}(a)a_{i}}{\sigma_{d}(a)}(a_{i}p_{i}^{2})}{h^{l}+r^{-1}h'h^{l-1}\sum\limits_{i=1}^{d}\frac{\sigma_{l-1;i}(a)a_{i}}{\sigma_{l}(a)}(a_{i}p_{i}^{2})}\\
&=\frac{\sigma_{d}(a)}{\sigma_{l}(a)}\frac{h^{d}+rh'h^{d-1}}{h^{l}+r^{-1}h'h^{l-1}\sum\limits_{i=1}^{d}\frac{\sigma_{l-1;i}(a)a_{i}}{\sigma_{l}(a)}(a_{i}p_{i}^{2})}.
\end{align*}
Let $\underline{t}_{0}(a):=0$ and 
\begin{equation*}
\underline{t}_{l}(a):=\min\limits_{1\leq i\leq d}\frac{\sigma_{l-1;i}(a)a_{i}}{\sigma_{l}(a)},~~~~l=1,\cdots,d-1.
\end{equation*}
Note that $0\leq\underline{t}_{l}(a)\leq\frac{l}{d}<1$. We will consider the ODE
\begin{equation}\label{chenglu'}
\frac{h^{d}+rh'h^{d-1}}{h^{l}+\underline{t}_{l}(a)rh'h^{l-1}}=1.
\end{equation}
We rewrite the above equation into the following divergence form:
\begin{equation}\label{chenglu}
\left(r^{\frac{d-l}{1-\underline{t}_{l}(a)}}h^{\frac{d-l}{1-\underline{t}_{l}(a)}}\right)'=\left(r^{\frac{d-l}{1-\underline{t}_{l}(a)}}h^{\frac{d-l}{1-\underline{t}_{l}(a)}\underline{t}_{l}(a)}\right)'.
\end{equation}
It is easy to see that 
\begin{equation}\label{doujiali}
d^*_k(a)=\frac{d-l}{1-\underline{t}_{l}(a)}.
\end{equation}
Indeed, let $\lambda_{i}:=1/a_{i}$, $i=1$, $\cdots$, $d$. We use $1/\lambda$ to denote the vector $a$. Then we have that 
\begin{equation*}
\underline{t}_{l}(a)=\min\limits_{1\leq i\leq d}\frac{\sigma_{l-1;i}(\frac{1}{\lambda})\frac{1}{\lambda_{i}}}{\sigma_{l}(\frac{1}{\lambda})}=\min\limits_{1\leq i\leq d}\frac{\sigma_{k;i}(\lambda)}{\sigma_{d-1;i}(\lambda)\lambda_{i}}\frac{\sigma_{d}(\lambda)}{\sigma_{k}(\lambda)}=\min\limits_{1\leq i\leq d}\frac{\sigma_{k;i}(\lambda)}{\sigma_{k}(\lambda)}.
\end{equation*}
It follows that 
\begin{equation*}
1-\underline{t}_{l}(a)=\max\limits_{1\leq i\leq d}\left(1-\frac{\sigma_{k;i}(\lambda)}{\sigma_{k}(\lambda)}\right)=\max\limits_{1\leq i\leq d}\frac{\sigma_{k-1;i}(\lambda)\lambda_{i}}{\sigma_{k}(\lambda)}=\frac{k}{d_{k}^{*}(a)},
\end{equation*}
that is (\ref{doujiali}).

By (\ref{doujiali}), the equation (\ref{chenglu}) can be written as 
\begin{equation*}
\left(r^{d^*_k(a)}h^{d^*_k(a)}\right)'=\left(r^{d^*_k(a)}h^{d^*_k(a)\underline{t}_{l}(a)}\right)'.
\end{equation*}
Integrating the above equation and dividing by $r^{-d_{k}^{*}(a)}$, we have that
 \begin{equation}\label{xi}
\xi(h):=h^{d^*_k(a)}-h^{d^*_k(a)\underline{t}_{l}(a)}=C_{1}r^{-d^*_k(a)},
\end{equation}
where $C_{1}$ is an arbitrary constant. It is easy to see that $\xi$ is  strictly decreasing on $\left[0,\left(\underline{t}_{l}(a)\right)^{\frac{1}{d-l}}\right]$ and strictly increasing on $\left[\left(\underline{t}_{l}(a)\right)^{\frac{1}{d-l}},\infty\right)$.

We take $C_{1}\geq0$. Since $\xi(0)=0$ and $\xi\rightarrow \infty$ as $h\rightarrow\infty$, then we can solve $h$ from (\ref{xi}) as 
$$h(r)=\xi^{-1}\left(C_{1}r^{-d^*_k(a)}\right),$$ where $\xi^{-1}$ denotes the inverse of $\xi$ on $\left[\left(\underline{t}_{l}(a)\right)^{\frac{1}{d-l}},\infty\right)$. Moreover, $h\geq1$ and $h'\leq0$. Recall that $h=\Phi'/r$. Let 
\begin{equation}\label{w}
\underline{\omega}(p):=\Phi(r,\eta)=\int^{r}_{\eta}s\xi^{-1}\left(C_{1}s^{-d^*_k(a)}\right)ds+\eta^2\xi^{-1}(C_1\eta^{-d^*_k(a)}),
\end{equation}
where $\eta=\min\limits_{1\leq i\leq d}\sqrt{a}_{i}$.

Now we prove that $\underline{\omega}$ is a strictly convex subsolution of (\ref{fd5-------}) satisfying (\ref{fd6}).
By \eqref{chenglu'}, $0\leq\underline{t}_{l}(a)<1$ and $h\geq1$, we have that 
\begin{equation}\label{shengguo}
h'=-\frac{h}{r}\frac{h^{d-1}-h^{l-1}}{h^{d-1}-\underline{t}_{l}(a)h^{l-1}}>-\frac{h}{r}.
\end{equation}
It follows from the above inequality, $0\leq \underline{t}_{l}(a)<1$, $h\geq1$ and $h'\leq0$ that 
\begin{align*}
\sigma_{j}\left(\lambda(D^{2}\underline{\omega})\right)&=\sigma_{j}(a)\left(h^{j}+r^{-1}h'h^{j-1}\sum\limits_{i=1}^{d}\frac{\sigma_{j-1;i}(a)a_{i}}{\sigma_{j}(a)}(a_{i}p_{i}^{2})\right)\\
&\geq \sigma_{j}(a)(h^{j}+\underline{t}_{j}(a)rh'h^{j-1})\\
&\geq\sigma_{j}(a)h^{j-1}(h+rh')>0,~~~~j=1,\cdots,d,
\end{align*}
and
\begin{eqnarray*}
\frac{\sigma_{d}(\lambda(D^{2}\underline{\omega}))}{\sigma_{l}(\lambda(D^{2}\underline{\omega}))}&=&\frac{\sigma_{d}(a)}{\sigma_{l}(a)}\frac{h^{d}+r^{-1}h'h^{d-1}\sum\limits_{i=1}^{d}\frac{\sigma_{d-1;i}(a)a_{i}}{\sigma_{d}(a)}(a_{i}p_{i}^{2})}{h^{l}+r^{-1}h'h^{l-1}\sum\limits_{i=1}^{d}\frac{\sigma_{l-1;i}(a)a_{i}}{\sigma_{l}(a)}(a_{i}p_{i}^{2})}\\
&\geq & \frac{\sigma_{d}(a)}{\sigma_{l}(a)}\frac{h^{d}+rh'h^{d-1}}{h^{l}+\underline{t}_{l}(a)rh'h^{l-1}}=\frac{\sigma_{d}(a)}{\sigma_{l}(a)}.
\end{eqnarray*}
On the ellipse $E_{\eta}=\{p\in\RR^{d}:\sum\limits_{i=1}^{d}a_ip_i^2=\eta^2\}$, we have that
\begin{equation*}
\underline{\omega}-p\cdot D\underline{\omega}=\Phi-r\Phi'=0.
\end{equation*}
Since 
$$(\Phi-r\Phi')'=-r\Phi''=-r(h+rh')< 0,~~~~\forall~r\geq \eta,$$
in view of $E_{\eta}\subset B_1$, we have that
$$\underline{\omega}-\partial\underline{\omega}/\partial n\leq 0\ \ \ \text{ on } \partial B_1,$$
which implies that $\underline{\omega}$ is a strictly convex subsolution of (\ref{fd5-------}) and (\ref{fd6}).

To discuss the asymptotic behavior, we rewrite \eqref{w} as
\begin{equation}\label{sub}
\underline{\omega}(p)=\frac{1}{2}\sum\limits_{i=1}^{d}a_{i}p_{i}^{2}+\mu(C_{1})-\int_r^{\infty}s\left[\xi^{-1}(C_{1}s^{-d^*_k(a)})-1\right] ds,
\end{equation}
where
\begin{equation*}
\mu(C_{1}):=\int_{\eta}^{\infty}s\left[\xi^{-1}(C_{1}s^{-d^*_k(a)})-1\right] ds-\frac{\eta^2}{2}+\eta^2\xi^{-1}(C_1\eta^{-d^*_k(a)}).
\end{equation*}
Since $\xi^{-1}(0)=1$, we have that as $s\rightarrow \infty$,
$$\xi^{-1}(C_{1}s^{-d^*_k(a)})-1=\xi^{-1}(C_{1}s^{-d^*_k(a)})-\xi^{-1}(0)=O(s^{-d^*_k(a)}).$$
Thus, if $d^*_k(a)>2$, we have that as $r\rightarrow \infty$,
$$\int_r^{\infty}s\left[\xi^{-1}(C_{1}s^{-d^*_k(a)})-1\right] ds=O(r^{2-d^*_k(a)}).$$ 
Inserting the above equality into (\ref{sub}), we can obtain the asymptotic behavior of $\underline{\omega}$ at infinity:
\begin{equation}\label{asyw}
\underline{\omega}(p)=\frac{1}{2}\sum\limits_{i=1}^{d}a_{i}p_{i}^{2}+\mu(C_{1})+O(|p|^{2-d^*_k(a)}),~~~~\mbox{as }p\rightarrow\infty.
\end{equation}
Note that the condition $d_k^*(a)>2$ is equivalent to 
$$\underline{t}_{d-k}(a)>-(k-2)/2.$$
For $k\geq 2$, it obviously holds. For $k=1$,  the above inequality becomes
\begin{equation*}
\sigma_{d-1;i}(a)<\frac{1}{2}\sigma_{d-1}(a),~~~~i=1,\cdots, d.
\end{equation*}

Note that $\mu(C_{1})$ is strictly increasing with respect to $C_{1}$, $\mu(0)=\eta^2/2$ and $\lim\limits_{C_{1}\rightarrow\infty}\mu(C_{1})=\infty$. Then we have that the range of $\mu$ for $C_1\geq 0$ is $\left[\frac{\eta^2}{2},\infty\right)$ . Therefore, we need to require that $-c\geq \eta^2/2=\frac{1}{2}\left(\min\limits_{1\leq i\leq d}a_{i}\right)$.

The super solutions can be taken directly as
\begin{equation}\label{sup}
\bar{\omega}(p)=\frac{1}{2}\sum\limits_{i=1}^{d}a_{i}p_{i}^{2}-c,
\end{equation}
where $-c\geq\frac{1}{2}\left(\max\limits_{1\leq i\leq d}a_{i}\right)$. Indeed, it is easy to see that $\bar{\omega}$ is a strictly convex solution of (\ref{fd5-------}) satisfying (\ref{fd7'}). Moreover, we have that
\begin{equation*}
\bar{\omega}-\partial\bar{\omega}/\partial n=-c-\frac{1}{2}\sum\limits_{i=1}^{d}a_{i}p_{i}^{2}\geq -c-\frac{1}{2}\left(\max\limits_{1\leq i\leq d}a_{i}\right)\geq 0~~~~\mbox{on }\partial B_{1}.
\end{equation*}

\section{The Approximate Problem}

For $0\leq t\leq 1$, let $a(t)$ be defined as in (\ref{at}) and let $\underline{\omega}_t$ be constructed as in \eqref{w} with $a$ replaced by $a(t)$. 

In this section, our goal is to solve (\ref{fd5a})-(\ref{fd7a}) for $t=1$. Since the problem is obviously solvable for $t=0$,  by the continuity method, we need to establish the closedness and the openness of (\ref{fd5a})-(\ref{fd7a}).

The closedness can be obtained by Lemma \ref{c0}-\ref{c2} below.

 \begin{lem}\label{c0}
 Let $R>1$. There exists a constant $C>0$ depending on $R$, $a$ and $c$ such that for any $0\leq t\leq 1$ and any $u_{R,t}^{*}\in C^{2}(\bar{B}_{R}\setminus B_{1})$ satisfying \eqref{fd5a}-\eqref{fd7a}, we have that 
 \begin{equation}\label{c0estimate}
 |u_{R,t}^{*}|\leq C~~~~\mbox{on }\bar{B}_{R}\setminus B_{1}.
 \end{equation}
  \end{lem}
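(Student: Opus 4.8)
The plan is to establish a uniform $C^0$ bound for solutions $u_{R,t}^*$ of the approximate problem \eqref{fd5a}-\eqref{fd7a} by sandwiching $u_{R,t}^*$ between two barriers built from the sub- and super-solutions of Section 3, using the comparison principle for the Hessian quotient operator. First I would recall from Section 3 that the super-solution $\bar\omega_t(p) = \frac12\sum_i a_i(t) p_i^2 - c$ satisfies the equation \eqref{fd5-------} (with $a$ replaced by $a(t)$) with equality, satisfies $\bar\omega_t - \partial\bar\omega_t/\partial n \geq 0$ on $\partial B_1$ provided $-c \geq \frac12\max_i a_i(t)$, and that the subsolution $\underline\omega_t$ from \eqref{w} is a strictly convex subsolution with $\underline\omega_t - \partial\underline\omega_t/\partial n \leq 0$ on $\partial B_1$. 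Note that for $t\in[0,1]$ the eigenvalues $a_i(t) = ta_i + (1-t)$ lie in the compact interval between $\min\{1,\min_i a_i\}$ and $\max\{1,\max_i a_i\}$, so the admissibility requirement $-c \geq \frac12\max_i a_i(t)$ holds uniformly in $t$ under the standing hypothesis; likewise $d_k^*(a(t)) > 2$ uniformly, so the asymptotics \eqref{asyw} are uniform.

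\medskip

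For the upper bound: I would compare $u_{R,t}^*$ with $\bar\omega_t$. On $\partial B_R$ we have $u_{R,t}^* = \underline\omega_t \leq \bar\omega_t$ since $\underline\omega_t$ is a subsolution and $\bar\omega_t$ a supersolution agreeing in leading order with $\bar\omega_t$ having the larger additive constant $\mu(C_1)\le -c$ built in (more directly, $\underline\omega_t \le \bar\omega_t$ follows from \eqref{sub} because the integral term is nonnegative and $\mu(C_1)\le -c$). The operator $F := \sigma_d/\sigma_{d-k}$ is elliptic and concave on the positive cone where both functions are admissible, so the maximum principle applies: if $u_{R,t}^* - \bar\omega_t$ attained a positive interior maximum, ellipticity of $F$ along the segment joining $D^2 u_{R,t}^*$ and $D^2\bar\omega_t$ would force $F(\lambda(D^2 u_{R,t}^*)) < F(\lambda(D^2\bar\omega_t))$ there, contradicting that both equal $\sigma_d(a(t))/\sigma_{d-k}(a(t))$; on $\partial B_1$ the Robin condition \eqref{fd6a} gives $(u_{R,t}^*-\bar\omega_t) - \partial(u_{R,t}^*-\bar\omega_t)/\partial n = (1-t)\psi_t - (\bar\omega_t - \partial\bar\omega_t/\partial n) \le (1-t)\psi_t$, and since $\psi_t = \underline\omega_t - \partial\underline\omega_t/\partial n \le 0$ this Robin quantity is $\le 0$, so by Hopf's lemma a boundary maximum is also excluded. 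Hence $u_{R,t}^* \le \bar\omega_t \le -c + \frac12\max_i a_i(t) R^2$ on $\bar B_R\setminus B_1$. For the lower bound, symmetrically I would compare with $\underline\omega_t$: on $\partial B_R$ equality holds, on $\partial B_1$ the Robin quantities match by the choice \eqref{fd6a}-\eqref{psit}, and the subsolution inequality $F(\lambda(D^2\underline\omega_t)) \ge \sigma_d(a(t))/\sigma_{d-k}(a(t)) = F(\lambda(D^2 u_{R,t}^*))$ combined with the comparison principle yields $u_{R,t}^* \ge \underline\omega_t \ge -c$ (the last inequality from \eqref{sub}, since $\mu(C_1)\ge\eta^2/2\ge 0$… more conservatively $\underline\omega_t$ is bounded below on $\bar B_R$ by a constant depending only on $R,a,c$).

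\medskip

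Combining the two one-sided bounds gives $|u_{R,t}^*| \le C(R,a,c)$ as claimed, with $C$ explicitly of the form $|c| + \frac12\max\{1,\max_i a_i\}R^2$ plus a harmless constant from $\underline\omega_t$. The main obstacle I anticipate is the careful handling of the boundary term on $\partial B_1$: one must verify that the comparison principle for the fully nonlinear operator $F = \sigma_d/\sigma_{d-k}$ genuinely holds with the oblique (Robin) boundary condition $v - \partial v/\partial n = g$, i.e. that the difference of an admissible subsolution and an admissible supersolution satisfying the appropriate boundary inequality cannot have an interior or boundary maximum. This requires that $u_{R,t}^*$ stays in the admissible cone $\Gamma_d$ (automatic here since $u_{R,t}^*$ is assumed $C^2$ and convex, as it is a solution of the quotient equation with positive right-hand side forcing $D^2u_{R,t}^*>0$), and a standard Hopf-lemma argument at $\partial B_1$ using that the normal $n = p$ is the outer normal of $B_1$ pointing into the domain $B_R\setminus\bar B_1$ with the wrong sign — so one should be mindful of the sign convention: $n$ here points toward the origin relative to the exterior domain, and the inequality $v-\partial v/\partial n \le 0$ is precisely what prevents the subsolution-type boundary maximum. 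A clean way to organize this is to invoke the comparison principle in the form already implicit in \cite{baolili} for the Hessian quotient equation, adapted to the oblique boundary condition, rather than reproving it from scratch.
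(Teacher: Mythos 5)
Your proposal takes essentially the same approach as the paper: the paper's proof is the one-line statement that the maximum principle, applied with the sub- and super-solutions $\underline{\omega}_t$ and $\bar{\omega}_t$ of Section~3, gives $\underline{\omega}_t\le u^*_{R,t}\le\bar{\omega}_t$, which is exactly the sandwich you spell out in detail (including the Hopf-lemma argument at the Robin boundary $\partial B_1$). One small inaccuracy: for the lower bound you say ``on $\partial B_1$ the Robin quantities match by the choice \eqref{fd6a}--\eqref{psit},'' but they do not match for $t\ne 0$; rather $\underline{\omega}_t-\partial\underline{\omega}_t/\partial n=\psi_t$ while $u^*_{R,t}-\partial u^*_{R,t}/\partial n=(1-t)\psi_t$, so their difference has Robin value $t\psi_t\le 0$, which is still of the correct sign to run the boundary-point argument, so the conclusion stands.
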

  
  \begin{proof}
  Let $\underline{\omega}_t$ and $\bar{\omega}_t$ be the sub and super solution constructed as in \eqref{w} and \eqref{sup} with $a$ replaced by $a(t)$. Then, by the maximum principle, we have that 
   \begin{equation*}
 \underline{\omega}_t\leq u^{*}_{R,t}\leq \bar{\omega}_t~~~~\mbox{on }\bar{B}_{R}\setminus B_{1}.
 \end{equation*}
 (\ref{c0estimate}) follows from the above inequality directly.
 \end{proof}

 In the following, we will derive the estimates of $Du_{R,t}^{*}$ and $D^{2}u^{*}_{R,t}$ on $\bar{B}_{R}\setminus B_{1}$, respectively.

 \begin{lem}\label{c1}
 Let $R>1$. There exists a constant $C>0$ depending on $R$, $a$ and $c$ such that for any $0\leq t\leq 1$ and any $u_{R,t}^{*}\in C^{2}(\bar{B}_{R}\setminus B_{1})$ satisfying \eqref{fd5a}-\eqref{fd7a}, we have that 
 \begin{equation*}
 |Du_{R,t}^{*}|\leq C~~~~\mbox{on }\bar{B}_{R}\setminus B_{1}.
 \end{equation*}
\end{lem}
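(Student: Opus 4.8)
The plan is to obtain a gradient bound for $u^*_{R,t}$ in two pieces: first an estimate on the interior boundary $\partial B_1$, using the Robin condition \eqref{fd6a} together with the $C^0$ bound of Lemma \ref{c0}; and then a global bound on $\bar B_R\setminus B_1$, exploiting convexity to reduce the gradient estimate to the boundary values of the gradient. Since $u^*_{R,t}$ is strictly convex, on any segment joining $\partial B_1$ to $\partial B_R$ the normal derivative is monotone, so $|Du^*_{R,t}|$ attains its maximum on $\partial B_1\cup\partial B_R$; hence it suffices to bound $|Du^*_{R,t}|$ on each of the two boundary components.

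On $\partial B_R$ the Dirichlet data is $u^*_{R,t}=\underline\omega_t$, a fixed smooth strictly convex function depending only on $a$ and $c$ (via $t$), so the tangential derivative of $u^*_{R,t}$ there equals the tangential derivative of $\underline\omega_t$, which is bounded by a constant depending on $R$, $a$, $c$. For the normal derivative on $\partial B_R$, I would construct suitable local barriers: using $\bar\omega_t$ (which is a supersolution and agrees with the data from above in the sense $\underline\omega_t\le u^*_{R,t}\le\bar\omega_t$) together with the fact that $\underline\omega_t\le u^*_{R,t}$, one pinches the normal derivative of $u^*_{R,t}$ on $\partial B_R$ between $\partial\underline\omega_t/\partial n$ and $\partial\bar\omega_t/\partial n$, both of which are controlled. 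On $\partial B_1$ I would use the Robin condition $u^*_{R,t}=\partial u^*_{R,t}/\partial n+(1-t)\psi_t$: the left side is bounded by Lemma \ref{c0} and $\psi_t$ is a fixed bounded function, so the radial derivative $p\cdot Du^*_{R,t}$ is bounded on $\partial B_1$; combined with a bound on the tangential gradient on $\partial B_1$ — again obtained by comparing with the barriers $\underline\omega_t,\bar\omega_t$ which touch $u^*_{R,t}$ only from one side, or alternatively via the standard tangential-gradient estimate for oblique problems — this yields $|Du^*_{R,t}|\le C$ on $\partial B_1$.

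The one genuinely delicate point is the tangential gradient estimate on $\partial B_1$: unlike the Dirichlet component $\partial B_R$, one cannot simply differentiate given data. The standard device is to differentiate the oblique boundary condition in a tangential direction and use the equation \eqref{fd5a} (which is a concave, or at least suitably structured, Hessian quotient operator) to control the resulting second-order term, or to build a barrier of the form $\underline\omega_t\pm$ (a constant multiple of a boundary-localized auxiliary function) that respects the Robin condition. I would therefore carry out the steps in this order: (1) record $\underline\omega_t\le u^*_{R,t}\le\bar\omega_t$ and extract the $C^0$ bound; (2) bound $|Du^*_{R,t}|$ on $\partial B_R$ via the Dirichlet data and barrier comparison for the normal derivative; (3) bound the radial derivative on $\partial B_1$ directly from the Robin condition and Lemma \ref{c0}; (4) bound the tangential gradient on $\partial B_1$ by the barrier/differentiated-obliqueness argument; (5) conclude by convexity that the maximum of $|Du^*_{R,t}|$ over $\bar B_R\setminus B_1$ is attained on the boundary. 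The main obstacle is step (4), since it is the only place where the oblique (rather than Dirichlet) nature of the inner condition must be confronted and where one must genuinely use the structure of the Hessian quotient operator rather than just comparison with the explicit barriers.
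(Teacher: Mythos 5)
Your outline reproduces the paper's overall decomposition --- $C^0$ bound, reduction of the gradient bound to the boundary, radial derivative on $\partial B_1$ from the Robin condition, Dirichlet data on $\partial B_R$, and then the tangential estimate on $\partial B_1$ as the remaining and delicate point --- and steps (1), (2), (3), (5) match the paper closely. The genuine gap is exactly where you flag it: step (4), the tangential gradient estimate on $\partial B_1$. You say one can ``differentiate the oblique condition tangentially and use the concavity of the operator to control the resulting second-order term,'' but that is not what happens for \emph{first} derivatives, and as written the step does not go through.

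The mechanism the paper actually uses is more specific and has two ingredients you did not supply. First, one differentiates the equation with the rotation (angular) vector fields $T_{ij}=p_i\partial_{p_j}-p_j\partial_{p_i}$; because the Hessian-quotient operator has no explicit $p$-dependence and is invariant under rotations, $T u^{*}_{R,t}$ satisfies the \emph{homogeneous} linearized equation $F^{ij}(T u^{*}_{R,t})_{ij}=0$, with no extra lower-order or second-order terms to absorb, so the maximum principle applies directly (concavity plays no role here; it is only needed for the second-derivative estimate in Lemma \ref{c2}). Second, and crucially, the commutation identity $T(p\cdot Dv)=p\cdot D(Tv)$, i.e.\ $[T,\,p\cdot D]=0$, lets one differentiate the Robin condition \eqref{fd6a} tangentially and obtain a clean scalar boundary relation $T u^{*}_{R,t}=\partial_n(T u^{*}_{R,t})+(1-t)T\psi_t$ on $\partial B_1$. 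Then, at an interior-of-boundary maximum point $p_0\in\partial B_1$ of $T u^{*}_{R,t}$, the Hopf-type inequality $(T u^{*}_{R,t})_n(p_0)\le 0$ plus this relation yields $T u^{*}_{R,t}(p_0)\le (1-t)T\psi_t(p_0)$, and similarly at a minimum; combined with the Dirichlet data on $\partial B_R$ this closes the estimate. Without the commutation identity you have no usable boundary condition for $T u^{*}_{R,t}$, and without the rotation invariance you do not know that $T u^{*}_{R,t}$ solves a clean elliptic equation; these are the two facts your sketch would need to make precise.
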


 \begin{proof}
 By the strict convexity of $u^{*}_{R,t}$, we have that
 \begin{equation*}
 \max\limits_{\bar{B}_R\setminus B_{1}}|Du_{R,t}^{*}|=\max\limits_{\partial B_R\cup\partial B_{1}}|Du_{R,t}^{*}|.
 \end{equation*}
  By (\ref{fd6a}) and Lemma \ref{c0}, we have that
 \begin{equation*}
 |\partial u^{*}_{R,t}/\partial n|=|u^{*}_{R,t}-(1-t)\psi_t|\leq \max\limits_{\partial B_{1}}|\underline{\omega}_t|+\max\limits_{\partial B_{1}}|\bar{\omega}_t|+\max\limits_{\partial B_{1}}|\psi_t|~~~~\mbox{on }\partial B_{1}.
 \end{equation*}
 By the maximum principle, Lemma \ref{c0} and the above estimate, we have that  
 \begin{equation*}
 |Du^{*}_{R,t}|\leq |D\underline{\omega}_t|~~~~\mbox{on }\partial B_R.
 \end{equation*}

Now it only remains to estimate the derivatives of $u_{R,t}^{*}$ along any tangential vector field on $\partial B_{1}$. 
Suppose  that $T_{ij}:=p_i\frac{\partial}{\partial p_j}-p_j\frac{\partial}{\partial p_i}$ are the angular derivatives for $i\neq j$. Since any tangential vector on $\partial B_1$ can be expressed as a linear combination of $T_{ij}|_{\partial B_1}$ with bounded coefficients, we only need to estimate $T_{ij}u_{R,t}^*$.  Without loss of generality, we only consider $T:=T_{12}$. 

By Lemma  2.1 in \cite{ITW}, differentiating (\ref{fd5a}) with respect to $T$, we have that
 \begin{equation*}
F^{ij}(D^{2}u_{R,t}^{*})(Tu_{R,t}^{*})_{ij}=0~~~~\mbox{in }B_{R}\setminus\bar{B}_{1},
 \end{equation*}
where $F$ and $F^{ij}$ are defined as for any $d\times d$ real symmetric matrix $M=(M_{ij})$,
  \begin{eqnarray}\label{F}
  F(M):=\left(\frac{\sigma_{d}(\lambda(M)}{\sigma_l(\lambda(M))}\right)^{\frac{1}{d-l}} \text{    and    }F^{ij}(M):=\frac{\partial F}{\partial M_{ij}},~~i,j=1,\cdots,d.
  \end{eqnarray}
By the maximum principle, we have that
 \begin{equation*}
 |Tu_{R,t}^{*}|\leq \max\limits_{{\partial B_R}\cup\partial B_{1}}|Tu_{R,t}^{*}|~~~~\mbox{on }\bar{B}_{R}\setminus B_{1}.
 \end{equation*}
By \eqref{fd7a}, we have that
 \begin{equation}\label{chou1'}
 Tu_{R,t}^{*}=T\underline{\omega}_t~~~~\mbox{on }\partial B_{R}.
 \end{equation}
A straightforward calculation shows that for any $C^{2}$ function $v$, 
$$p\cdot DTv=T(p\cdot Dv).$$
Taking $v=u^*_{R,t}$, we have that 
\begin{eqnarray}\label{comm}
\frac{\partial }{\partial n}Tu^*_{R,t}=T\frac{\partial u_{R,t}^*}{\partial n}\ \ ~~~~\mbox{on }\partial B_{1}.
\end{eqnarray}

 If there exists $p_{0}\in\partial B_{1}$ such that $Tu_{R,t}^{*}(p_{0})=\max\limits_{\partial B_{R}\cup\partial B_{1}}Tu_{R,t}^{*}$, then we have that $(Tu_{R,t}^{*})_{n}(p_{0})\leq0$. It follows that
 \begin{eqnarray}\label{chou2}
 Tu_{R,t}^{*}(p_{0})&=&T((u_{R,t}^{*})_{n}+(1-t)\psi_t)(p_{0})\\
& =& (Tu_{R,t}^{*})_{n}(p_{0})+(1-t)T\psi_t(p_{0})\nonumber\\
&\leq& 2\max\limits_{\partial B_{1}}|D\psi_t|,\nonumber
 \end{eqnarray}
 where we have used \eqref{comm} in the second equality. 
 Analogously, if there exists $p_{0}\in\partial B_{1}$ such that $Tu_{R,t}^{*}(p_{0})=\min\limits_{\partial B_{R}\cup\partial B_{1}}Tu_{R,t}^{*}$, then we have that $(Tu_{R,t}^{*})_{n}(p_{0})\geq0$ and
 \begin{equation}\label{chou3}
 Tu_{R,t}^{*}(p_{0})\geq-2\max\limits_{\partial B_{1}}|D\psi_t|.
 \end{equation}
 Combining (\ref{chou1'}), (\ref{chou2}), (\ref{chou3}) together, we can obtain the desired estimate. 
 \end{proof}

  \begin{lem}\label{c2}
 Let $u^{*}_{R,t}$ be the solution of \eqref{fd5a}-\eqref{fd7a}. Then there exists some constant $C>0$ such that
 \begin{equation*}
 |D^{2}u_{R,t}^{*}|\leq C~~~~\mbox{on }\bar{B}_{R}\setminus B_{1},
 \end{equation*}
 when $R$ is sufficiently large. 
 \end{lem}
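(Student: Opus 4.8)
The plan is to prove the global $C^2$ bound for $u^*_{R,t}$ by separately estimating the Hessian on the two boundary components $\partial B_1$ and $\partial B_R$, and then invoking a maximum-principle argument in the interior. Since $F$ in \eqref{F} is a concave (after the $1/(d-l)$ power) elliptic operator on the positive cone, and since we already have the $C^0$ and $C^1$ estimates from Lemmas \ref{c0} and \ref{c1}, the standard machinery of Caffarelli-Nirenberg-Spruck / Ivochkina-Trudinger-Wang reduces the interior $C^2$ bound to a bound on the full Hessian on $\partial(B_R\setminus\bar B_1)$. So the work is entirely at the boundary.

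First I would treat $\partial B_R$. There $u^*_{R,t}=\underline\omega_t$ by \eqref{fd7a}, and $\underline\omega_t$ is a fixed smooth radial function (for fixed $R$), so the tangential-tangential second derivatives of $u^*_{R,t}$ on $\partial B_R$ are prescribed and bounded. The tangential-normal and normal-normal derivatives are then controlled in the usual way: the tangential-normal ones via differentiating the equation and the barrier construction of ITW, and the double-normal derivative by using the equation itself (the ellipticity gives a lower bound for $u^*_{nn}$ once all the other entries are bounded, and concavity plus the subsolution gives the upper bound). The key point is that all constants here may depend on $R$, which is allowed.

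Next, and this is where the main difficulty lies, I would estimate the Hessian on the Robin boundary $\partial B_1$, where \eqref{fd6a} holds: $u^*_{R,t}=\partial u^*_{R,t}/\partial n+(1-t)\psi_t$. For an oblique/Robin condition one cannot directly prescribe tangential data; instead one differentiates the boundary relation twice along tangential directions and uses the commutation identity $\partial_n(Tu^*)=T(\partial_n u^*)$ (already established in \eqref{comm}) to relate the second tangential derivatives of $u^*$ on $\partial B_1$ to its normal derivative of the Hessian, which in turn is controlled by differentiating the PDE and a barrier. Concretely, I expect to apply the boundary $C^2$ estimate for Hessian-quotient equations under oblique boundary conditions — as developed in the work on the Neumann/oblique problem for Hessian(-quotient) equations — where the Robin term $u^*-\partial_n u^*$ being controlled (here it equals $(1-t)\psi_t$, a fixed bounded function) plays the role of the boundary datum. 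The double-tangential derivatives on $\partial B_1$ are handled by the combination of the obliqueness, the $C^1$ bound, and a carefully chosen auxiliary function built from the distance to $\partial B_1$ and the angular derivatives $T_{ij}$; the mixed and double-normal directions then follow as on $\partial B_R$ using ellipticity and concavity of $F$.

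Finally, with $|D^2 u^*_{R,t}|$ bounded on both boundary pieces, I would apply the interior second-derivative estimate for the concave operator $F$ (using that $u^*_{R,t}$ lies strictly inside the positive cone, with uniform lower bound on $\sigma_d/\sigma_l$ coming from the fixed right-hand side $\sigma_d(a(t))/\sigma_{d-l}(a(t))$ which is bounded above and below for $t\in[0,1]$) to propagate the bound to all of $\bar B_R\setminus B_1$. The hardest step is the double-tangential estimate on the Robin boundary $\partial B_1$: obliqueness problems do not supply prescribed second-order tangential data, so one must exploit the special structure of the boundary sphere (so that $T_{ij}$ are tangential and the obliqueness vector is exactly the position vector $n=p$), the commutation identity, and the smallness/fixed nature of the term $(1-t)\psi_t$. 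I would also need to check that the requirement "$R$ sufficiently large" enters only to guarantee the subsolution $\underline\omega_t$ genuinely lies below $u^*_{R,t}$ and to make the barrier constructions at $\partial B_R$ work, not in any $t$-uniform way.
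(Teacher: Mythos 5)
Your overall scaffolding --- bound the Hessian separately on the two boundary components, then propagate inward using concavity of $F$ and a maximum principle for $\Delta u^*_{R,t}$ --- matches the paper's architecture, and your treatment of the double-tangential estimate on $\partial B_1$ (differentiate the Robin relation twice, use the commutation identity $\partial_n(Tu^*)=T(\partial_n u^*)$) is essentially what the paper does in Step~1. But there is a genuine gap in your handling of the \emph{double-normal} derivative on the Robin boundary $\partial B_1$. You claim it ``follows as on $\partial B_R$ using ellipticity and concavity of $F$,'' but for the Hessian quotient $\sigma_d/\sigma_{d-k}$ one cannot simply read $u^*_{nn}$ off from the equation once the other entries are bounded: both $\sigma_d$ and $\sigma_{d-k}$ are affine in $u^*_{nn}$, so solving for it requires a nondegeneracy of a certain minor combination that ellipticity alone does not provide --- and on an oblique boundary you do not even have the rest of the Hessian in hand beforehand. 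The paper's Step~2 circumvents this with a specific auxiliary function $\phi^*_{R,t}:=p\cdot Du^*_{R,t}-2u^*_{R,t}$, which satisfies $F^{ij}(\phi^*_{R,t})_{ij}=0$. By the Robin condition, $\phi^*_{R,t}=-u^*_{R,t}-(1-t)\psi_t$ on $\partial B_1$, so under the standing assumption $c<\bar c$ one gets $\phi^*_{R,t}|_{\partial B_1}>2c$, while the asymptotics of $\underline\omega_t$ force $\phi^*_{R,t}|_{\partial B_R}\approx 2c+O(R^{2-d^*_k(a(t))})$. Hence for $R$ large the maximum of $\phi^*_{R,t}$ is attained on $\partial B_1$, which gives $(\phi^*_{R,t})_n\le 0$ there and therefore $(u^*_{R,t})_{nn}\le 2(u^*_{R,t})_n\le C$. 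This device is the crux of the lemma and is absent from your proposal.

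Two smaller points. First, you attribute ``$R$ sufficiently large'' to the subsolution/barrier at $\partial B_R$, but in the paper the genuinely $R$-dependent step is exactly the one just described --- forcing the max of $\phi^*_{R,t}$ onto $\partial B_1$ --- so its role is tied to the double-normal estimate, not to the ordering of sub/supersolutions. Second, your $\partial B_R$ treatment by ``the barrier construction of ITW'' / Guan needs a caveat: the paper explicitly points out that the Hessian quotient operator does not satisfy assumption~(1.7) of Guan's paper, so the standard barrier fails, and they replace it by $\mathfrak b = u^*_{R,t}-\bar\omega_t+C(R^2-|p|^2)$, using $\sum_{i,j}F^{ij}(u^*_{R,t})_{ij}=F$ to obtain the required supersolution inequality. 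As written, your appeal to the general oblique/Dirichlet $C^2$ theory would run into both of these structural obstructions.
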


 \begin{proof}
 We will divide our proof into the following three steps.

 Step 1. We derive the double tangental derivatives on $\partial B_1$. In fact, we will prove that 
  \begin{equation}\label{chou12}
 |T^{2}u_{R,t}^{*}|\leq C~~~~\mbox{on }\bar{B}_{R}\setminus B_{1}.
 \end{equation}
 Indeed, by differentiating (\ref{fd5a}) with respect to $T$ twice, we have that
 \begin{equation*}
  \sum\limits_{i,j=1}^{d}F^{ij}(D^{2}u_{R,t}^{*})(T^{2}u_{R,t}^{*})_{ij}+\sum\limits_{i,j=1}^{d}\sum\limits_{p,q=1}^{d}F^{ij,pq}(D^{2}u_{R,t}^{*})(Tu_{R,t}^{*})_{ij}(Tu_{R,t}^{*})_{pq}=0,
 \end{equation*}
 where for any $d\times d $ real symmetric matrix $M$, $F(M)$, $F^{ij}(M)$ are defined as in \eqref{F} and 
 \begin{equation*}
 F^{ij,pq}(M):=\frac{\partial^{2}F}{\partial M_{ij}\partial M_{pq}}(M),~~~~~~~~i,j,p,q=1,\cdots,d.
 \end{equation*}
 Note that the calculation of the above equality can be found in Lemma 2.1 of \cite{ITW}. By the concavity of $F$, we have that
  \begin{equation*}
 \sum\limits_{i,j=1}^{d}F^{ij}(D^{2}u_{R,t}^{*})(T^{2}u_{R,t}^{*})_{ij}\geq0~~~~\mbox{in }B_{R}\setminus\bar{B}_{1}.
 \end{equation*}
 It follows from the maximum principle,
 \begin{equation*}
 T^{2}u_{R,t}^{*}\leq\max\limits_{\partial B_{R}\cup\partial B_{1}}T^{2}u_{R,t}^{*}~~~~\mbox{on }\bar{B}_{R}\setminus B_{1}.
 \end{equation*}
 By \eqref{fd7a}, we have that
 \begin{equation}\label{chou10}
 T^{2}u_{R,t}^{*}=T^2\underline{\omega}_t~~~~\mbox{on }\partial B_{R}.
 \end{equation}
If there exists $p_{0}\in\partial B_{1}$ such that $T^{2}u_{R,t}^{*}(p_{0})=\max\limits_{\partial B_{R}\cup\partial B_{1}}T^{2}u_{R,t}^{*}$, then we have that $(T^{2}u_{R,t}^{*})_{n}(p_{0})\leq0$. It follows that
 \begin{eqnarray}\label{chou11}
 T^{2}u_{R,t}^{*}(p_{0})&=&T^{2}((u_{R,t}^{*})_{n}+(1-t)\psi_t)(p_{0})\\
 &=&(T^{2}u_{R,t}^{*})_{n}(p_{0})+(1-t)T^{2}\psi_t(p_{0})\nonumber\\
 &\leq& 4\max\limits_{\partial B_{1}}(|D\psi_t|+|D^{2}\psi|),\nonumber
 \end{eqnarray}
 where we have used \eqref{comm} twice in the second equality. Combining (\ref{chou10}) and (\ref{chou11}) together, we can obtain (\ref{chou12}).

 Step 2. We derive the double normal derivatives on $\partial B_1$. Let
 \begin{equation*}
 \phi_{R,t}^{*}:=p\cdot Du_{R,t}^{*}-2u_{R,t}^{*}.
 \end{equation*}
 A straightforward calculation gives that
\begin{equation*}
(\phi_{R,t}^{*})_{ij}=\sum\limits_{l=1}^{d}p_{l}(u_{R,t}^{*})_{ijl}.
\end{equation*}
It follows that
\begin{equation*}
\sum\limits_{i,j=1}^{d}F^{ij}(D^{2}u_{R,t}^{*})(\phi_{R,t}^{*})_{ij}=\sum\limits_{l=1}^{d}p_{l}\sum\limits_{i,j=1}^{d}F^{ij}(D^{2}u_{R,t}^{*})(u_{R,t}^{*})_{ijl}=0~~~~\mbox{in }B_{R}\setminus\bar{B}_{1}.
\end{equation*}
By the maximum principle, we have that
\begin{equation*}
 \min\limits_{\partial B_{R}\cup\partial B_{1}}\phi_{R,t}^{*}\leq \phi_{R,t}^{*}\leq\max\limits_{\partial B_{R}\cup\partial B_{1}}\phi_{R,t}^{*}~~~~\mbox{on }\bar{B}_{R}\setminus B_{1}.
 \end{equation*}
 By \eqref{fd7a}, we have that, on $\partial B_R$, 
 \begin{eqnarray}\label{chou1}
 \phi_{R,t}^{*}\leq p\cdot D\underline{\omega}_t-2\underline{\omega}_t= 2c+O(|p|^{2-d^*_k(a(t))}),
 \end{eqnarray}
 for sufficiently large $R$. 
 On $\partial B_{1}$, we have 
 \begin{equation*}
\phi_{R,t}^{*}=\partial u^{*}_{R,t}/\partial n-2u^{*}_{R,t}=-u_{R,t}^{*}-(1-t)\psi_t\geq-u_{R,t}^{*}\geq-\bar{\omega}_t\geq-\max\limits_{\partial B_{1}}\bar{\omega}_t>2c,
 \end{equation*}
 by using of the condition $c< -\frac{a_i}{2}$ for $i=1,\cdots, d$. 
 Then for sufficiently large $R$, the maximum value of $\phi_{R,t}^{*}$ should be achieved on $\partial B_{1}$. It follows that $(\phi_{R,t}^{*})_{n}\leq0$ on $\partial B_1$, which implies that  
 $$(u^*_{R,t})_{nn}\leq 2 (u^*_{R,t})_n\leq C\mbox{  on $\partial B_1$}. $$ 

 Step 3. We finish the proof of this lemma.
Since we have 
 \begin{equation*}
 \sum\limits_{i,j=1}^{d}F^{ij}(D^{2}u_{R,t}^{*})(\Delta u_{R,t}^{*})_{ij}\geq0,
 \end{equation*}
 the maximum principle and the above two steps yield that
 \begin{equation*}
 \max\limits_{\bar{B}_{R}\setminus B_{1}}\Delta u_{R,t}^{*}=\max\limits_{\partial{B}_{R}\cup\partial B_{1}}\Delta u_{R,t}^{*}.
 \end{equation*}
By the convexity of $u^*_{R,t}$, Step 1 and Step 2, we have the $C^2$ estimate on $\partial B_1$. 

For the $C^2$ boundary estimate on $\partial B_R$, we would like to use the same argument as Guan in \cite{Guan}. 
However, our equation is a Hessian quotient equation, which does not satisfy the assumption (1.7) in \cite{Guan}. Therefore, in order to obtain the $C^2$ boundary estimates, we need to give a different proof of Lemma 6.2 in \cite{Guan}, i.e., we need to construct a barrier function  $\mathfrak{b}$ satisfying
\begin{eqnarray}\label{mmm}
\sum_{i,j}F^{ij}\mathfrak{b}_{ij}\leq -c(1+\sum_i F^{ii})\,\,\mbox{in $B_R\setminus B_1$}\mbox{ for some constant $c$},
\end{eqnarray}
and
\[\mathfrak{b}\geq 0\mbox {~in $B_R\setminus B_1$},\ \  \mathfrak{b}=0\,\,\mbox{on $ \partial B_R.$}\]
We define a new barrier function:  
$$\mathfrak{b}=u^*_{R,t}-\bar{\omega}_t+C(R^2-|p|^2).$$
It is easy to see that $\mathfrak{b}\geq 0$ in $B_R\setminus B_1$ and $\mathfrak{b}=0$ on $\partial B_R$.  Moreover, since 
$\sum_{i,j}F^{ij}(u^*_{R,t})_{ij}=F$, taking sufficiently large $C$, we can get \eqref{mmm}. 
We have obtained the desired estimate. 

 \end{proof}

We are in the position to solve  our Dirichlet-Neumann problem on the sufficiently large annulus.    
\begin{lem}\label{app}
For sufficiently large $R>1$, the following approximate problem on the annulus:
\begin{equation}\label{appprob}
\frac{\sigma_{d}(\lambda(D^{2}u^{*}_{R}))}{\sigma_{d-k}(\lambda(D^{2}u^{*}_{R}))}=\frac{1}{\binom{d}{k}}~~~~\mbox{in }B_{R}\texttt{\symbol{'134}}\bar{B}_{1}
\end{equation}
with the boundary conditions:
\begin{equation}\label{appbound}
u^{*}_{R}=\partial u^{*}_{R}/\partial n~~~~\mbox{on }\partial B_{1}\text{ and } u^{*}_{R}=\underline{\omega}_1~~~~\mbox{on }\partial B_{R}
\end{equation}
admits a unique strictly convex solution in $C^{2}(\bar{B}_{R}\setminus B_{1})$. 
\end{lem}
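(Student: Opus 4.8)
The plan is to solve the Dirichlet-Neumann problem \eqref{appprob}-\eqref{appbound} by the continuity method, using the family of problems \eqref{fd5a}-\eqref{fd7a} parametrized by $t\in[0,1]$ as the bridge: note that \eqref{appprob}-\eqref{appbound} is exactly \eqref{fd5a}-\eqref{fd7a} at $t=1$, since $\sigma_d(a(1))/\sigma_{d-k}(a(1))=\sigma_d(a)/\sigma_{d-k}(a)=\binom{d}{k}^{-1}\cdot\binom{d}{k}^2$... more precisely the right-hand side equals $1/\binom{d}{k}$ after recalling $A\in\mathcal{A}_k$ forces $\sigma_d(a)/\sigma_{d-k}(a)=1/\binom{d}{k}$, and that $\psi_1\equiv 0$, so \eqref{fd6a} becomes \eqref{appbound}. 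Let $S\subseteq[0,1]$ be the set of $t$ for which \eqref{fd5a}-\eqref{fd7a} admits a strictly convex solution $u^*_{R,t}\in C^{2,\alpha}(\bar B_R\setminus B_1)$. First I would observe $0\in S$, since at $t=0$ the function $\underline{\omega}_0$ itself solves the problem: by \eqref{psit} with $\psi_0=\underline{\omega}_0-\partial\underline{\omega}_0/\partial n$, the Robin condition \eqref{fd6a} is satisfied identically, the Dirichlet condition \eqref{fd7a} is trivial, and $\underline{\omega}_0$ is a genuine solution of the PDE \eqref{chenglu'} with $l=d-k$ when $\underline t_l(a(0))$ is attained (here $a(0)=(1,\dots,1)$ is the isotropic case, where the ODE becomes exact rather than an inequality). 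So $S\neq\emptyset$.

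Next I would establish that $S$ is closed in $[0,1]$. This is where the a priori estimates of the previous section do the work: Lemmas \ref{c0}, \ref{c1}, \ref{c2} give, for each fixed large $R$, uniform $C^0$, $C^1$, $C^2$ bounds on any solution $u^*_{R,t}$ with $t\in[0,1]$, with constants independent of $t$. Once the $C^2$ norm is controlled and the equation is uniformly elliptic on the set where $D^2u^*_{R,t}$ is pinched between the sub/super solution Hessians, the Evans-Krylov theorem (for the concave operator $F$ in \eqref{F}) upgrades this to a uniform $C^{2,\alpha}$ bound, and then Schauder estimates give uniform $C^{k,\alpha}$ bounds for all $k$. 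Hence if $t_j\to t_\infty$ with $t_j\in S$, the solutions $u^*_{R,t_j}$ are precompact in $C^{2,\alpha}$ and a subsequence converges to a strictly convex $C^{2,\alpha}$ solution at $t_\infty$; strict convexity survives the limit because the Hessian is bounded below by $D^2\underline\omega_{t_\infty}>0$. Thus $t_\infty\in S$.

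Then I would show $S$ is open. Fix $t_0\in S$ with solution $u^*_{R,t_0}$. Linearizing \eqref{fd5a} at $u^*_{R,t_0}$ gives the operator $L v=F^{ij}(D^2u^*_{R,t_0})v_{ij}$, which is uniformly elliptic (strict convexity plus the $C^2$ bound) with no zeroth-order term; the boundary operators are the oblique/Robin condition $v-\partial v/\partial n$ on $\partial B_1$ — which is a genuinely oblique (indeed non-tangential, since $n$ is the outer normal on $\partial B_1$) boundary operator with a favorable sign on the zeroth-order coefficient — and the Dirichlet condition on $\partial B_R$. By standard linear theory for oblique-derivative problems (e.g.\ the Schauder theory in Gilbarg-Trudinger, Chapter 6, together with the Fredholm alternative; the sign of the coefficient in $v-v_n$ rules out a kernel via the maximum principle/Hopf lemma) the map $u\mapsto(F(D^2u),\ u-\partial u/\partial n|_{\partial B_1},\ u|_{\partial B_R})$ is a local $C^1$-diffeomorphism between the relevant Hölder spaces near $u^*_{R,t_0}$. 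The implicit function theorem then yields solutions for $t$ near $t_0$; strict convexity is an open condition so it persists. The main obstacle I anticipate is not any single one of these ingredients but rather bookkeeping the $t$-dependence of the right-hand side $\sigma_d(a(t))/\sigma_{d-k}(a(t))$, of $\underline\omega_t$, and of $\psi_t$ — one must check these depend smoothly (at least $C^1$) on $t$, which follows from the explicit formula \eqref{w} and the fact that $\underline t_l(a(t))$ is the minimum of finitely many smooth functions (so only Lipschitz in $t$ in general, but the subsolution $\underline\omega_t$ is still continuous and one can either work with one-sided estimates or note that the continuity method only needs continuity of the data together with the uniform a priori estimates and local solvability). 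Finally, uniqueness of the strictly convex solution follows from the comparison principle for the concave operator $F$ together with the oblique boundary condition: if $u_1,u_2$ are two solutions, $w=u_1-u_2$ satisfies $\tilde F^{ij}w_{ij}=0$ in $B_R\setminus\bar B_1$ for some uniformly elliptic $\tilde F^{ij}$, with $w=0$ on $\partial B_R$ and $w-\partial w/\partial n=0$ on $\partial B_1$; the interior maximum principle forces an extremum of $w$ on the boundary, on $\partial B_1$ the Hopf lemma contradicts the oblique condition unless $w\equiv\text{const}$, and the Dirichlet data on $\partial B_R$ then forces $w\equiv 0$.
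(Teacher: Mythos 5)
Your proposal follows the same overall strategy as the paper: continuity method in $t$, with closedness supplied by Lemmas~\ref{c0}--\ref{c2} and openness by linearizing at a solution and solving the resulting linear Robin/Dirichlet problem. Your observation that $\underline{\omega}_0$ solves the $t=0$ problem because the isotropic case makes the subsolution ODE exact, and your uniqueness argument via Hopf's lemma on $\partial B_1$ (after noting the outward normal of the annulus is $-n$ there, so the zeroth-order coefficient has the benign sign), both match what the paper implicitly uses.

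The one place you and the paper diverge, and where your write-up is a bit thin, is the solvability of the linearized oblique problem \eqref{fd5a''}--\eqref{fd6a''}. You invoke ``standard linear theory for oblique-derivative problems (e.g.\ the Schauder theory in Gilbarg--Trudinger, Chapter 6, together with the Fredholm alternative).'' The authors, however, explicitly remark that they \emph{could not} locate a directly applicable reference and therefore give a self-contained argument: a second (nested) continuity method in a parameter $s\in[0,1]$ interpolating from $\Delta$ to $a_{ij}D_{ij}$, with the $s=0$ endpoint solved by Lax--Milgram in $H^1$ and elliptic regularity. The reason a citation alone is delicate is the sign convention: GT's Theorem 6.31 requires $\gamma<0$ in $Nv=\beta\cdot Dv+\gamma v$, whereas here (writing the condition as $\partial v/\partial\nu + v=\tilde\psi$ with $\nu$ the outward normal of $B_R\setminus\bar B_1$) one has $\gamma=1>0$. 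Your route still works — obliqueness gives Fredholmness via Agmon--Douglis--Nirenberg, index $0$ can be established by homotopy to a model problem (or, equivalently, by the very Lax--Milgram computation the paper does), and your Hopf-lemma uniqueness then forces surjectivity — but you would need to spell out the index-$0$ step rather than just citing Fredholm, since that is exactly the point GT does not hand you for this sign. In short: correct idea, same skeleton, but the step you wave at is the step the paper deliberately unpacks; a full write-up should either reproduce the paper's nested continuity/Lax--Milgram argument or make the Fredholm-index claim explicit.
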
 

\begin{proof}
the closedness follows from Lemma \ref{c0}-\ref{c2}. We only need to obtain the openness of the problem (\ref{fd5a})-(\ref{fd7a}).
We let 
\begin{eqnarray}
&&v=\frac{\partial u_{R,t}^{*}}{\partial t}, f=\frac{\partial}{\partial t} \left(\frac{\sigma_{d}(a(t))}{\sigma_{l}(a(t))}\right)^{1/(d-l)}, a_{ij}=F^{ij}(D^{2}u_{R,t}^{*}), \nonumber\\
&&\tilde{\psi}=-\psi_t+(1-t)\frac{\partial \psi_t}{\partial t}, \tilde{\varphi}=\frac{\partial\underline{\omega}_t}{\partial t}.\nonumber
\end{eqnarray}
Differentiating (\ref{fd5a})-(\ref{fd7a}) with respect to $t$, we have the linear problem on the annulus:   
\begin{equation}\label{fd5a''}
\sum\limits_{i,j=1}^{n}a_{ij}v_{ij}=f~~~~\mbox{in }B_{R}\texttt{\symbol{'134}}\bar{B}_{1},
\end{equation}
 with the boundary condition
\begin{equation}\label{fd6a''}
v=\frac{\partial v}{\partial n}+\tilde{\psi}~~~~\mbox{on }\partial B_{1}, \text{ and } v=\tilde{\varphi}~~~~\mbox{on }\partial B_{R}.\end{equation}

Since we do not find an appropriate reference for  the existence of the problem (\ref{fd5a''})-(\ref{fd6a''}), thus we give a sketch proof here.  We apply the continuity method to solve this problem. For any $0\leq s\leq 1$, we consider a family of the uniformly elliptic equations:
\begin{equation}\label{liuli}
L_{s}v:=s\sum_{i,j=1}^da_{ij}v_{ij}+(1-s)\Delta v=f~~~~\mbox{in }B_{R}\setminus\bar{B}_{1}
\end{equation}
with the boundary conditions \eqref{fd6a''}. Here $\Delta$ is the Laplace operator.

The closedness can be obtained as follows. By theorem 6.30 in \cite{gt}, for any solution $v\in C^{2,\alpha}(\bar{B}_{R}\setminus B_{1})$ of \eqref{liuli} and \eqref{fd6a''}, we have 
\begin{equation*}
\|v\|_{2,\alpha}\leq C(\|v\|_{0}+\|\tilde{\psi}\|_{1,\alpha}+\|\tilde{\varphi}\|_{2,\alpha}+\|f\|_{0,\alpha}).
\end{equation*}
Here $\|\cdot\|_{k,\alpha}$ is the $C^{k,\alpha}$ norm.  
The maximum principle also gives that  
\begin{equation*}
\|v\|_{0}\leq C(\|\tilde{\psi}\|_{0}+\|\tilde{\varphi}\|_{0}+\|f\|_{0}).
\end{equation*}
Thus, $\|v\|_{2,\alpha}$ is bounded by some constant independent of $s$. 
For the openness, we let $X_{1}:=C^{2,\alpha}(\bar{B}_{R}\setminus B_{1})$ and $X_{2}=C^{2,\alpha}(\partial B_{R})\times C^{1,\alpha}(\partial B_{1})$ with the norm $\|(\tilde{\varphi},\tilde{\psi})\|:=\|\tilde{\varphi}\|_{2,\alpha}+\|\tilde{\psi}\|_{1,\alpha}$. Then it is easy to see that the map $L_{s}:X_{1}\rightarrow X_{2}$ is  one-to-one and onto if and only if  the problem (\ref{liuli}), (\ref{fd6a''}) can be uniquely solved.

Thus, we only need to solve (\ref{liuli})-(\ref{fd6a''}) for  $s=0$, which is equivalent to
\begin{equation}\label{liuma}
\Delta u=f~~~~\mbox{in }B_{R}\setminus \bar{B}_{1}
\end{equation}
with the boundary conditions
\begin{equation}\label{liuma-}
u=\partial u/\partial n+\Phi~~~~\mbox{on }\partial B_{1} \text{ and } u=0~~~~\mbox{on }\partial B_{R}.
\end{equation}
We now using $L^2$ theory to solve the above problem. 
We say that $u$ is a weak solution of (\ref{liuma})-(\ref{liuma-}) if $u\in H_{0}^{1}(B_{R})$ satisfies
\begin{equation*}
B[u,\xi]:=\int_{B_{R}\setminus \bar{B}_{1}}D\xi\cdot Du+\int_{\partial B_{1}}\xi u=\int_{\partial B_{1}}\xi \Phi-\int_{B_{R}\setminus \bar{B}_{1}}\xi f.
\end{equation*}
Note that
\begin{equation*}
B[u,u]\geq \frac{\lambda}{2}\int_{B_{R}\setminus \bar{B}_{1}}|Du|^{2}-c\lambda\int_{B_{R}\setminus \bar{B}_{1}}u^{2}~~~~\mbox{for some constants $\lambda$ and $c$}.
\end{equation*}
Then the Lax-Milgram theorem will give the existence of the weak solution. Moreover, by the regularity results  in \cite{lie}, the weak solution is in fact the classical solution.

\end{proof}

\section{The Local Estimates}

For sufficiently large $R$, let $u^{*}_{R}$ be the solution of \eqref{appprob} and \eqref{appbound} obtained from Lemma \ref{app}. In this section, we will establish the local estimates of $u^{*}_{R}$ with respect to $R$ to obtain the existence of the solution of \eqref{fd5}-\eqref{fd7}.  

The local $C^0$ estimate can be stated as follows.

\begin{lem}\label{c01}
Let $R>R_{0}>1$. There exists a constant $C$ depending on $R_{0}$, $a$ and $c$ such that for any $u^{*}_{R}\in C^{2}(\bar{B}_{R}\setminus B_{1})$ satisfying \eqref{appprob} and \eqref{appbound}, we have that 
\begin{equation}\label{c0estimate---}
|u^*_R|\leq C   \text{ on }  \bar{B}_{R_0}\setminus B_{1}.
\end{equation}
\end{lem}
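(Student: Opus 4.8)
The plan is to establish the local $C^0$ bound for $u^*_R$ on $\bar{B}_{R_0}\setminus B_1$ by trapping $u^*_R$ between a lower and an upper barrier that are independent of $R$. The natural candidates are the subsolution $\underline{\omega}_1$ and supersolution $\bar{\omega}_1$ constructed in Section 3 (with $a$ replaced by $a(1)=a$), since these already satisfy the Robin condition \eqref{fd6} in the correct direction and the Hessian quotient equation \eqref{fd5-------} with the right hand side $\sigma_d(a)/\sigma_l(a)=1/\binom{d}{k}$.

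First I would record the lower bound. On $\partial B_1$ we have the Robin condition \eqref{appbound}, $u^*_R=\partial u^*_R/\partial n$, while the subsolution satisfies $\underline{\omega}_1-\partial\underline{\omega}_1/\partial n\le 0$ on $\partial B_1$. On $\partial B_R$ we have exactly $u^*_R=\underline{\omega}_1$. Since $\underline{\omega}_1$ is a subsolution of the equation and $u^*_R$ a solution, the comparison principle for the Hessian quotient operator (using the concavity of $F$ from \eqref{F} and the fact that both functions are strictly convex, hence the operator is elliptic along the segment joining their Hessians) gives $\underline{\omega}_1\le u^*_R$ on all of $\bar{B}_R\setminus B_1$; the Robin boundary term only helps since $u^*_R-\partial u^*_R/\partial n = 0 \ge \underline{\omega}_1-\partial\underline{\omega}_1/\partial n$ there. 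In particular $u^*_R\ge \min_{\bar{B}_{R_0}\setminus B_1}\underline{\omega}_1 =: -C$, a bound depending only on $R_0$, $a$, $c$ (through $C_1=C_1(c)$ in \eqref{w}).

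Next I would record the upper bound using $\bar{\omega}_1(p)=\frac12\sum_i a_i p_i^2 - c$. On $\partial B_1$, $\bar{\omega}_1-\partial\bar{\omega}_1/\partial n = -c-\frac12\sum a_i p_i^2\ge -c-\frac12\lambda_{\max}(A)^{-1}\ge 0$ since $c<-\frac{a_i}{2}$ for each $i$, while $u^*_R-\partial u^*_R/\partial n=0$ on $\partial B_1$. On $\partial B_R$ we have $u^*_R=\underline{\omega}_1\le \bar{\omega}_1$ by \eqref{sub}–\eqref{asyw} (the correction term $-\int_r^\infty s[\xi^{-1}(C_1 s^{-d_k^*})-1]\,ds$ is nonpositive since $\xi^{-1}\ge 1$, and $\mu(C_1)\le -c$ because the range of $\mu$ forced $-c\ge\eta^2/2$ but here we need the stronger comparison at radius $R$; alternatively one directly uses $\underline{\omega}_1\le\bar\omega_1$ which follows from $\Phi(r)\le \frac12 r^2 - c$). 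Then the comparison principle, together with the supersolution property of $\bar{\omega}_1$ and the favorable sign of the Robin term, yields $u^*_R\le\bar{\omega}_1$ on $\bar{B}_R\setminus B_1$, hence $u^*_R\le \max_{\bar{B}_{R_0}\setminus B_1}\bar{\omega}_1\le C$ with $C$ depending only on $R_0$, $a$, $c$. Combining the two bounds gives \eqref{c0estimate---}.

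The main obstacle is justifying the comparison principle with the mixed Robin/Dirichlet boundary data for the fully nonlinear Hessian quotient operator: one must check that at an interior maximum of $u^*_R-\bar\omega_1$ the ellipticity gives a contradiction with the equation, and that no maximum can occur on $\partial B_1$ because the oblique (Robin) boundary inequality has the right sign there — this is where $u^*_R-\partial u^*_R/\partial n=0$ and the strict inequality $\bar\omega_1-\partial\bar\omega_1/\partial n\ge 0$ (from $c<-a_i/2$) are essential. The analogous argument for the subsolution needs the reversed inequality $\underline{\omega}_1-\partial\underline{\omega}_1/\partial n\le 0$, already proved in Section 3. Once the boundary maximum principle for oblique derivative problems (e.g.\ via Hopf's lemma applied to the linearized operator $F^{ij}(D^2 u^*_R)\partial_{ij}$, which is uniformly elliptic on the strictly convex solution) is invoked, the rest is the routine extraction of the constant by taking max/min of the explicit barriers over the compact set $\bar{B}_{R_0}\setminus B_1$.
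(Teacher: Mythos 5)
Your proposal is correct and follows essentially the same route as the paper: the paper's proof is a one-line appeal to the maximum principle sandwiching $u^*_R$ between the sub- and supersolution $\underline{\omega}_1$ and $\bar{\omega}_1$ of Section 3, which is exactly what you do, only with the boundary sign checks on $\partial B_1$ and $\partial B_R$ written out explicitly. (One tiny slip: on $\partial B_1$ the bound is $-c-\tfrac12\max_i a_i\ge 0$, not $-c-\tfrac12\lambda_{\max}(A)^{-1}$, but you immediately quote the correct hypothesis $c<-a_i/2$, so the argument is unaffected.)
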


  \begin{proof}
  Let $\underline{\omega}_1$ and $\bar{\omega}_1$ be the sub and super solution constructed as in \eqref{w} and \eqref{sup}. Then, by the maximum principle, we have that 
   \begin{equation*}
 \underline{\omega}_1\leq u^{*}_{R}\leq \bar{\omega}_1~~~~\mbox{on }\bar{B}_{R}\setminus B_{1}.
 \end{equation*}
 (\ref{c0estimate---}) follows from the above inequality directly.
 \end{proof}

The local $C^1$ estimate can be stated as follows.
\begin{lem}\label{c11}
Suppose $R_0>0$ is a given sufficiently large constant.  For any $R>R_0+10$, 
 let $u^{*}_{R}$ be the solution of \eqref{appprob} and \eqref{appbound}. Then there exists a constant $C>0$ depends only on $R_0$ such that
 \begin{equation*}
 |Du_{R}^{*}|\leq C   \text{ on }  \bar{B}_{R_0}\setminus B_{1}.
 \end{equation*}
 \end{lem}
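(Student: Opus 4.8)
The plan is to establish the local gradient bound via the convexity of $u_R^*$ together with the local $C^0$ bound of Lemma \ref{c01} and a boundary gradient estimate on $\partial B_1$. First I would recall that since $u_R^*$ is strictly convex on $\bar B_R\setminus B_1$, its maximum gradient over any sub-annulus $\bar B_{R_0}\setminus B_1$ is controlled by the gradient on the boundary of that sub-annulus: precisely, for $p\in \bar B_{R_0}\setminus B_1$ and $q$ on the far boundary, convexity gives $Du_R^*(p)\cdot(q-p)\le u_R^*(q)-u_R^*(p)$, so $|Du_R^*(p)|$ is bounded in terms of $\sup|u_R^*|$ on $\bar B_{R_0+5}\setminus B_1$ and the distance to $\partial B_{R_0+5}$, provided we also have a lower barrier. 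More carefully, the standard convexity trick is: $\max_{\bar B_{R_0}\setminus B_1}|Du_R^*|\le \max_{\partial B_{R_0}\cup\partial B_1}|Du_R^*|$ (the gradient of a convex function on an annular region attains its max on the boundary), so the whole problem reduces to (i) the gradient on $\partial B_1$ and (ii) the gradient on $\partial B_{R_0}$.

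For the gradient on $\partial B_1$: the tangential derivatives are controlled exactly as in the proof of Lemma \ref{c1} — differentiate \eqref{appprob} along the angular fields $T_{ij}$, use $F^{ij}(D^2u_R^*)(T_{ij}u_R^*)_{ij}=0$, the Robin condition \eqref{appbound}, the commutation identity $\partial_n(Tu_R^*)=T(\partial_n u_R^*)$, and the maximum principle, bounding the extremal values of $Tu_R^*$ on $\partial B_1$ by $\max_{\partial B_1}|D\underline\omega_1|$ coming from the outer boundary and by the sign of $(Tu_R^*)_n$ at an interior extreme point. The normal derivative $\partial_n u_R^* = u_R^*$ on $\partial B_1$ is then bounded directly by Lemma \ref{c01}. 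For the gradient on $\partial B_{R_0}$: here I would use an interior-type argument. One clean route is to compare $u_R^*$ with the explicit sub/super solutions $\underline\omega_1\le u_R^*\le \bar\omega_1$ from Section 3, which agree to leading order with $\tfrac12\sum a_i p_i^2 - c$; since these bounds are uniform on $\bar B_{R_0+5}\setminus B_1$ (independent of $R$ for $R$ large), and $u_R^*$ is convex, the gradient at any point of $\bar B_{R_0}\setminus B_1$ is squeezed: for any unit vector $e$ and small $h$ with $p\pm he\in \bar B_{R_0+1}\setminus B_1$, convexity gives
\begin{equation*}
\frac{u_R^*(p)-u_R^*(p-he)}{h}\le D_e u_R^*(p)\le \frac{u_R^*(p+he)-u_R^*(p)}{h},
\end{equation*}
and both difference quotients are bounded by $2h^{-1}\max_{\bar B_{R_0+1}\setminus B_1}|u_R^*|\le C(R_0)$ using Lemma \ref{c01}. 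This already gives the local gradient bound directly, without even separately treating $\partial B_1$ and $\partial B_{R_0}$, provided one has the $C^0$ bound on a slightly larger annulus $\bar B_{R_0+1}\setminus B_1$ — which is exactly what Lemma \ref{c01} supplies (apply it with $R_0$ replaced by $R_0+1$, legitimate since $R>R_0+10$).

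The main subtlety, and what I would be careful about, is near the inner boundary $\partial B_1$: the difference-quotient argument requires moving a definite distance $h$ inside the domain, which fails for points on $\partial B_1$ itself. So for points within distance $h$ of $\partial B_1$ one genuinely needs the boundary gradient estimate on $\partial B_1$ described above (tangential derivatives via the linearized equation and the Robin condition, normal derivative via $\partial_n u_R^* = u_R^*$ and Lemma \ref{c01}); then convexity propagates that bound — combined with the difference-quotient bound away from $\partial B_1$ — over all of $\bar B_{R_0}\setminus B_1$ by the maximum-of-gradient-on-the-boundary principle for convex functions. I do not expect any real obstacle here: every ingredient (uniform sub/super solutions, local $C^0$ bound, convexity, the $T_{ij}$-differentiation argument) is already in hand from Section 3 and Lemmas \ref{c1} and \ref{c01}; the only thing to check is that all constants depend on $R_0$ alone and not on $R$, which holds because $\underline\omega_1,\bar\omega_1$ and their derivatives are fixed functions independent of $R$.
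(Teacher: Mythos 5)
Your core idea --- bound $|Du_R^*|$ on $\bar B_{R_0}\setminus B_1$ directly by convexity difference quotients against the local $C^0$ bound of Lemma~\ref{c01} on a slightly larger annulus --- is a genuinely different (and more elementary) route than the paper's. The paper uses Pogorelov-style localized test functions, namely $\phi=((R_0+10)^2-|p|^2)\,\partial_r u_R^*\,e^{u_R^*}$ for the radial derivative and $\psi=((R_0+5)^2-|p|^2)\,v\cdot Du_R^*\,e^{\epsilon u_R^*}$ for the angular ones, extracting bounds from first-order conditions at an interior maximum and from the Robin condition at $\partial B_1$. Both schemes are designed to keep $\partial B_R$ entirely out of the picture, which is the whole point of the local estimate.

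Where your write-up goes astray is in the sentence proposing to handle the tangential derivatives on $\partial B_1$ ``exactly as in the proof of Lemma~\ref{c1},'' bounding the extremal values of $Tu_R^*$ via the maximum principle on the full annulus. That argument compares $Tu_R^*$ against its Dirichlet data $T\underline\omega_1$ on $\partial B_R$, and since $\underline\omega_1$ is only $E_\eta$-symmetric, not $B_1$-symmetric, one has $T_{ij}\underline\omega_1=(a_j-a_i)p_ip_j\,h(r)$, which is $O(R^2)$ on $\partial B_R$ whenever $A\neq I$. So this route gives a constant that blows up with $R$, exactly what Lemma~\ref{c11} must avoid; this is precisely the gap between the global Lemma~\ref{c1} and the local Lemma~\ref{c11}. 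Fortunately the detour is unnecessary: for $p\in\partial B_1$ and a tangential unit vector $e$, the step $p\pm he$ lies at radius $\sqrt{1+h^2}>1$, so your own difference quotient already controls the tangential component at $\partial B_1$, while the normal component there equals $u_R^*$ by the Robin condition and is bounded by Lemma~\ref{c01}. Away from $\partial B_1$ the outward radial quotient bounds $\partial_r u_R^*$ from above, and the lower bound follows from $\partial_r^2 u_R^*>0$ together with the Robin value on $\partial B_1$, which is also how the paper starts. With the Lemma~\ref{c1} appeal removed and the $\partial B_1$ estimate done entirely by the Robin condition plus tangential difference quotients, your argument closes and yields an $R_0$-dependent constant.
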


 \begin{proof}
 Let $r=|p|$ be the radius of the points. By the convexity, we have $\frac{\partial^2 u^*_{R}}{\partial r^2}>0$. Therefore, the minimum value of  $\frac{\partial  u^*_{R}}{\partial r}$ can be achieved on $\partial B_1$. Thus $\frac{\partial  u^*_{R}}{\partial r}$ has a uniform lower bound in $B_R\setminus B_1$ by Lemma \ref{c01}.  Suppose that the set $\left\{ p\in B_{R_0+10}\setminus \bar{B}_1:\frac{\partial u^*_{R}}{\partial r} (p)>0\right\}$ is non empty. We consider the test function
$$\phi=((R_0+10)^2-|p|^2) \frac{\partial u^*_{R}}{\partial r} e^{u^*_{R}}. $$
Assume  $\phi$ achieves its maximum value at some point $p_0$. If $p_0\in \partial B_1$, by \eqref{appbound} and Lemma \ref{c01}, we get an upper  bound of $\phi$. If $p_0\in B_{R_0+10}\setminus\bar{B}_1$, then at $p_0$, we have $\frac{\partial \phi}{\partial r}=0$, which gives
$$-2r\frac{\partial u^*_{R}}{\partial r}+((R_0+10)^2-|p|^2) \frac{\partial^2 u^*_{R}}{\partial r^2}+((R_0+10)^2-|p|^2) \left(\frac{\partial u^*_{R}}{\partial r}\right)^2=0.$$ Thus, we get
$$((R_0+10)^2-|p|^2)\frac{\partial u^*_{R}}{\partial r}\leq 2r\leq 2R_0+20.$$ It gives an upper bound of $\phi$. Thus, we get the uniform upper and lower bound of  $\frac{\partial u^*_{R}}{\partial r}$ on $\bar{B}_{R_0+9}\setminus B_1$.

Let us estimate the angular derivatives. We consider the function
$$\psi(p,v)=((R_0+5)^2-|p|^2)v\cdot Du^*_{R}(p) e^{\epsilon u^*_{R}(p)}.$$
where $v$ is some vector and $\epsilon>0$ is a small undetermined  constant.
Define the set
$$S=\{(p,v); p\in B_{R_0+5}\setminus \bar{B}_1, v \in T_{p}B_{|p|}, \text{ and } |v|=|p|  \}$$
Here $B_{|p|}$ denotes the ball centered at origin with radius $|p|$ and $T_{p}B_{|p|}$ is its tangential space at $p$. Assume $\psi(p,v)$ achieves its maximum value at point $(p_0,v_0)$ on the closure of $S$.
We rotate the coordinate to satisfy $p_0=(|p_0|,0,\cdots,0)$ and $p_2$ parallel to $v_0$. Thus, denote $T=p_1\frac{\partial}{\partial p_2}-p_2\frac{\partial }{\partial p_1}$, then we have the test function
$$\psi_1=((R_0+5)^2-|p|^2)Tu^*_{R} e^{\epsilon u^*_{R}}$$
also achieves its maximum value at $p_0$ and $\psi_1(p_0)=\psi(p_0,v_0)$.
If $p_0\in \partial B_1$, we have $\frac{\partial \psi_1}{\partial n}\leq 0$, which gives
\begin{eqnarray*}
0&\geq& -2Tu^*_{R}+((R_0+5)^2-1)\frac{\partial Tu^*_{R}}{\partial n}+((R_0+5)^2-1)\epsilon Tu^*_{R}\frac{\partial u^*_{R}}{\partial n}\\
&=&((R_0+5)^2-3) Tu^*_{R} +((R_0+5)^2-1)\epsilon u^*_{R} Tu^*_{R}.
\end{eqnarray*}
Since by Lemma \ref{c01}, we have the uniform lower bound of $u^*_{R}$. If $\epsilon$ is sufficiently small, the above inequality gives a contradiction. Thus, $p_0\in B_{R_0+5}\setminus \bar{B}_1$. At $p_0$,  we have  $\frac{\partial \psi_1}{\partial p_2}=0$, which implies
$$0=((R_0+5)^2-|p|^2)\left(|p_0|\frac{\partial^2u^*_{R}}{\partial p_2^2}-\frac{\partial u^*_{R}}{\partial p_1}\right)+((R_0+5)^2-|p|^2)\frac{\epsilon}{|p_0|} (Tu^*_{R})^2.$$
Thus, we get
$$Tu^*_{R}\leq C\sqrt{\frac{\partial u^*_{R}}{\partial r}}.$$
Therefore, we get an uniform upper bound of $\psi_1$, which implies an uniform upper bound of the angular derivatives of $u^*_{R}$ in $B_{R_0}\setminus B_1$. Thus, we get the uniform bound of $|Du^*_{R}|$ in $B_{R_0}\setminus B_1$.

 \end{proof}
 
 Let us consider the  local $C^2$ estimate. We give the asymptotic behavior of $D^2u^*_R$ at first, then a similar argument as Lemma \ref{c2} will give the local $C^2$ estimate.  However,  the difficulty is lack of the Pogorelov type estimate for Hessian quotient equations, if  we would like to generalize the argument of Caffarelli-Li \cite{cl2003} for Monge-Amp\`{e}re equations.   To overcome the difficulty, our strategy is to consider the asymptotic behavior of $D^2u_R$, instead of $D^2u^*_R$, where $u_R$ is Legendre transform of $u^*_R$.

 By Lemma \ref{c01} and the asymptotic behaviors of $\bar{\omega}_1$ and $\underline{\omega}_1$, there exist two sufficiently large positive constants $r_{0}, C_{0}$ depending on $d$ and $a$, such that for any $R>r_{0}$, we have that
 \begin{equation}\label{fnfn}
 \left|u_{R}^{*}-\frac{1}{2}\sum\limits_{i=1}^{d}a_{i}p_{i}^{2}+c\right|\leq C_{0}|p|^{2-d^*_k(a)}~~~~\mbox{on }\bar{B}_{R}\setminus B_{r_{0}}.
 \end{equation}
 Let $u_R$ be the Legendre transform of $u^*_R$ and $\Omega_R=Du_{R}^{*}(B_R\setminus \bar{B}_1)$. Since $Du^*_R$ is a diffeomorphism, we can denote $\Gamma_1, \Gamma_2$ to be the exterior boundary and interior boundary of $\Omega_R$.   
Then $u_{R}$ satisfies that
 \begin{equation*}
 \sigma_{k}(\lambda(D^{2}u_{R}))=\binom{d}{k}~~~~\mbox{in }\Omega_R.
 \end{equation*}
 Then, we have
 \begin{lem}\label{gradient}
There exist two uniform constants $\theta,\tilde{C}$ such that  
 \begin{equation}\label{bound}
 \Gamma_1\subset \mathbb{R}^d\setminus \bar{B}_{\theta R},\text{ and } \ \ \Gamma_2\subset B_{\tilde{C}}.
 \end{equation}
 Moreover, for sufficiently large $R$, on the annulus $\bar{B}_{\theta R}\setminus B_{\tilde{C}}\subset \Omega_R$, we have
 \begin{equation}\label{fnfn'}
\left |u_{R}-\frac{1}{2}\sum\limits_{i=1}^{d}\frac{1}{a_{i}}x_{i}^{2}-c\right|\leq C_{0}|x|^{2-d^*_k(a)}. 
\end{equation}
\end{lem}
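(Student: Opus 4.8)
\emph{Proof proposal.} The statement has a geometric part — the inclusions \eqref{bound} — and an analytic part — the pointwise asymptotics \eqref{fnfn'}. The plan is to read off \eqref{bound} from the convexity of $u_R^*$ together with the boundary data \eqref{appbound} and the local estimates already at hand, and then to transport \eqref{fnfn} through the Legendre transform to obtain \eqref{fnfn'}. For the interior boundary I would apply Lemma \ref{c11} on a fixed annulus with outer radius $r_0$ (the constant appearing in \eqref{fnfn}): this gives $|Du_R^*|\le C(r_0)$ on $\bar B_{r_0}\setminus B_1$ with $C(r_0)$ independent of $R$, so, setting $\tilde C:=C(r_0)+1$, we get $\Gamma_2=Du_R^*(\partial B_1)\subset B_{\tilde C}$. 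For the exterior boundary, fix $p_0\in\partial B_R$; convexity of $u_R^*$ gives, for every $q\in\partial B_1$,
\[
u_R^*(q)\ \ge\ u_R^*(p_0)+Du_R^*(p_0)\cdot(q-p_0).
\]
Here $u_R^*(p_0)=\underline\omega_1(p_0)\ge\tfrac12(\min_ia_i)R^2-C$ for $R$ large by \eqref{appbound} and the subsolution asymptotics \eqref{asyw}, while $|u_R^*(q)|\le C$ by Lemma \ref{c01} and $|q-p_0|\le R+1$; rearranging yields $|Du_R^*(p_0)|\ge\theta R$ for $R$ large, with $\theta$ a fixed small multiple of $\min_ia_i$, hence $\Gamma_1=Du_R^*(\partial B_R)\subset\RR^d\setminus\bar B_{\theta R}$.

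Next I would show that the shell $A:=\bar B_{\theta R}\setminus B_{\tilde C}$ lies in $\Omega_R$ once $R$ is large enough that $\tilde C<\theta R$. By the two inclusions, $A$ is disjoint from $\partial\Omega_R=\Gamma_1\cup\Gamma_2$ and is connected; it also meets $\Omega_R$, since for $p_1\in\partial B_1$ and $p_R\in\partial B_R$ on a common ray the curve $t\mapsto Du_R^*\big((1-t)p_1+tp_R\big)$, $t\in[0,1]$, lies in $\overline{\Omega_R}$, starts on $\Gamma_2\subset B_{\tilde C}$ and ends on $\Gamma_1$ outside $B_{\theta R}$, so by the intermediate value theorem it passes through a point $y$ with $\tilde C<|y|<\theta R$, and then $y\in A\cap\Omega_R$. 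A connected set disjoint from $\partial\Omega_R$ that meets $\Omega_R$ is contained in $\Omega_R$, so $A\subset\Omega_R$.

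It then remains to prove \eqref{fnfn'} on $A$. Put $\bar q(p):=\tfrac12\sum_ia_ip_i^2-c$, whose Legendre transform is $\bar q^*(x)=\tfrac12\sum_i\tfrac1{a_i}x_i^2+c$. By \eqref{fnfn}, $|u_R^*-\bar q|\le C_0|p|^{2-d_k^*(a)}$ for $r_0\le|p|\le R$. A standard convexity argument, comparing $u_R^*$ with $\bar q$ on small balls and optimizing the increment length, upgrades this zeroth-order bound to the gradient bound
\[
\big|Du_R^*(p)-(a_1p_1,\dots,a_dp_d)\big|\ \le\ C|p|^{1-d_k^*(a)/2},\qquad |p|\ge 2r_0.
\]
Hence, for $x\in A$, the unique $p=p(x):=(Du_R^*)^{-1}(x)$ satisfies $|p|>r_0$ by the choice of $\tilde C$, together with $\tfrac12(\min_ia_i)|p|\le|x|\le 2(\max_ia_i)|p|$, and, choosing $\theta<\tfrac12\min_ia_i$, also $|p|<R$. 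Finally, completing the square gives $x\cdot p-\bar q(p)=\bar q^*(x)-\tfrac12\sum_ia_i\big(p_i-\tfrac{x_i}{a_i}\big)^2$, so
\[
u_R(x)=x\cdot p-u_R^*(p)=\bar q^*(x)-\tfrac12\sum_ia_i\big(p_i-\tfrac{x_i}{a_i}\big)^2-\big(u_R^*(p)-\bar q(p)\big);
\]
plugging in the gradient bound, $|u_R^*(p)-\bar q(p)|\le C_0|p|^{2-d_k^*(a)}$ and $|p|\sim|x|$ yields $|u_R(x)-\bar q^*(x)|\le C|x|^{2-d_k^*(a)}$ on $A$, which is \eqref{fnfn'} up to enlarging $C_0$.

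I expect the last step to be the main obstacle: only the $C^0$ comparison \eqref{fnfn} is available — there is no Pogorelov-type interior estimate for the Hessian quotient operator governing $u_R^*$, which is exactly why one works with $u_R$ — so the control of the mismatch $|p(x)-(x_i/a_i)|$, really a gradient estimate for $u_R^*-\bar q$, must be squeezed out of convexity alone, and one must track carefully that $|p|$ and $|x|$ stay comparable so that the decay exponent $2-d_k^*(a)$ survives the Legendre transform.
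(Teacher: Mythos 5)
Your proof is correct, and the treatment of the geometric part \eqref{bound} is essentially the same as the paper's (radial difference quotient on $\partial B_R$, Lemma~\ref{c11} for $\partial B_1$); your explicit connectedness argument that $\bar B_{\theta R}\setminus B_{\tilde C}\subset\Omega_R$ fills in a step the paper leaves implicit. The analytic part \eqref{fnfn'}, however, is handled by a genuinely different route. The paper never passes through a gradient estimate for $u_R^*$: it computes the Legendre transforms $\bar\omega_1^*$ and $\underline\omega_1^*$ of the explicit barriers in closed form, shows each has the asymptotics $\tfrac12\sum x_i^2/a_i + c + O(|x|^{2-d_k^*(a)})$, and then deduces $\bar\omega_1^*\le u_R\le\underline\omega_1^*$ on the shell directly from the sandwich $\underline\omega_1\le u_R^*\le\bar\omega_1$ and the order-reversing property of the Legendre transform (convexity gives $x\cdot(p_1-p_2)-u_R^*(p_1)+u_R^*(p_2)\le 0$). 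You instead upgrade the $C^0$ bound \eqref{fnfn} to a $C^1$ bound $|Du_R^*-Ap|\le C|p|^{1-d_k^*(a)/2}$ via the increment-optimization argument for convex functions, then expand $u_R(x)=x\cdot p-u_R^*(p)$ about $p=A^{-1}x$. The worry you raise about the $\sqrt\varepsilon$ loss in the gradient bound is unfounded: since the error enters the Legendre expansion as $\tfrac12\sum a_i(p_i-x_i/a_i)^2$, squaring restores exactly the exponent $2-d_k^*(a)$, and the comparability $|p|\sim|x|$ follows from the same $C^1$ estimate; so your route does close. The paper's version is shorter because the order-reversal of the Legendre transform bypasses the $C^1$ step entirely and yields the sharper two-sided pointwise bound $\bar\omega_1^*\le u_R\le\underline\omega_1^*$, whereas your version only gives the decay rate — but that is all the lemma requires.
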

\begin{proof}
For sufficiently large $R$,
on the boundary $\partial B_R$, by the convexity, it is clear that 
\begin{eqnarray}
\frac{\partial u^*_R}{\partial n}&\geq& \frac{\min\limits_{\partial B_R}u^*_R-\max\limits_{\partial B_1}u^*_{R}}{R-1}\geq\frac{\min\limits_{\partial B_R}\underline{\omega}_1-\max\limits_{\partial B_1}\bar{\omega}_1}{R-1}\\
&\geq&\frac{1}{R-1}\left[\frac{1}{2}\sum_{i=1}^da_ip_i^2-c-C\right]\nonumber\\
&\geq&\frac{1}{R-1}\left[\frac{R^2}{2}\min_{1\leq i\leq d} a_i-c-C\right]\nonumber\\
&\geq& \theta R,\nonumber
\end{eqnarray}
where $\theta$ is a small positive constant. Therefore, $|Du^*_R|\geq \theta R$ on $\partial B_R$, which implies the first result of \eqref{bound}. The second result of \eqref{bound} is a corollary of Lemma \ref{c11} if we take $R_0=2$ there. 

It is obvious that $$\bar{\omega}^*_1(x)=x\cdot p-\bar{\omega}_1(p)=\frac{1}{2}\sum_{i=1}^d\frac{x_i^2}{a_i}+c,$$ and it is defined on the whole $\mathbb{R}^n$. Then, let's consider $\underline{\omega}^*_1$, the Legendre transform of $\underline{\omega}_1$. By \eqref{w}, $\underline{\omega}_1$ is defined on $r=\sqrt{\sum_ia_ip_i^2}\geq \eta$. Then we have 
$$x_i=D_i\underline{\omega}_1=a_ip_i\xi^{-1}\left(C_1r^{-d^*_k(a)}\right)=a_ip_ih(r),$$
which implies
\begin{equation}\label{px}
\sum_{i=1}^d\frac{x_i^2}{a_i}=r^2h^2(r).
\end{equation}
Here $h(r)=\xi^{-1}\left(C_1r^{-d^*_k(a)}\right)$. Furthermore, the derivative of the function $r^2h(r)$ is
$$2rh(r)+r^2h'(r)\geq rh(r)>0,$$
where we have used \eqref{shengguo}. Thus, the range of the function $r^2h(r)$ is $[\eta^2h(\eta),\infty)$ in view of $h\geq 1$. 
 Therefore, we known that
 $$D\underline{\omega}_1= \mathbb{R}^d\setminus \left\{x\in\mathbb{R}^n; \sum_i\frac{x_i^2}{a_i}\leq \eta^2h(\eta)\right\}.$$  
Thus, $\underline{\omega}_1^*$ can be defined on $\mathbb{R}^d\setminus B_{\tilde{C}}$, for $\tilde{C}>(\min_{i} a_i) \eta^2h(\eta)$. In view of \eqref{px}, we know that, $$r^2\leq \sum_{i=1}^d\frac{x_i^2}{a_i}\leq r^2h^2(\eta).$$ Combining with \eqref{asyw}, we get
$$\underline{\omega}_1^*=\frac{1}{2}\sum_{i=1}^d\frac{x_i^2}{a_i}+c+O(|x|^{2-d^*_k(a)}), \text{ as } |x|\rightarrow+\infty.$$
At last, we prove, on $\bar{B}_{\theta R}\setminus B_{\tilde{C}}$, 
$$\bar{\omega}_1^*\leq u_R\leq \underline{\omega}_1^*.$$
We only prove the first inequality, and the second one is same. For any $x=D\bar{\omega}_1(p_1)=Du^*_R(p_2)\in\bar{B}_{\theta R}\setminus B_{\tilde{C}}$, we have 
\begin{eqnarray}
\bar{\omega}_1^*(x)-u_R(x)&=&x\cdot p_1-\bar{\omega}_1(p_1)-x\cdot p_2+u^*_R(p_2)\nonumber\\
&\leq& x\cdot (p_1-p_2)-u^*_R(p_1)+u^*_R(p_2)\leq 0\nonumber. 
\end{eqnarray}
Here the last inequality  comes from the strict convexity of $u^*_R$. Thus, by the asymptotic behavior of $\underline{\omega}_1^*, \bar{\omega}_1^*$, we obtain  \eqref{fnfn'}. 
\end{proof}

Now,  we can derive the asymptotic behavior of  $Du_{R}^{*}$ and $D^2u^*_R$.

 \begin{lem}\label{cl}
 There exist two sufficiently large constants $r_{1}>1$ and $C_{1}>0$ such that for any $\theta R>2r_1$ and $i$, $j=1$, $\cdots$, $d$, we have
  \begin{equation}\label{clc}
 |D_{i}u_{R}^{*}-a_{i}p_{i}|\leq C_{1}|p|^{1-d_k^*(a)} \text{ and }\ \  |D_{ij}u_{R}^{*}-a_{i}\delta_{ij}|\leq C_{1}|p|^{-d_k^*(a)}
 \end{equation}
 for $r_{1}\leq|p|<\theta R/2$.  Here $\theta$ is the constant given in Lemma \ref{gradient}. 
 
 \end{lem}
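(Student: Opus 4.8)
The plan is to prove the asymptotic expansions not for $u_R^*$ directly but for its Legendre transform $u_R$, which solves the genuine $k$-Hessian equation $\sigma_k(\lambda(D^2u_R))=\binom dk$ in $\Omega_R$ — an equation that, unlike the quotient equation satisfied by $u_R^*$, possesses Pogorelov-type interior second-derivative estimates \emph{for strictly convex solutions}. Once the decay of $Du_R-x/a$ and $D^2u_R-\diag(1/a_i)$ is established, the Legendre identities $Du_R^*(p)=x$ and $D^2u_R^*(p)=(D^2u_R(x))^{-1}$ (with $p=Du_R(x)$), together with $|x|\asymp|p|$ on the relevant range from Lemma~\ref{gradient}, translate them into \eqref{clc}. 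The input we start from is the $C^0$ decay \eqref{fnfn'}, valid on $\bar B_{\theta R}\setminus B_{\tilde C}$.

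First I would run a rescaling argument. Fix $p_0$ in the stated range and let $x_0=Du_R^*(p_0)$, so that by Lemma~\ref{gradient} one has $|x_0|\le C|p_0|$ and, for $R$ large, $B_{|x_0|/2}(x_0)$ is compactly contained in $\bar B_{\theta R}\setminus B_{\tilde C}$. Write $\rho:=|x_0|$ and set $v(y):=\rho^{-2}u_R(x_0+\rho y)$ for $y\in B_{1/2}$; since $D^2_yv(y)=D^2_xu_R(x_0+\rho y)$, $v$ again solves $\sigma_k(\lambda(D^2v))=\binom dk$, and with $\tilde Q(y):=\rho^{-2}\bigl(\tfrac12\sum_i(x_{0,i}+\rho y_i)^2/a_i+c\bigr)$ — a quadratic polynomial with $D^2\tilde Q=\diag(1/a_i)$ and linear and constant coefficients bounded independently of $R$ — the bound \eqref{fnfn'} gives $\|v-\tilde Q\|_{C^0(B_{1/2})}\le C\rho^{-d^*_k(a)}$. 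Because $d^*_k(a)>2$ (automatic for $k\ge2$, and the standing hypothesis for $k=1$), this is small for $\rho$ large. As $v$ is convex and $C^0$-close to the \emph{fixed} uniformly convex polynomial $\tilde Q$, its sections are trapped between ellipsoids, so the Pogorelov-type interior estimate for strictly convex solutions of the $k$-Hessian equation — which for $2\le k\le d-1$ uses strict convexity essentially, and for $k=d$ is the classical Pogorelov estimate — yields $\|v\|_{C^2(B_{1/4})}\le C$ uniformly, and then Evans--Krylov (concavity of $\sigma_k^{1/k}$ on $\Gamma_k$) gives $\|v\|_{C^{2,\alpha}(B_{1/8})}\le C$ uniformly in $R$ and in $x_0$.

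Next I would linearize. Put $w:=u_R-\bigl(\tfrac12\sum_i x_i^2/a_i+c\bigr)$ and $\tilde w(y):=\rho^{-2}w(x_0+\rho y)=v-\tilde Q$. Since $u_R$ and the quadratic both solve $\sigma_k(\lambda(D^2\cdot))=\binom dk$, the fundamental theorem of calculus gives $a^{ij}(x)w_{ij}(x)=0$ with $a^{ij}(x):=\int_0^1\frac{\partial\sigma_k}{\partial M_{ij}}\bigl(sD^2u_R(x)+(1-s)\diag(1/a_i)\bigr)ds$; by Step~1 and interpolation $D^2v$, hence each interpolated matrix, is $C^0$-close to $\diag(1/a_i)\in\Gamma_k$, so the $a^{ij}$ are uniformly elliptic and uniformly $C^\alpha$ on $B_{1/8}$. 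Interior Schauder estimates for this linear equation, applied to $\tilde w$ on $B_{1/16}$, give $\|\tilde w\|_{C^2(B_{1/32})}\le C\|\tilde w\|_{C^0(B_{1/16})}\le C\rho^{-d^*_k(a)}$, and unscaling ($Dw(x_0)=\rho\,D_y\tilde w(0)$, $D^2w(x_0)=D^2_y\tilde w(0)$) yields
\[
|D_iu_R(x_0)-x_{0,i}/a_i|\le C|x_0|^{1-d^*_k(a)},\qquad |D_{ij}u_R(x_0)-\delta_{ij}/a_i|\le C|x_0|^{-d^*_k(a)}.
\]
Reading the first inequality with $p_0=Du_R(x_0)$ gives $|x_{0,i}-a_ip_{0,i}|\le C|x_0|^{1-d^*_k(a)}\le C_1|p_0|^{1-d^*_k(a)}$, which is the first part of \eqref{clc} since $D_iu_R^*(p_0)=x_{0,i}$; for the second, $D^2u_R^*(p_0)=(\diag(1/a_i)+E)^{-1}$ with $\|E\|\le C|x_0|^{-d^*_k(a)}$, so a Neumann series gives $D^2u_R^*(p_0)=\diag(a_i)+O(|p_0|^{-d^*_k(a)})$. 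Choosing $r_1,C_1$ from the constants above (and shrinking $\theta$ if needed so the rescaling ball stays inside $\bar B_{\theta R}\setminus B_{\tilde C}$ over the whole range) finishes the proof.

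The one genuinely nontrivial ingredient is the uniform interior $C^2$ estimate in Step~1 for $2\le k\le d-1$: interior second-derivative bounds for $\sigma_k$ equations are false in general (Pogorelov's counterexample), holding only for strictly convex solutions and then only with an estimate that depends quantitatively on the modulus of strict convexity. The point is that here this modulus is controlled, uniformly in $R$, purely by the $C^0$-closeness \eqref{fnfn'} of $u_R$ to the fixed uniformly convex quadratic $\tfrac12\sum_i x_i^2/a_i+c$; this is precisely why passing through the direct equation (rather than attacking the Hessian \emph{quotient} equation for $u_R^*$ in the style of Caffarelli--Li) is worthwhile, and why the hypothesis $d^*_k(a)>2$ — which forces the error in \eqref{fnfn'} to decay, hence to be small after rescaling — enters the argument.
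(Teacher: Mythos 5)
Your proposal is correct and follows essentially the same route as the paper's proof: pass via the Legendre transform to $u_R$ (which solves the genuine $k$-Hessian equation), rescale to unit scale around $x_0=Du_R^*(p_0)$, use the $C^0$ decay \eqref{fnfn'} to trap the rescaled function near a fixed uniformly convex quadratic, invoke the Pogorelov-type interior estimate from Chou--Wang, linearize the difference $w=u_R-\bigl(\tfrac12\sum_ix_i^2/a_i+c\bigr)$ into a uniformly elliptic equation with controlled coefficients, apply interior Schauder, unscale, and read off \eqref{clc} through the identities $Du_R^*=x$ and $D^2u_R^*=(D^2u_R)^{-1}$. The only cosmetic differences are that the paper works on ellipsoidal sublevel sets $D_s$ and $\Omega_{M,r}$ with a slightly different normalization (following Bao--Wang), and you make the Evans--Krylov step explicit before Schauder, which the paper leaves implicit.
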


 \begin{proof} We follow the proof of Lemma 3.3 in Bao-Wang \cite{BaoW}. For $s>0$, define
$$D_s=\left\{y\in\mathbb{R}^d: \frac{1}{2}\sum_{i=1}^d\frac{y_i^2}{a_i}<s\right\}.$$ 
For sufficiently large $R$, by Lemma \ref{gradient}, $u_R$ can be defined on  the annulus $\bar{B}_{\theta R}\setminus B_{\tilde{C}}$.
For $x\in \mathbb{R}^d$, $|x|\in[2\tilde{C},\frac{2}{3}\theta R]$, we let $r=(\max_i a_i)^{-1/2}|x|$ and 
\begin{equation*}
\eta_{r}(y):=\frac{16}{r^2}u_{R}\left(x+\frac{r}{4}y\right),~~~~y\in D_2.
\end{equation*}
It is easy to see that  $\eta_{r}$ satisfies
\begin{equation*}
\sigma_{k}(\lambda((D^{2}\eta_{r}))=\binom{d}{k}, ~~D^{2}\eta_{r}>0,~~~~\mbox{in }D_{2}.
\end{equation*}
Let
\begin{equation*}
w_{r}(y):=\eta_{r}(y)-\frac{8}{r^2}\sum\limits_{i=1}^{n}\frac{1}{a_i}\left(x_{i}+\frac{r}{4}y_{i}\right)^{2}-\frac{16}{r^2}c,\end{equation*}
and  $$\bar{\eta}_r(y)=w_r(y)+\frac{1}{2}\sum_{i=1}^d\frac{y^2_i}{a_i}.$$
It is clear that 
\begin{equation*}
\sigma_{k}(\lambda((D^{2}\bar{\eta}_{r}))=\binom{d}{k}.
\end{equation*}

For $M\leq 2$, let
\begin{equation*}
\Omega_{M,r}:=\{y\in D_2; \bar{\eta}_{r}(y)<M\}.
\end{equation*}
By (\ref{fnfn'}), we have, for $y\in D_2$,
$$\left|\bar{\eta}_r(y)-\frac{1}{2}\sum_{i=1}^d\frac{y^2_i}{a_i}\right|=|w_r(y)|\leq \frac{C}{r^2}\left(x+\frac{r}{4}y\right)^{2-d^*_k(a)}\leq Cr^{-d^*_k(a)}$$
and $|\eta_r|\leq C$. Thus, for sufficiently large $r_1$, if $r>r_1$, $\Omega_{1.5,r}\subset D_{1.6}$. Applying the interior gradient estimate in \cite{cw2001} to $\bar{\eta}_r$ in $D_2$, we have that
\begin{equation*}
\|D\bar{\eta}_{r}\|_{L^{\infty}(D_{1.6})}\leq C.
\end{equation*}
Applying the Pogorelov type second order derivative estimate in \cite{cw2001} to $\bar{\eta}_r$ in $\Omega_{1.5,r}$, we have that
\begin{equation*}\label{dn}
(\bar{\eta}(y)-1.5)^4|D^{2}\bar{\eta}_{r}(y)|\leq C, 
\end{equation*}
which implies 
$$\|D^2\bar{\eta}_r\|_{L^{\infty}(\Omega_{1.2,r})}\leq C.$$
Then $w_{r}$ satisfies that
\begin{equation*}
\sum_{i,j=1}^d a^{ij}(w_{r})_{ij}=0,~~~~\mbox{in }D_{2},
\end{equation*}
where $$a^{ij}=\int^1_0\sigma_k^{ij}(\lambda(A+sD^2w_r))ds.$$
Since $A+sD^2w_r=(1-s)A+sD^2\bar{\eta}$ and $(\sigma_k^{ij}(\lambda(A))),(\sigma_k^{ij}(\lambda(D^2\bar{\eta}_r)))$ has uniformly lower and upper bound, therefore $(a_{ij})$ also has uniformly lower and upper bound,
\begin{equation*}
\frac{I}{C}<(a_{ij})<CI,~~~~\mbox{in }D_{1}.
\end{equation*}
By the Schauder theory, we have 
\begin{equation*}
|D^{m}w_{r}(0)|\leq C\|w_{r}\|_{L^{\infty}(D_{1})}\leq Cr^{-d^*_k(a)},
\end{equation*}
which implies that
\begin{equation}\label{uRasy}
\left|D^{m}\left(u_{R}-\frac{1}{2}\sum\limits_{i=1}^{n}\frac{1}{a_{i}}x_{i}^{2}-c\right)\right|\leq C|x|^{2-d^*_k(a)-m},
\end{equation}
for  $(\max_i a_i)^{1/2}r_1\leq |x|\leq \frac{2}{3} \theta R$. 

By \eqref{uRasy}, for $m=1$, we have
$$p_i=D_iu_R=\frac{x_i}{a_i}+O(|x|^{1-d^*_k(a)}),$$ for sufficiently large $|x|$, which implies the first inequality of \eqref{clc}, if  $r_1$ is sufficiently large.  For $m=2$, since we known $(D^2u_R)=(D^2u^*_R)^{-1}$, then we have 
$$|(D^2u^*_R)^{-1}-A|\leq C|p|^{-d^*_k(a)}.$$ 
This gives the bound of $D^2u^*_R$, 
which implies the second inequality of \eqref{clc} for sufficiently large $r_1$.    

 \end{proof}

We are in the position to give the local $C^2$ estimate. 

\begin{lem}\label{localC2}
Suppose $R_0>0$ is a given sufficiently large constant.  For any $\frac{\theta R}{2}>R_0+10$, 
 let $u^{*}_{R}$ be the solution of (\ref{appprob}) and (\ref{appbound}), where $\theta$ is the constant given in Lemma \ref{gradient}. Then there exists a constant $C$ only depending on $R_0$ such that
 \begin{equation*}
 |D^2u_{R}^{*}|\leq C   \text{ on }  \bar{B}_{R_0}\setminus B_{1}
 \end{equation*}
 \end{lem}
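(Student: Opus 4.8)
The plan is to combine the interior second-derivative estimate just obtained for the Legendre transform $u_R$ (Lemma \ref{cl}, which gives $|D^2u_R^*|\le C$ on the annulus $r_1\le|p|<\theta R/2$) with a boundary estimate on $\partial B_1$ and on an auxiliary sphere $\partial B_{R_0+5}$, and then interpolate via the maximum principle exactly as in Step 3 of Lemma \ref{c2}. The key new ingredient is that the asymptotic region is now controlled \emph{uniformly in $R$}: on $\bar B_{R_0+5}\setminus B_{r_1}$ we may treat $\partial B_{R_0+5}$ as a genuine ``outer boundary'' on which $D^2u_R^*$ is already bounded by $C_1$ thanks to \eqref{clc}, and on $\partial B_1$ we have the Robin condition \eqref{appbound}, so the earlier annulus argument applies with the roles of $\partial B_R$ and $\partial B_1$ now played by $\partial B_{R_0+5}$ and $\partial B_1$.

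First I would repeat Step 1 of Lemma \ref{c2}: differentiate \eqref{appprob} twice along the angular field $T=T_{12}$, use the concavity of $F$ in \eqref{F} to get $\sum F^{ij}(T^2u_R^*)_{ij}\ge 0$ in $B_{R_0+5}\setminus\bar B_1$, and apply the maximum principle. On $\partial B_{R_0+5}$ the quantity $T^2u_R^*$ is bounded by $C_1$ by Lemma \ref{cl}; on $\partial B_1$, at an interior maximum of $T^2u_R^*$ on the boundary sphere one has $(T^2u_R^*)_n\le 0$, and using the commutation identity \eqref{comm} together with the Robin condition ($\psi_t\equiv 0$, $t=1$ here) gives $T^2u_R^*(p_0)\le 0$. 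Hence $|T^2u_R^*|\le C$ on $\bar B_{R_0+5}\setminus B_1$, giving all double-tangential derivatives on $\partial B_1$. Second, I would repeat Step 2: set $\phi_R^*=p\cdot Du_R^*-2u_R^*$, check $\sum F^{ij}(\phi_R^*)_{ij}=0$; on $\partial B_{R_0+5}$, $\phi_R^*$ is bounded by \eqref{clc}; on $\partial B_1$, $\phi_R^*=\partial u_R^*/\partial n-2u_R^*=-u_R^*\ge-\bar\omega_1>2c$ by the sign condition $c<-a_i/2$ (which is where $\bar c$ enters), so for $R_0$ large the maximum of $\phi_R^*$ is on $\partial B_1$, forcing $(\phi_R^*)_n\le 0$ there, hence $(u_R^*)_{nn}\le 2(u_R^*)_n\le C$ on $\partial B_1$. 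Convexity then bounds the full Hessian on $\partial B_1$.

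Third, I would close the argument exactly as in Step 3 of Lemma \ref{c2}: since $\sum F^{ij}(\Delta u_R^*)_{ij}\ge 0$, the maximum principle on $B_{R_0+5}\setminus\bar B_1$ shows $\max_{\bar B_{R_0+5}\setminus B_1}\Delta u_R^*$ is attained on $\partial B_{R_0+5}\cup\partial B_1$; on $\partial B_{R_0+5}$ it is controlled by Lemma \ref{cl} and on $\partial B_1$ by the previous two steps, and convexity converts the bound on $\Delta u_R^*$ into a bound on $|D^2u_R^*|$. Restricting to $\bar B_{R_0}\setminus B_1$ gives the claim, with $C$ depending only on $R_0$ (through $a$, $c$, $d$, $k$), since $C_1$ in Lemma \ref{cl} was $R$-independent.

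The main obstacle is making sure the ``outer'' comparison on $\partial B_{R_0+5}$ is legitimate: one needs $R_0+5$ to lie strictly inside the region $r_1\le|p|<\theta R/2$ where Lemma \ref{cl} applies, which is exactly the hypothesis $\theta R/2>R_0+10$, and one needs $R_0$ itself large enough that the sign comparison in Step 2 (the inequality $-\max_{\partial B_1}\bar\omega_1>2c$ versus the outer value $2c+O((R_0+5)^{2-d_k^*(a)})$) goes through — this is the only place the largeness of $R_0$ is genuinely used, just as in Lemma \ref{c2}. Everything else is a verbatim transcription of the closedness estimates of Section 4 with $\partial B_R$ replaced by $\partial B_{R_0+5}$.
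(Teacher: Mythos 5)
Your argument is correct and matches the paper's proof in all essentials: both reduce to a fixed annulus near $\partial B_1$, supply the outer-boundary $C^2$ data from Lemma \ref{cl}, and re-run Steps 1--3 of Lemma \ref{c2}, with the outer comparison for $\phi_R^*$ in Step 2 now coming from the asymptotics \eqref{fnfn'} and \eqref{clc} rather than the Dirichlet data on $\partial B_R$. The only cosmetic difference is your choice of $\partial B_{R_0+5}$ rather than the paper's $\partial B_{r_1}$ as the outer comparison sphere, which is immaterial since both lie in the range where Lemma \ref{cl} applies.
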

\begin{proof}
Using Lemma \ref{cl}, we only need the $C^2$ estimate on $\bar{B}_{r_1}\setminus B_1$. We use the same argument as in Lemma \ref{c2}. 
The proof of Step 1 and Step 3 are the same as in Lemma \ref{c2}, therefore, we only need to reprove Step 2. 

Let
 \begin{equation*}
 \phi_{R}^{*}:=p\cdot Du_{R}^{*}-2u_{R}^{*}.
 \end{equation*}
 Same argument  as Step 2 shows 
 \begin{equation*}
  \phi_{R}^{*}\leq\max\limits_{\partial B_{r_1}\cup\partial B_{1}}\phi_{R}^{*}~~~~\mbox{on }\bar{B}_{r_1}\setminus B_{1}.
 \end{equation*}
 By \eqref{fnfn'} and \eqref{clc}, we have that, on $\partial B_{r_1}$, 
 \begin{eqnarray}\label{chou1}
 \phi_{R}^{*}= 2c+O(|p|^{2-d^*_k(a)}),\nonumber
 \end{eqnarray}
 for sufficiently large $r_1$. 
 On $\partial B_{1}$, we still have 
 \begin{equation*}
\phi_{R}^{*}=\partial u^{*}_{R}/\partial n-2u^{*}_{R}=-u_{R}^{*}\geq-\bar{\omega}_1\geq-\max\limits_{\partial B_{1}}\bar{\omega}_1>2c,
 \end{equation*}
 by using of the condition $c< -\frac{a_i}{2}$ for $i=1,\cdots, d$. 
 Then for sufficiently large $r_1$, the maximum of $\phi_{R}^{*}$ should achieve on $\partial B_{1}$. It follows that $(\phi_{R}^{*})_{n}\leq0$, which implies, on $\partial B_1$,
 $$(u^*_{R})_{nn}\leq 2 (u^*_{R})_n\leq C. $$ 
\end{proof}

Combining local $C^0-C^2$ estimates with asymptotic behavior \eqref{fnfn'}, using Cantor's subsequence method, we obtain a unique solution $u^*$ to the dual problem  \eqref{fd5}-\eqref{fd7}. The uniqueness comes from the maximum principle. Suppose $u$ is the Legendre transform of $u^*$. Then, $u$ should satisfy $\sigma_k(\lambda(D^2u))=\binom{d}{k}$.  Let $D=Du^*(\mathbb{R}^d\setminus \bar{B}_1)$, which is an open set.  
Let $u^*_R=u^*|_{B_R}$ for sufficiently large $R$. Then using the same argument, we also can prove that Lemma \ref{gradient} holds for $u^*_R$, which is 
$$Du^*_R(\partial B_R)\subset \mathbb{R}^d\setminus \bar{B}_{\theta R}, \text{ and } Du^*_R(\partial B_1)\subset B_{\tilde{C}}.$$
Here $\theta, \tilde{C}$ are two constants not depending on $R$. Since $R$ can go to infinity, we have $\mathbb{R}^d\setminus \bar{B}_{\tilde{C}}\subset D$. Moreover, $Du^*(\partial B_1)$ is the interior boundary of $D$. Let $\Omega$ be the domain enclosed by $Du^*(\partial B_1)$. Then, we have $\partial\Omega=Du^*(\partial B_1)$ and $D=\mathbb{R}^d\setminus \bar{\Omega}$. By \eqref{fd6}, $u=0$ on $\partial\Omega$. Since $u$ is a strictly convex function, we get the strict convexity of $\Omega$. Again using Lemma \ref{gradient}, we further have the asymptotic behavior of $u$ by \eqref{fnfn'}. Moreover, by Lemma 3.3 in \cite{BaoW}, we can conclude that the decay rate for $u$ approaching the quadratic polynomial is \eqref{fd4}. At last, let's prove $\frac{\partial u}{\partial n}=1$ on $\partial \Omega$. Since 
on $\partial \Omega$, $1=|Du|=|u_n|$, then $u_n=\pm 1$. Since $u=0$ on $\partial\Omega$, if $u_n=-1$, one can see $u<0$ near $\partial\Omega$. By the asymptotic behavior \eqref{fd4'}, $u>0$ at infinity. Thus, $u$ can achieve negative minimum value at some point $P$ in $D$. Thus, we have $Du(P)=0$ which is a contradiction for $Du(D)=\mathbb{R}^d\setminus \bar{B}_1$.   Therefore, we get
$u_n=1$ on $\partial \Omega$.  Thus, we have proved $u$ satisfying \eqref{baozhang3}, \eqref{baozhang2} and \eqref{fd4'}.

We complete the proof of Theorem \ref{cases:10}, \ref{cases:20} and Remark \ref{sm}.

\section{Radially symmetric solutions}

We firstly give the 

\begin{proof}[proof of Theorem \ref{cases:80}]

Without loss of generality, we may assume that $b=0$.

For the existence and non-existence part, we can assume that $\Omega$ is a ball centered at the origin with radius $r_{0}>0$. We will construct the radially symmetric solution $u(x)=u(r)$ with $r=|x|$ of the problem (\ref{baozhang3})-(\ref{ab}).

A straightforward calculation shows that $h=u'/r$ satisfies the ODE
\begin{equation*}\label{ODE}
h^k+\frac{k}{d}rh'h^{k-1}=1,~~~~\forall~r>r_{0}.
\end{equation*} 
Then the solution of the above equation with boundary conditions $u(r_{0})=0$ and $u'(r_{0})=1$ is
\begin{equation}\label{app}
u(r)=\int_{r_0}^{r}s\left(1+Cs^{-d}\right)^{1/k}ds,~~~~\forall~r\geq r_{0},
\end{equation}
where $C=C(r_{0})=r_0^{d-k}-r_0^d$. Moreover, we have that 
\begin{equation}\label{app2}
u(r)=\frac{1}{2}r^{2}+\mu(r_0)+O(r^{2-d}),~~~~\mbox{as $r\rightarrow\infty$}, 
\end{equation}
where 
\begin{equation}\label{hailizi}
\mu(r_0)=-\frac{1}{2}r_{0}^{2}+\int_{r_0}^{\infty}s\left(\left(1+Cs^{-d}\right)^{1/k}-1\right)ds.
\end{equation}

The strictly convexity of $u$ gives $u''>0$, which implies that 
$$r^d-\frac{d-k}{k}C>0,~~~~\forall~r\geq r_0.$$
Thus, we only need $r^d_{0}-\frac{d-k}{k}C>0$, which gives $r_0>r_{2}:=\left(\frac{d-k}{d}\right)^{1/k}$. 

Now we discuss the range of $\mu(r_0)$ for $r_0>r_{2}$. It is obvious that $\mu\rightarrow -\infty$ as $r_0\rightarrow \infty$. The derivative of $\mu$ is 
$$\mu'(r_0)=-1+\frac{d}{k}r_0^{d-1}(r_{2}^{k}r_0^{-k}-1)\int^{\infty}_{r_0}s^{1-d}(1+Cs^{-d})^{1/k-1}ds.$$
For $k=d$,  it is clear that $r_2=0$ and $\mu'<0$. Thus, $$\mu(r_0)< \mu(0)=\int_{0}^{\infty}s((1+s^{-d})^{1/d}-1)ds.$$
For $1\leq k\leq d-1$, by the change of variable $s=r_0\tau$, $\mu$ and $\mu'$ can be written as
$$\mu(r_0)=r_0^2\left[-\frac{1}{2}+\int^{\infty}_1\tau\left((1+(r_0^{-k}-1)\tau^{-d})^{1/k}-1\right)d\tau\right],$$
and
$$\mu'(r_0)=-1+\frac{d}{k}r_0(r_2^k r_0^{-k}-1)\int^{\infty}_{1}\tau^{1-d}(1+(r_0^{-k}-1)\tau^{-d})^{1/k-1}d\tau.$$
Since $r_2<1$, we have that $\mu'(r_{0})<0$ for any $r_{0}>r_{2}$. Thus, $$\mu(r_0)< \mu(r_2)=\left(\frac{d-k}{d}\right)^{2/k}\left[-\frac{1}{2}+\int^{\infty}_1\tau\left(\left(1+\frac{k}{d-k}\tau^{-d}\right)^{1/k}-1\right)d\tau\right].$$

For the uniqueness part, if ($\Omega_{1}$, $u_{1}$) and ($\Omega_{2}$, $u_{2}$) are two pairs of domain and solution such that the problem (\ref{baozhang3})-(\ref{ab}) admits a unique strictly convex solution, then $u^{*}_{1}$ and $u_{2}^{*}$, the Legendre transform of $u_{1}$ and $u_{2}$, should be the same by the maximum principle. Thus, we obtain that $\Omega_{1}=\Omega_{2}$ and $u_{1}\equiv u_{2}$. 

\end{proof}

We secondly give the 
\begin{proof}[proof of Remark \ref{pangxie}]

Without loss of generality, we may assume that $b=0$. 

Suppose that $\Omega$ is a ball centered at the origin with radius $r_{0}>0$. We will construct the radially symmetric solution $u(x)=u(r)$ with $r=|x|$ of the problem (\ref{baozhang3})-(\ref{ab}). A straightforward calculation shows that $u$ is of the form (\ref{app}). Moreover, $u$ satisfies the asymptotic behavior (\ref{app2}) with $\mu(r_{0})$ defined as in (\ref{hailizi}).

Now we need to derive the range of $\mu(r_{0})$ for any $r_{0}>0$.  For $k=1$, we have  that
\begin{equation*}
\mu(r_{0})=-\frac{1}{2(d-2)}(r_{0}-1)^{2}+\frac{1}{2(d-2)}.
\end{equation*}
Then it is easy to see that 
\begin{equation*}
\mu(r_{0})\leq\hat{c}(1,d):=\max\limits_{[0,\infty)}\mu=\mu(1)=\frac{1}{2(d-2)}.
\end{equation*}
For $2\leq k\leq d-1$, since $\lim\limits_{r_{0}\rightarrow\infty}\mu(r_{0})=-\infty$, $\mu(0)=0$ and $\liminf\limits_{r_{0}\rightarrow0^{+}}\mu'(r_{0})>0$, there exists $r_{3}>0$ such that 
\begin{equation*}
\mu(r_{0})\leq \hat{c}(k,d):=\max\limits_{[0,\infty)}\mu=\mu(r_{3}).
\end{equation*}
Note that the solutions with different $k$ have the same asymptotic behavior. 
Then the monotonicity of $\hat{c}$ with respect to $k$ follows directly from the monotonicity of $h$ with respect to $k$. 
\end{proof}

\section{Some ridigity results}

We firstly give the 
\begin{proof}[proof of Theorem \ref{ridig''}]

We take $R>0$ sufficiently large such that $\bar{\Omega}\subset B_{R}$. The asymptotic behavior (\ref{fd4--}) implies that as $|x|\rightarrow\infty$,
\begin{equation}\label{chouba}
Du(x)=x+O(|x|^{-(\alpha+1)}),~~~~D^{2}u=I+O(|x|^{-(\alpha+2)}).
\end{equation}
It follows that as $|x|\rightarrow\infty$, 
\begin{equation}\label{choudan}
\sigma_{k}^{ij}:=\sigma_{k}^{ij}(\lambda(D^{2}u))=
\begin{cases}
\binom{d-1}{k-1}\delta_{ij}+O(|x|^{-(\alpha+2)}),&i=j,\\
O(|x|^{-(\alpha+2)}),&i\neq j.
\end{cases}
\end{equation}
for $i$, $j=1$, $\cdots$, $d$.

Then for sufficiently large $R>0$, we can compute the following boundary terms:
\begin{equation*}
\int_{\partial B_{R}}\sigma_{k}^{ij}u_{j}x_{i}=k\binom{d}{k}R|B_{R}|+O(R^{d-1-\alpha}),
\end{equation*}
\begin{equation*}
\int_{\partial B_{R}}(x\cdot Du)\sigma_{k}^{ij}u_{j}x_{i}=k\binom{d}{k}R^{3}|B_{R}|+O(R^{d+1-\alpha}),
\end{equation*}
\begin{equation*}
\int_{\partial B_{R}}\sigma_{k}^{ij}u_{i}u_{j}=k\binom{d}{k}R|B_{R}|+O(R^{d-1-\alpha}),
\end{equation*}
\begin{equation*}
\int_{\partial B_{R}}x_{l}u\frac{\partial \sigma_{k}^{ij}}{\partial x_{l}}u_{j}x_{i}=O(R^{d+1-\alpha}),
\end{equation*}
\begin{equation*}
\int_{\partial B_{R}}u=\frac{1}{2}dR|B_{R}|+cdR^{-1}|B_{R}|+O(R^{d-1-\alpha}),
\end{equation*}
\begin{equation*}
\int_{\partial B_{R}}u\sigma_{k}^{ij}u_{j}x_{i}=\frac{1}{2}k\binom{d}{k}R^{3}|B_{R}|+k\binom{d}{k}cR|B_{R}|+O(R^{d+1-\alpha}).
\end{equation*}

Integrating the equation (\ref{baozhang3}) on $B_{R}\setminus\bar{\Omega}$, we have that 
\begin{equation}\label{yaduo}
\int_{B_{R}\setminus\bar{\Omega}}\sigma_{k}(\lambda(D^{2}u))=\binom{d}{k}(|B_{R}|-|\Omega|).
\end{equation}
The left hand side can be computed as 
\begin{align*}
\int_{B_{R}\setminus\bar{\Omega}}\sigma_{k}(\lambda(D^{2}u))&=\frac{1}{k} \int_{B_{R}\setminus\bar{\Omega}}(\sigma_{k}^{ij}u_{j})_{i}=\frac{1}{kR}\int_{\partial B_{R}}\sigma_{k}^{ij}u_{j}x_{i}-\frac{1}{k}\int_{\partial\Omega}\sigma_{k}^{ij}u_{i}u_{j}\\
&=\binom{d}{k}|B_{R}|-\frac{1}{k}\int_{\partial\Omega}\sigma_{k}^{ij}u_{i}u_{j}+O(R^{d-2-\alpha}).
\end{align*}
Inserting the above equality into (\ref{yaduo}) and letting $R\rightarrow\infty$, we have that 
\begin{equation*}
|\Omega|=\frac{1}{k\binom{d}{k}}\int_{\partial\Omega}\sigma_{k}^{ij}u_{i}u_{j}=\frac{1}{k\binom{d}{k}}\int_{\partial\Omega}H_{k-1}.
\end{equation*}
Here $H_{k}$ denotes the $k$-th curvature of the level set of $u$. And it is well-known that 
\begin{equation*}
\sigma_{k}(\lambda(D^{2}u))=H_{k}|Du|^{k}+\frac{\sigma_{k}^{ij}u_{i}u_{lj}u_{l}}{|Du|^{2}},~~~~H_{k-1}=\frac{\sigma_{k}^{ij}u_{i}u_{j}}{|Du|^{k+1}}.
\end{equation*}

We first prove that $x\cdot Du\geq 0$ and $u\geq 0$. We define a function 
$$\varphi=x\cdot Du-2u.$$
It is obvious that $\varphi\rightarrow -2c > 0$ and $\varphi\geq 0$ on $\partial\Omega$. Moreover, one can prove 
$$\sigma_k^{ij}\varphi_{ij}=0.$$ Thus,  we get $\varphi\geq 0$ in $\mathbb{R}^d\setminus \Omega$. Therefore, we get
$$\frac{1}{r^2}\left(x\cdot Du-2u\right)=x\cdot D(\frac{u}{r^2})\geq 0,$$ which implies 
$$\frac{u}{r^2}\geq \left.\frac{u}{r^2}\right|_{\partial \Omega}=0.$$ Thus, we get $u\geq 0$ and $x\cdot Du\geq 2u\geq 0$.  

By the Newton's inequality and (\ref{baozhang3}), we have that 
\begin{equation}\label{shenxia}
\Delta u\geq d\left(\frac{\sigma_{k}(\lambda(D^{2}u))}{\binom{d}{k}}\right)^{1/k}=d.
\end{equation}

Multiplying inequality (\ref{shenxia}) with $u\geq0$ and integrating on $B_{R}\setminus\bar{\Omega}$, we have that 
\begin{equation}\label{weilai'}
\int_{B_{R}\setminus\bar{\Omega}}u\Delta u\geq d\int_{B_{R}\setminus\bar{\Omega}}u.
 \end{equation} 
The left hand side can be computed by integration by part as 
\begin{equation*}
\int_{B_{R}\setminus\bar{\Omega}}u\Delta u=-\int_{B_{R}\setminus\bar{\Omega}}|Du|^{2}+\frac{d}{2}R^{2}|B_{R}|+cd|B_{R}|+O(R^{d-\alpha}).
\end{equation*}
Inserting the above equality into (\ref{weilai'}), we have that
\begin{equation}\label{chenke}
\int_{B_{R}\setminus\bar{\Omega}}|Du|^{2}\leq -d\int_{B_{R}\setminus\bar{\Omega}}u+\frac{d}{2}R^{2}|B_{R}|+cd|B_{R}|+O(R^{d-\alpha}).
 \end{equation} 
 
 Multiplying inequality (\ref{shenxia}) with $(x\cdot Du)\geq0$ and integrating on $B_{R}\setminus\bar{\Omega}$, we have that 
 \begin{equation}\label{zhuoyue'}
\int_{B_{R}\setminus\bar{\Omega}}(x\cdot Du)\Delta u\geq d\int_{B_{R}\setminus\bar{\Omega}}(x\cdot Du).
\end{equation}
The right hand side can be computed by integration by part as 
\begin{equation}
d\int_{B_{R}\setminus\bar{\Omega}}(x\cdot Du)=-d^{2}\int_{B_{R}\setminus\bar{\Omega}}u+\frac{1}{2}d^{2}R^{2}|B_{R}|+cd^{2}|B_{R}|+O(R^{d-\alpha}).\label{youxiu'--}
\end{equation}
Note that 
\begin{equation*}
(x\cdot Du)\Delta u=(x_{l}u_{l}u_{i})_{i}-\frac{1}{2}(x_{l}|Du|^{2})_{l}+\frac{d-2}{2}|Du|^{2}.
\end{equation*}
Then the left hand side of (\ref{zhuoyue'}) can be computed as 
\begin{equation*}
 \int_{B_{R}\setminus\bar{\Omega}}(x\cdot Du)\Delta u=\frac{d-2}{2}\int_{B_{R}\setminus\bar{\Omega}}|Du|^{2}-\frac{d}{2}|\Omega|+\frac{1}{2}dR^{2}|B_{R}|+O(R^{d-\alpha}).\nonumber
\end{equation*}
Inserting the above equality and (\ref{youxiu'--}) into (\ref{zhuoyue'}), we have that 
\begin{equation}\label{xuhoubao'}
\frac{d-2}{2}\int_{B_{R}\setminus\bar{\Omega}}|Du|^{2}\geq-d^{2}\int_{B_{R}\setminus\bar{\Omega}}u+\frac{1}{2}d(d-1)R^{2}|B_{R}|+cd^{2}|B_{R}|+\frac{d}{2}|\Omega|+O(R^{d-\alpha}).
\end{equation}
Combining (\ref{chenke}) and (\ref{xuhoubao'}) together, we have that 
\begin{equation*}
\int_{B_{R}\setminus\bar{\Omega}}u\geq \frac{d}{2(d+2)}R^{2}|B_{R}|+c|B_{R}|+\frac{1}{d+2}|\Omega|+O(R^{d-\alpha}).
\end{equation*}

We also know that 
\begin{align*}
&\quad\int_{B_{R}\setminus\bar{\Omega}}u\sigma_{k+1}(\lambda(D^{2}u))\\
&=-\frac{1}{k+1}\int_{B_{R}\setminus\bar{\Omega}}\sigma_{k+1}^{ij}u_{i}u_{j}+\frac{1}{(k+1)R}\int_{\partial B_{R}}u\sigma_{k+1}^{ij}u_{j}x_{i}\\
&=-\frac{1}{k+1}\int_{B_{R}\setminus\bar{\Omega}}H_{k}|Du|^{k+2}+\frac{1}{2}\binom{d}{k+1}R^{2}|B_{R}|+c\binom{d}{k+1}|B_{R}|+O(R^{d-\alpha})\\
&=-\frac{k+2}{2(k+1)}\binom{d}{k}\int_{B_{R}\setminus\bar{\Omega}}|Du|^{2}+\frac{d}{2(k+1)}\binom{d}{k}R^{2}|B_{R}|+c\binom{d}{k+1}|B_{R}|\\
&\ \ \ \   -\frac{1}{2(k+1)}\int_{\partial\Omega}H_{k-1}+O(R^{d-\alpha})\\
&\geq \frac{d(k+2)}{2(k+1)}\binom{d}{k}\int_{B_{R}\setminus\bar{\Omega}}u-\frac{kd}{4(k+1)}\binom{d}{k}R^{2}|B_{R}|-\frac{k(d+2)}{2(k+1)}\binom{d}{k}c|B_{R}|\\
&\ \ \ \  -\frac{1}{2(k+1)}\int_{\partial\Omega}H_{k-1}+O(R^{d-\alpha}).
\end{align*}
It follows that 
\begin{align*}
0&\geq \int_{B_{R}\setminus\bar{\Omega}}u\left(\sigma_{k+1}(\lambda(D^{2}u))-\binom{d}{k+1}\right)\\
&\geq \frac{k(d+2)}{2(k+1)}\binom{d}{k}\int_{B_{R}\setminus\bar{\Omega}}u-\frac{kd}{4(k+1)}\binom{d}{k}R^{2}|B_{R}|-\frac{k(d+2)}{2(k+1)}\binom{d}{k}c|B_{R}|\\
&\quad\quad-\frac{1}{2(k+1)}\int_{\partial\Omega}H_{k-1}+O(R^{d-\alpha})\\
&\geq\frac{1}{2(k+1)}\left[k\binom{d}{k}|\Omega|-\int_{\partial\Omega}H_{k-1}\right]+O(R^{d-\alpha})=O(R^{d-\alpha}).
\end{align*}
Letting $R\rightarrow\infty$, we have that $\sigma_{k+1}(\lambda(D^{2}u))=\binom{d}{k+1}$, which implies that $D^{2}u=I$. By (\ref{baozhang2}), we can conclude that $u(x)=\frac{1}{2}|x|^{2}-\frac{1}{2}$. The proof of Theorem \ref{ridig''} is finished.

\end{proof}

We secondly give the 
 \begin{proof}[proof of Theorem \ref{ridig}]
 
 We take $R>0$ sufficiently large such that $\bar{\Omega}\subset B_{R}$. The asymptotic behavior (\ref{fd4--}) implies that as $|x|\rightarrow\infty$, (\ref{chouba}) and (\ref{choudan}) hold.

Integrating the equation (\ref{la}) on $B_{R}\setminus\bar{\Omega}$, we have that 
\begin{equation}\label{yaduo---}
\int_{B_{R}\setminus\bar{\Omega}}\Delta u=d(|B_{R}|-|\Omega|).
\end{equation}
By the divergence theorem, (\ref{chouba}) and (\ref{baozhang2}), the left hand side can be computed as 
\begin{equation*}
\int_{B_{R}\setminus\bar{\Omega}}\Delta u=\frac{1}{R}\int_{\partial B_{R}}(x\cdot Du)-|\partial\Omega|=d|B_{R}|-|\partial\Omega|+O(R^{d-2-\alpha}).
\end{equation*}
Inserting the above equality into (\ref{yaduo---}) and letting $R\rightarrow\infty$, we have that 
\begin{equation*}
|\partial\Omega|=d|\Omega|.
\end{equation*}
 
 Multiplying the equation (\ref{la}) with $x\cdot Du$ and integrating on $B_{R}\setminus\bar{\Omega}$, we have that 
\begin{equation}\label{zhuoyue}
\int_{B_{R}\setminus\bar{\Omega}}(x\cdot Du)\Delta u=d\int_{B_{R}\setminus\bar{\Omega}}(x\cdot Du).
\end{equation}
By (\ref{baozhang2}), (\ref{fd4--}) and integration by part, the right hand side of (\ref{zhuoyue}) can be computed as 
\begin{equation}\label{youxiu}
d\int_{B_{R}\setminus\bar{\Omega}}(x\cdot Du)=-d^{2}\int_{B_{R}\setminus\bar{\Omega}}u+\frac{1}{2}d^{2}R^{2}|B_{R}|+cd^{2}|B_{R}|+O(R^{d-\alpha}).
\end{equation}
Note that 
\begin{equation*}
(x\cdot Du)\Delta u=(x_{l}u_{l}u_{i})_{i}-\frac{1}{2}(x_{l}|Du|^{2})_{l}+\frac{d-2}{2}|Du|^{2}.
\end{equation*}
Integrating the above equality on $B_{R}\setminus\bar{\Omega}$, by the divergence theorem, (\ref{baozhang2}) and (\ref{chouba}), we have that 
\begin{equation*}
\int_{B_{R}\setminus\bar{\Omega}}(x\cdot Du)\Delta u=\frac{d-2}{2}\int_{B_{R}\setminus\bar{\Omega}}|Du|^{2}-\frac{d}{2}|\Omega|+\frac{d}{2}R^{2}|B_{R}|+O(R^{d-\alpha}).
\end{equation*}
Inserting the above equality and (\ref{youxiu}) into (\ref{zhuoyue}), we have that 
\begin{equation}\label{xuhoubao}
\frac{d-2}{2}\int_{B_{R}\setminus\bar{\Omega}}|Du|^{2}+d^{2}\int_{B_{R}\setminus\bar{\Omega}}u=\frac{d(d-1)}{2}R^{2}|B_{R}|+cd^{2}|B_{R}|+O(R^{d-\alpha})+\frac{d}{2}|\Omega|.
\end{equation}

Multiplying the equation (\ref{la}) with $u$ and integrating on $B_{R}\setminus\bar{\Omega}$, we have that 
\begin{equation}\label{weilai}
\int_{B_{R}\setminus\bar{\Omega}}u\Delta u=d\int_{B_{R}\setminus\bar{\Omega}}u.
\end{equation}
By integration by part, (\ref{baozhang2}), (\ref{fd4--}) and (\ref{chouba}), we have that 
\begin{equation*}
\int_{B_{R}\setminus\bar{\Omega}}u\Delta u=-\int_{B_{R}\setminus\bar{\Omega}}|Du|^{2}+\frac{d}{2}R^{2}|B_{R}|+cd|B_{R}|+O(R^{d-\alpha}).
\end{equation*}
Inserting the above equality into (\ref{weilai}), we have that 
\begin{equation}\label{zhaohao}
\int_{B_{R}\setminus\bar{\Omega}}|Du|^{2}+d\int_{B_{R}\setminus\bar{\Omega}}u=\frac{d}{2}R^{2}|B_{R}|+cd|B_{R}|+O(R^{d-\alpha}).
\end{equation}

Solving from (\ref{xuhoubao}) and (\ref{zhaohao}), we have that 
\begin{equation*}
\int_{B_{R}\setminus\bar{\Omega}}|Du|^{2}=\frac{d}{d+2}R^{2}|B_{R}|-\frac{d}{d+2}|\Omega|+O(R^{d-\alpha}),
\end{equation*}
and 
\begin{equation}\label{luogu}
\int_{B_{R}\setminus\bar{\Omega}}u=\frac{d}{2(d+2)}R^{2}|B_{R}|+c|B_{R}|+\frac{1}{d+2}|\Omega|+O(R^{d-\alpha}).
\end{equation}

Since $2\sigma_{2}(\lambda(D^{2}u))=(\sigma_{2}^{ij}u_{i})_{j}$, by integration by part, (\ref{baozhang2}) and (\ref{choudan}), we have that 
\begin{align*}
\int_{B_{R}\setminus\bar{\Omega}}u\sigma_{2}(\lambda(D^{2}u))&=-\frac{1}{2}\int_{B_{R}\setminus\bar{\Omega}}\sigma_{2}^{ij}u_{i}u_{j}+\frac{1}{2R}\int_{\partial B_{R}}u\sigma_{2}^{ij}u_{j}x_{i}\\
&=-\frac{1}{2}\int_{B_{R}\setminus\bar{\Omega}}\sigma_{2}^{ij}u_{i}u_{j}+\frac{d(d-1)}{4}R^{2}|B_{R}|+\frac{d(d-1)}{2}c|B_{R}|+O(R^{d-\alpha}).
\end{align*}
Then we have that 
$$H_{1}=\frac{\sigma_{2}^{ij}u_{i}u_{j}}{|Du|^{3}},~~~~\Delta u=H_{1}|Du|+\frac{u_{ij}u_{i}u_{j}}{|Du|^{2}}.$$
Then we have that 
\begin{align*}
&\int_{B_{R}\setminus\bar{\Omega}}u\sigma_{2}(\lambda(D^{2}u))\\
&=-\frac{1}{2}\int_{B_{R}\setminus\bar{\Omega}}H_{1}|Du|^{3}+\frac{d(d-1)}{4}R^{2}|B_{R}|+\frac{d(d-1)}{2}c|B_{R}|+O(R^{d-\alpha})\\
&=-\frac{3}{4}d\int_{B_{R}\setminus\bar{\Omega}}|Du|^{2}+\frac{d^{2}}{4}R^{2}|B_{R}|+\frac{d(d-1)}{2}c|B_{R}|-\frac{1}{4}|\partial\Omega|+O(R^{d-\alpha})\\
&=\frac{3}{4}d^{2}\int_{B_{R}\setminus\bar{\Omega}}u-\frac{d^{2}}{8}R^{2}|B_{R}|-\frac{d(d+2)}{4}c|B_{R}|-\frac{1}{4}|\partial\Omega|+O(R^{d-\alpha}).
\end{align*}
It follows from the above inequality and (\ref{luogu}) that 
\begin{align*}
0&\geq \int_{B_{R}\setminus\bar{\Omega}}u\left(\sigma_{2}(\lambda(D^{2}u))-\frac{d(d-1)}{2}\right)\\
&= \frac{d(d+2)}{4}\int_{B_{R}\setminus\bar{\Omega}}u-\frac{d^{2}}{8}R^{2}|B_{R}|-\frac{d(d+2)}{4}c|B_{R}|-\frac{1}{4}|\partial\Omega|+O(R^{d-\alpha})\\
&=\frac{1}{4}(d|\Omega|-|\partial\Omega|)+O(R^{d-\alpha})=O(R^{d-\alpha}).
\end{align*}
Letting $R\rightarrow\infty$, we have that $\sigma_{2}(\lambda(D^{2}u))=\frac{d(d-1)}{2}$, which implies that $D^{2}u=I$. By (\ref{baozhang2}), we can conclude that $u(x)=\frac{1}{2}|x|^{2}-\frac{1}{2}$. The proof of Theorem \ref{ridig} is finished.

\end{proof}

\section{Proof of Lemma \ref{general}}

In this section, we give the 
\begin{proof}[proof of Lemma \ref{general}]

Without loss of generality, we may assume that $A=\mbox{diag}\{a_{1},\cdots,$ $ a_{d}\}$, $b=0$ and $c=0$, that is
\begin{equation*}
w(x):=u(x)-\frac{1}{2}x\cdot Ax=o(1), ~~~~\mbox{as }|x|\rightarrow\infty.
\end{equation*}
We take $R_{0}>0$ such that $\bar{\Omega}\subset B_{R_{0}}$ and 
\begin{equation}\label{ximenzi}
|w(x)|\leq \frac{1}{2},~~~~\forall~|x|>R_{0}.
\end{equation}
Since $\lambda(A)\in\Gamma_{k}$, we can take some $\varepsilon_{0}>0$ such that 
\begin{equation*}
(a_{1},\cdots,a_{d})-2\varepsilon_{0}(1,\cdots,1)\in\Gamma_{k}.
\end{equation*}
Let $\varphi(y):=\frac{1}{2}\sum\limits_{i=1}^{d}(a_{i}-2\varepsilon_{0})y_{i}^{2}$. For any $s>0$, define
\begin{equation*}
D_{s}:=\left\{y\in\RR^{d}:\frac{1}{2}\sum\limits_{i=1}^{d}a_{i}y_{i}^{2}<s+\varphi(y)\right\}=\left\{y\in\RR^{d}:\varepsilon_{0}|y|^{2}<s\right\}.
\end{equation*}

Fix $x\in\RR^{d}$ with $|x|>2R_{0}$. Let $R=\sqrt{2\varepsilon_{0}}|x|$. Let 
\begin{equation*}
u_{R}(y):=\left(\frac{4}{R}\right)^{2}u\left(x+\frac{R}{4}y\right),~~~~\forall~y\in D_{2},
\end{equation*}
and 
\begin{equation*}
w_{R}(y):=\left(\frac{4}{R}\right)^{2}w\left(x+\frac{R}{4}y\right),~~~~\forall~y\in D_{2}.
\end{equation*}
It follows that 
\begin{equation*}
w_{R}(y)=u_{R}(y)-\left(\frac{1}{2}y\cdot Ay+\frac{8}{R^{2}}x\cdot Ax+\frac{4}{R}x\cdot Ay\right),~~~~\forall~y\in D_{2}.
\end{equation*}
Let 
\begin{equation*}
\bar{u}_{R}(y):=u_{R}(y)-\left(\frac{8}{R^{2}}x\cdot Ax+\frac{4}{R}x\cdot Ay\right),~~~~\forall~y\in D_{2}.
\end{equation*}
By (\ref{baozhang3}) and (\ref{ximenzi}), we have that 
\begin{equation*}
\sigma_{k}(\lambda(D^{2}\bar{u}_{R}))=\binom{d}{k}~~~~\mbox{in }D_{2},
\end{equation*}
and 
\begin{equation}\label{liangdong}
\left|\bar{u}_{R}(y)-\frac{1}{2}y\cdot Ay\right|=|w_{R}(y)|\leq \frac{8}{R^{2}},~~~~\forall~y\in D_{2}.\end{equation}
In particular, 
\begin{equation*}
\|\bar{u}_{R}\|_{L^{\infty}(D_{2})}\leq C(A,R_{0},\varepsilon_{0}).
\end{equation*}

For $M\leq 2$, define 
\begin{equation*}
\Omega_{M,R}:=\{y\in D_{2}:\bar{u}_{R}(y)<M+\varphi(y)\}.
\end{equation*}
In view of (\ref{liangdong}), we have that 
\begin{equation*}
\bar{\Omega}_{1.5,R}\subset D_{1.6}
\end{equation*}
for any $R>R_{1}:=\max\{R_{0},10\}$. Applying the interior gradient estimate, Theorem 3.2 in \cite{cw2001}, to $\bar{u}_{R}$, we have that 
\begin{equation*}
\|D\bar{u}_{R}\|\leq C.
\end{equation*}
Here and in the following, $C\geq1$ denotes some constant depending on $k$, $d$, $A$, $R_{0}$. Applying the interior second derivatives estimate, Theorem 1.5 in \cite{cw2001}, to $\bar{u_{R}}$, we have that 
\begin{equation*}
(\bar{u}_{R}(y)-\varphi(y)-1.5)^{4}|D^{2}\bar{u}_{R}(y)|\leq C,~~~~\forall~y\in\Omega_{1.5,R}.
\end{equation*}
It follows that 
\begin{equation*}
\|D^{2}\bar{u}_{R}\|_{L^{\infty}(\Omega_{1.2,R})}\leq C,
\end{equation*}
and 
\begin{equation*}
D^{2}u_{R}=D^{2}\bar{u}_{R}\leq CI~~~~\mbox{in }D_{1.1}.
\end{equation*}
By the above inequality, the concavity of $\sigma_{k}^{1/k}$ and the interior estimate, we have that 
\begin{equation*}
\|u_{R}\|_{C^{4,\alpha}(\bar{D}_{1})}\leq C,~~~~\forall~\alpha\in(0,1).
\end{equation*}

It is easy to see that 
\begin{equation}\label{lomeng}
\|w_{R}\|_{C^{4,\alpha}(\bar{D}_{1})}\leq C~~\mbox{and}~~A+D^{2}w_{R}\leq CI~~~~\mbox{in }D_{1}.
\end{equation}
Clearly, $w_{R}$ satisfies that 
\begin{equation*}
a_{ij}^{R}D_{ij}w_{R}=0~~~~\mbox{in }D_{1},
\end{equation*}
where $$a^{ij}:=\int^1_0\sigma_k^{ij}(\lambda(A+sD^2w_R))ds.$$By (\ref{lomeng}), $a_{ij}^{R}$ is uniformly elliptic and 
\begin{equation*}
\|a_{ij}\|_{C^{2,\alpha}(\bar{D}_{1})}\leq C.
\end{equation*}
By the Schauder estimate, we have that 
\begin{equation*}
|D^{2}w_{R}(0)|\leq C\|w_{R}\|_{L^{\infty}(D_{1})}\leq CR^{-2}.
\end{equation*}
It follows that for $|x|>R_{1}$, 
\begin{equation*}
|D^{2}w(x)|=|D^{2}w_{R}(0)|\leq C|x|^{-2},
\end{equation*}
which implies that $D^{2}u(x)\rightarrow A\in\tilde{A}_{k}$ as $|x|\rightarrow\infty$.

\end{proof}
\bigskip

{\bf Acknowledgement.} The authors wish to thank  Professor Jiguang Bao for many helpful discussions. Part of  the work was done while the first author was visiting Fudan University. He would like to thank Fudan University for their hospitality.


\newcommand{\noopsort}[1]{}

\end{document}